\documentclass[11pt,reqno]{amsart}
\usepackage{amsmath,amsfonts,amsthm,amsgen,mathrsfs,amssymb,cite}

\usepackage{graphicx}
\input xy
\xyoption{all}

\title{GCR and CCR Steinberg algebras}
\author{Lisa Orloff Clark\and Benjamin Steinberg \and Daniel W van Wyk}
\address{Lisa Orloff Clark\\ School of Mathematics and Statistics\\ Victoria University of Wellington \\ PO Box 600\\ Wellington 6140 \\ New Zealand}
\email{lisa.clark@vuw.ac.nz}
\address{Benjamin Steinberg\\
    Department of Mathematics\\
    City College of New York\\
    Convent Avenue at 138th Street\\
    New York, New York 10031\\
    USA}
\email{bsteinberg@ccny.cuny.edu}
\address{Daniel W van Wyk \\ Federal University of Santa Catarina \\ Department of Mathematics \\ Campus Universit\'ario Trindade CEP 88.040-900, Florian\'opolis-SC, Brazil}
\email{dwvanwyk79@gmail.com}
\thanks{The second author thanks the Fulbright Commission for sponsoring his stay at the Universidade Federal de Santa Catarina in Florianopolis, Brazil where much of this work was done}
\date{January 17, 2019}

\newtheorem{thm}{Theorem}[section]
\theoremstyle{plain}
\newtheorem*{thm*}{Theorem}
\newtheorem{prop}[thm]{Proposition}
\newtheorem*{prop*}{Proposition}
\newtheorem{lem}[thm]{Lemma}
\newtheorem{cor}[thm]{Corollary}
{\theoremstyle{definition}
\newtheorem{dfn}[thm]{Definition}}
{\theoremstyle{remark}
\newtheorem{ex}[thm]{Example}}
{\theoremstyle{remark}
\newtheorem*{ex*}{Example}}
{\theoremstyle{definition}
}
{\theoremstyle{remark}
\newtheorem{remark}[thm]{Remark}}
\numberwithin{equation}{section}

\newcommand{\prim}{\mathrm{Prim}}

\newcommand{\G}[0]{\mathscr{G}}
\newcommand{\orb}[0]{\mathcal{O}}

\keywords{CCR algebras, GCR algebras, groupoids, Steinberg algebras}

\subjclass[2010]{22A22,18B40,16D60,46L89}

\begin{document}

\begin{abstract}
Kaplansky introduced the notions of CCR and GCR $C^*$-algebras because they have a tractable representation theory.  Many years later, he introduced the notions of CCR and GCR rings.  In this paper we characterize when the algebra of an ample groupoid over a field is CCR and GCR. The results turn out to be exact analogues of the corresponding characterization of locally compact groupoids with CCR and GCR $C^*$-algebras. As a consequence, we classify the CCR and GCR Leavitt path algebras.
\end{abstract}

\maketitle

\section{Introduction}
It is well known that every $C^*$-algebra can be represented as a norm-closed self-adjoint  algebra of operators on a Hilbert space. The structure of commutative $C^*$-algebras is well understood due to a theorem of Gelfand, which shows that every commutative $C^*$-algebra is isomorphic to the continuous functions that vanish at infinity on a locally compact Hausdorff space. The locally compact Hausdorff space is
precisely the space of irreducible representations of the $C^*$-algebra, all of which are one-dimensional.


Guided by the structure of commutative $C^*$-algebras Kaplansky introduced the classes of CCR and GCR $C^*$-algebras~\cite{Ka51}. Kaplansky defines a $C^*$-algebra to be CCR if its image under any irreducible representation is precisely the compact operators, that is, the norm closure of  all the  finite rank operators. Hence every commutative $C^*$-algebra is CCR and, in the sense of their representation theory, the class of non-commutative CCR $C^*$-algebras is very similar to commutative $C^*$-algebras. A $C^*$-algebra is GCR if the image of every irreducible representation contains the compact operators. These are the two most accessible classes of $C^*$-algebras in term of their representation theories. In fact, there is evidence that strongly suggests a parametric description of the irreducible representations of non-GCR $C^*$-algebras is not possible (see for example the discussion at the end of Section 3.5 in~\cite{Arv76}).

Subsequent to Kaplansky's introduction, CCR and GCR characterizations have been found for $C^*$-algebras associated to directed graphs~\cite{Eph04} and  to transformation groups~\cite{Go73,Wil81}, which have been generalized to groupoids~\cite{Cl07,vW18,vW18a}. 
Here we characterize CCR and GCR Steinberg algebras associated to ample groupoids in an analogous manner to the characterizations for groupoid $C^*$-algebras. Given an ample groupoid, we show that, for the Steinberg algebra to be CCR (respectively, GCR), it is necessary and sufficient for the groupoid to have a $T_1$ (respectively, $T_0$) orbit space and for the isotropy group algebras to be CCR (respectively, GCR).  Let us first say a word about these notions in the algebraic setting.

In~\cite{K89} Kaplansky defines a ring as CCR if every primitive quotient is simple and has a minimal left ideal. He defines a ring to be GCR if every primitive quotient has a minimal left ideal.  In Section~\ref{sec:Rings} we develop properties of CCR and GCR rings and algebras. In particular, in the case of an algebra over an algebraically closed field such that the algebra's dimension is less than the cardinality of the field, we reformulate Kaplansky's definitions more concretely in terms of finite rank operators so that it mirrors the $C^*$-algebraic versions (see Proposition~\ref{p:algclosecase}).  This applies in particular to algebras of countable dimension over the complex numbers.   The reason that we must impose these kinds of restrictions is that an operator acting on a complex vector space $V$ may have empty spectrum if the dimension of $V$ is not countable. Hence Schur's lemma (see Lemma~\ref{l:schur.dim}) does not apply in this setting to give that the commutant consists of  just the scalars, as would happen in the $C^*$-algebraic context.

In Section~\ref{sec:Morita} we show that the properties of being CCR and GCR are Morita-invariant for rings. This section is not used again in this paper, and may be omitted.

Since our CCR and GCR characterizations in part require the isotropy groups to be CCR or GCR,  Section~\ref{sec:CCRgrouprings} considers certain groups that have CCR group rings. We use a variation of Clifford's theorem (Theorem~\ref{t:clifford}) to show that the group ring of a virtually abelian group of cardinality less than that of an algebraically closed field is CCR over this field (Corollary~\ref{c:CCR.group}). As a special case, if the field is the complex numbers, then it follows that every countable virtually abelian group has a CCR group ring (Corollary~\ref{c:CG.complex}). We end this section by stating a theorem of Roseblade (Theorem~\ref{t:Rosebade}), which says that the group ring of a virtually polycyclic group over an algebraic extension of a finite field is CCR.

In Section~\ref{sec:steinbergalgs} we establish the necessary background and notation on groupoids, their associated Steinberg algebras and the representation theory of Steinberg algebras. We also prove some preliminary results that are used later on. Proposition~\ref{p:locally.closed}  proves parts of the Ramsay-Mackey-Glimm dichotomy in the special case of ample groupoids without the standard assumption that the groupoid is Hausdorff. Proposition~\ref{p:discrete.groupoid} gives a CCR and GCR characterization for discrete transitive groupoids. We also review a sheaf theoretic approach to simple modules, introduced by the second author~\cite{St14}, and prove in Proposition~\ref{p:simple.sheaf} that the action of a Steinberg algebra on a simple module factors through a quotient corresponding to reduction to an orbit closure.

Sections~\ref{sec:CCRcharacterization} and~\ref{sec:GCRcharacterization} contain our main results. These are Theorem~\ref{t:CCRthm} and Theorem~\ref{t:GCRthm}, which gives CCR and GCR characterization, respectively,  for Steinberg algebras.

In Section~\ref{sec:higerRankgraphs} we apply our results to finitely aligned higher rank graph algebras. In Propositions~\ref{p:condM} and~\ref{p:condN} we give necessary and sufficient conditions for the orbit space of the boundary path groupoid to be $T_1$ and $T_0$, respectively. These propositions are analogous of, and generalize, Ephrem's CCR and GCR conditions (cf.~\cite{Eph04}). Then in  Corollary~\ref{c:ccrgraph} and~\ref{c:gcrgraph} we give CCR and GCR characterizations for finitely aligned higher rank graphs.  In particular, these results classify the Leavitt path algebras~\cite{LeavittBook} that are CCR and GCR.
Finally in Example~\ref{ex:graph} we give an example of a nice directed graph whose associated boundary path groupoid has non-discrete $T_1$ orbit space and hence the associated Leavitt path algebra is CCR and GCR.

\section{CCR and GCR rings} \label{sec:Rings}
Inspired by the theory of $C^*$-algebras, Kaplansky~\cite{K89} defined the notions of CCR and GCR rings.  In what follows we shall use left modules and hence need to speak of left primitive rings, etc.  Since groupoid algebras have an involution, these distinctions are irrelevant for us and so we mostly omit the term left.  We do not require rings to be unital and use the term `ideal' to mean two-sided ideal.

If $R$ is a ring and $M$ is an $R$-module, then $M$ is \emph{simple} if $RM\neq \{0\}$ and $M$ has no proper submodules except $M$ and $\{0\}$.  One has that $M$ is simple if and only if $M\neq 0$ and $Rm=M$ for all $0\neq m\in M$.   The ring $R$ is (left) \emph{primitive} if it has  a faithful simple (left) module. If $R$ is a unital ring, or more generally a ring with local units, then $R$ always has simple modules.
 If $M$ is a simple $R$-module, then by Schur's lemma $D=\mathrm{End}_R(M)$ is a division ring (or skew field) and $R$ acts on $M$ by $D$-linear maps.  By Jacobson's density theorem, it acts densely in the sense that if $m_1,\ldots, m_k\in M$ are $D$-linearly independent and $m_1',\ldots, m_k'\in M$, then there exists $r\in R$ with $rm_i=m_i'$ for $i=1,\ldots, k$.

Recall that  a ring $R$ is simple if $R^2\neq 0$ and $R$ has no proper non-zero ideals.  The \emph{socle} $\mathrm{soc}(R)$ of a ring $R$ is the left ideal generated by the set of minimal left ideals in $R$ (hence it is $0$ if $R$ has no minimal left ideals).  Note that $\mathrm{soc}(R)$ is a two-sided ideal and, if $R$ is primitive (or more generally semiprime), is generated as a right ideal by the set of minimal right ideals in $R$.  We remark that a minimal left ideal is the same thing as a simple submodule of the regular module $R$.

\begin{dfn}[Kaplansky]
A ring $R$ is (left) \emph{CCR} if every (left) primitive quotient ring of $R$ is simple and has a minimal left ideal.  We say that $R$ is (left) \emph{GCR} if every (left) primitive quotient ring of $R$ has  a minimal left ideal.
\end{dfn}

The following is an immediate consequence of the definitions.

\begin{prop}\label{p:quotients.ccr}
If the ring $R$ is CCR (respectively, GCR), then so is $R/I$ for any ideal $I$ of $R$.
\end{prop}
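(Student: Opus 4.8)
The plan is to exploit the correspondence theorem for ideals: the quotients of $R/I$ are, up to isomorphism, exactly the quotients $R/J$ of $R$ where $J$ is an ideal of $R$ containing $I$. Indeed, every ideal of $R/I$ has the form $J/I$ for a unique ideal $J$ of $R$ with $I\subseteq J$, and the third isomorphism theorem gives a ring isomorphism $(R/I)/(J/I)\cong R/J$. So the first step is simply to record this bijection and isomorphism.

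Next I would observe that ``being a primitive quotient'' is therefore transitive under taking further quotients: if $(R/I)/(J/I)$ is a primitive ring, then under the isomorphism above $R/J$ is a primitive ring, i.e.\ $R/J$ is a primitive quotient of $R$ (with $J$ automatically a primitive ideal of $R$). Thus every primitive quotient ring of $R/I$ is, up to isomorphism, a primitive quotient ring of $R$.

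Finally I would invoke the hypothesis on $R$. In the CCR case, $R$ CCR means every primitive quotient of $R$ is simple and has a minimal left ideal; applying this to $R/J$ and transporting back along the isomorphism $(R/I)/(J/I)\cong R/J$ shows every primitive quotient of $R/I$ is simple with a minimal left ideal, so $R/I$ is CCR. The GCR case is identical, dropping the word ``simple''.

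I do not expect any genuine obstacle here: the entire content is the lattice isomorphism between ideals of $R/I$ and ideals of $R$ containing $I$, together with the observation that both ``primitive'', ``simple'', and ``has a minimal left ideal'' are properties of the isomorphism class of the quotient ring and hence survive the identification $(R/I)/(J/I)\cong R/J$. The only point worth stating carefully is that $J$ is again an ideal of $R$ (which it is, being the preimage of an ideal under the quotient map $R\to R/I$), so that $R/J$ really is a legitimate quotient of $R$ to which the hypothesis applies.
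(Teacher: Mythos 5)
Your argument is correct and is exactly the reasoning the paper has in mind: the paper simply states this proposition as ``an immediate consequence of the definitions,'' and the content it suppresses is precisely your observation that, via the third isomorphism theorem, every (primitive) quotient of $R/I$ is a (primitive) quotient of $R$. Nothing further is needed.
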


A crucial point is that if a primitive ring $R$ has a minimal left ideal $L$, then $L$ is the unique faithful simple $R$-module up to isomorphism.

\begin{prop}\label{p:uniquesimple}
Let $R$ be a primitive ring with a minimal left ideal $L$.  Then every faithful simple $R$-module is isomorphic to $L$.
\end{prop}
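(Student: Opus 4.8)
The plan is to exhibit, for an arbitrary faithful simple $R$-module $M$, an explicit $R$-module isomorphism $L \to M$. The first step is to translate faithfulness into a usable statement about $L$. The annihilator $\operatorname{Ann}_R(M) = \{r \in R : rM = 0\}$ is a two-sided ideal of $R$, and it is $\{0\}$ precisely because $M$ is faithful. Since $L \neq \{0\}$, this forces $L \not\subseteq \operatorname{Ann}_R(M)$, so there is an element $m \in M$ with $Lm \neq \{0\}$.

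Next, observe that $Lm$ is an $R$-submodule of $M$, since $R(Lm) = (RL)m \subseteq Lm$; being nonzero, it equals $M$ by simplicity of $M$. Then I would consider the map $\varphi \colon L \to M$ given by $\varphi(\ell) = \ell m$. It is additive and satisfies $\varphi(r\ell) = (r\ell)m = r\varphi(\ell)$, so it is a homomorphism of left $R$-modules, and it is onto because its image is $Lm = M$. Its kernel is a submodule of $L$, and by the remark that a minimal left ideal is precisely a simple submodule of the regular module, $L$ has no submodules except $\{0\}$ and $L$. The kernel cannot be all of $L$ (that would give $Lm = \{0\}$, contrary to the choice of $m$), so it is $\{0\}$, and $\varphi$ is the desired isomorphism $L \cong M$.

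The argument is short, and the only step requiring a moment's thought is the first one: extracting $LM \neq \{0\}$ from faithfulness, which hinges on noticing that the seemingly one-sided set $\operatorname{Ann}_R(M)$ is in fact a two-sided ideal. After that, the proof uses only the two standard facts that a nonzero homomorphic image of a simple module is everything and that a nonzero homomorphism out of a simple module is injective; no use of the idempotent structure of $L$ or of $\operatorname{soc}(R)$ is needed, although one could instead deduce the result from the fact that $L$ generates the smallest nonzero two-sided ideal of the prime ring $R$.
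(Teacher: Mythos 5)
Your proof is correct and is essentially identical to the paper's: both locate $m\in M$ with $Lm\neq 0$ via faithfulness, note $Lm=M$ by simplicity, and check that $\ell\mapsto \ell m$ is a surjective homomorphism whose kernel must be zero by minimality of $L$. The extra remarks about $\operatorname{Ann}_R(M)$ being two-sided and the alternative route via $\mathrm{soc}(R)$ are fine but not needed.
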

\begin{proof}
Let $M$ be a faithful simple $R$-module.  Then $L$ does not annihilate $M$ and so there exists $m\in M$ with $Lm\neq 0$.  Since $M$ is simple, we must have $Lm=M$.  Also the mapping $\psi\colon L\to M$ defined by $\psi(r)=rm$ is a surjective $R$-module homomorphism and hence an isomorphism as $L$ is minimal.
\end{proof}

A consequence of Proposition~\ref{p:uniquesimple} and the definitions is the following.

\begin{prop}\label{p:gcr.spectrum}
Let $R$ be a GCR ring.  Then two simple modules are isomorphic if and only if they have the same annihilator.  That is, the primitive ideal spectrum of $R$ is in bijection with the set of isomorphism classes of simple $R$-modules.
\end{prop}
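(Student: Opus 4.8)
The plan is to reduce everything to the primitive quotient by the common annihilator and then invoke Proposition~\ref{p:uniquesimple}. The forward implication is trivial, since isomorphic modules have equal annihilators, so there is nothing to do there.

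For the converse, let $M$ and $N$ be simple $R$-modules with $\mathrm{Ann}(M)=P=\mathrm{Ann}(N)$. First I would check that $M$ and $N$ are faithful simple $R/P$-modules: since $PM=0$, the module $M$ is an $R/P$-module, its $R$-submodules and $R/P$-submodules coincide, and $(R/P)M=RM=M\neq 0$, so $M$ is simple over $R/P$; it is faithful because $P=\mathrm{Ann}_R(M)$. The same applies to $N$. In particular $P$ is a primitive ideal and $R/P$ is a primitive ring.

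Now $R/P$ is a primitive quotient ring of $R$, so by the definition of GCR (or Proposition~\ref{p:quotients.ccr}) it has a minimal left ideal $L$. Proposition~\ref{p:uniquesimple}, applied to the primitive ring $R/P$, gives that every faithful simple $R/P$-module is isomorphic to $L$; hence $M\cong L\cong N$ as $R/P$-modules, and therefore also as $R$-modules after restricting scalars along $R\twoheadrightarrow R/P$. This establishes the stated equivalence.

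The bijection statement then follows formally: the assignment $[M]\mapsto\mathrm{Ann}(M)$ from isomorphism classes of simple $R$-modules to the primitive ideal spectrum is well defined by the forward implication, injective by the equivalence just proved, and surjective because every primitive ideal is by definition the annihilator of a simple module. I do not foresee any genuine obstacle here; the only point requiring a moment's care is the verification that dividing out the annihilator of a simple module preserves simplicity and yields a faithful module over the quotient.
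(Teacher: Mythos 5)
Your argument is correct and is precisely the one the paper intends: the paper gives no written proof, stating only that the result is ``a consequence of Proposition~\ref{p:uniquesimple} and the definitions,'' and your reduction to the primitive quotient $R/P$, followed by the GCR hypothesis to produce a minimal left ideal and Proposition~\ref{p:uniquesimple} to identify both modules with it, is exactly that consequence spelled out. The only cosmetic point is that the parenthetical appeal to Proposition~\ref{p:quotients.ccr} is unnecessary --- the definition of GCR applied to the primitive quotient $R/P$ is all you need.
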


By~\cite[Corollary~5.33]{Bre14} a primitive ring $R$, with faithful simple module $M$, contains a minimal left ideal if and only if it contains $r\in R$ that acts on $M$ as a non-zero finite rank operator (with respect to the $D$-vector space structure where $D=\mathrm{End}_R(M)$).  Moreover, $\mathrm{soc}(R)$ consists of the set of all elements of $R$ that act on $M$ as finite rank operators. In~\cite[Theorem~8.1]{CohnBA3}, it is shown that a ring $R$ is simple with a minimal left ideal if and only if it is isomorphic to a dense ring of finite rank operators over a division ring, which can moreover be taken to be the endomorphism ring of one of its minimal left ideals (viewed as its unique, up-to-isomorphism, faithful simple left $R$-module).

These observations allow us reformulate the notions of CCR and GCR rings in a concrete fashion that more closely mirrors the operator algebra setting. In the operator setting Kaplansky defines a $C^*$-algebra to be CCR if its image under any irreducible representation is precisely the compact operators~\cite{Ka51}, that is, the norm closure of the span of all finite rank operators. Kaplansky defines a C$^*$-algebra to be GCR if it has a composition series of closed two-sided ideals such that quotients of successive terms are CCR. However, in~\cite[Theorem 1]{Gl73} Glimm shows this GCR definition is equivalent to having the image of every irreducible representation contain a compact operator.\footnote{
	In fact, in the case of $C^*$-algebras if the image of an irreducible representation contains a non-zero compact operator then it contains all compact operators.}

\begin{prop}\label{p:ccrgcrnice}
Let $R$ be a ring.
\begin{enumerate}
  \item The ring $R$ is CCR if and only if its action on any simple $R$-module $M$ is by finite rank operators over the division ring $\mathrm{End}_R(M)$.
  \item The ring $R$ is GCR if and only if its action on any simple $R$-module $M$ contains a non-zero finite rank operator over the division ring $\mathrm{End}_R(M)$.
\end{enumerate}
\end{prop}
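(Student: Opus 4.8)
The plan is to deduce Proposition~\ref{p:ccrgcrnice} directly from the definitions of CCR and GCR rings together with the structural facts recalled just before the statement, namely: (a) by Proposition~\ref{p:uniquesimple}, a primitive ring with a minimal left ideal has a unique faithful simple module up to isomorphism; (b) by the cited results of Bre\v{s}ar and Cohn, a primitive ring $R$ with faithful simple module $M$ has a minimal left ideal if and only if some $r\in R$ acts on $M$ as a non-zero finite rank operator over $D=\mathrm{End}_R(M)$, and $R$ is moreover simple (in addition to having a minimal left ideal) exactly when $R$ is a dense ring of finite rank operators over such a division ring, i.e.\ exactly when \emph{every} element of $R$ acts as a finite rank operator on $M$.

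The first step is a reduction. For any simple $R$-module $M$, let $P = \mathrm{ann}_R(M)$; then $R/P$ is a primitive ring and $M$ is a faithful simple $R/P$-module, with $\mathrm{End}_{R/P}(M) = \mathrm{End}_R(M) = D$ and the $R$-action and $R/P$-action on $M$ having the same image in $\mathrm{End}_D(M)$. Thus "the action of $R$ on $M$ is by finite rank operators" is equivalent to the same statement for $R/P$, and similarly for "contains a non-zero finite rank operator." So both assertions reduce to a statement quantified over the primitive quotients of $R$ and their (unique, when relevant) faithful simple modules, which is precisely the form in which CCR and GCR are defined.

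For part~(2): if $R$ is GCR, then every primitive quotient $R/P$ has a minimal left ideal, so by fact~(b) applied to $R/P$ and a faithful simple $R/P$-module, $R/P$ contains an element acting as a non-zero finite rank operator; pulling back, $R$ acts on $M$ with a non-zero finite rank operator in its image. Conversely, if for every simple $M$ the action contains a non-zero finite rank operator, then for every primitive quotient $R/P$ (taking $M$ with $\mathrm{ann}_R(M)=P$, which exists since $R/P$ is primitive) fact~(b) gives that $R/P$ has a minimal left ideal, so $R$ is GCR. For part~(1): if $R$ is CCR, every primitive quotient $R/P$ is simple with a minimal left ideal, hence by the Cohn result is a dense ring of finite rank operators on its unique faithful simple module, so it acts by finite rank operators on it; by Proposition~\ref{p:uniquesimple} \emph{every} simple $M$ with $\mathrm{ann}_R(M)=P$ is isomorphic to that module, so $R$ acts on $M$ by finite rank operators. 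Conversely, if $R$ acts by finite rank operators on every simple module, then for each primitive quotient $R/P$ its image in $\mathrm{End}_D(M)$ is a dense ring of finite rank operators; density forces this image to be all finite rank operators, which is a simple ring with a minimal left ideal, and since $M$ is faithful over $R/P$ the map $R/P \to \mathrm{End}_D(M)$ is injective, so $R/P$ itself is simple with a minimal left ideal. Hence $R$ is CCR.

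The only real subtlety — and the step I would be most careful about — is the converse direction of part~(1): passing from "$R/P$ acts by finite rank operators on $M$" to "$R/P$ is \emph{simple}." This is exactly where Jacobson density is essential: by the density theorem the image of $R/P$ in $\mathrm{End}_D(M)$ is a dense ring of finite rank operators, and a dense ring of finite rank operators over a division ring that contains \emph{all} finite rank operators; since a subring of $\mathrm{End}_D(M)$ consisting of finite rank operators and acting densely must in fact equal the ring of all finite rank operators (density lets one match any finite configuration, and a finite rank operator is determined by its action on a finite-dimensional subspace), the image is the full simple ring of finite rank operators, and faithfulness of $M$ over $R/P$ identifies $R/P$ with it. I would cite the Cohn characterization (\cite[Theorem~8.1]{CohnBA3}) here rather than reprove it, which makes the argument short. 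The GCR direction has no such subtlety, since one only needs the existence of a single finite rank element, handled directly by the Bre\v{s}ar reference.
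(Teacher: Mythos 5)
Your overall strategy coincides with the paper's: reduce to the primitive quotient $\overline{R}=R/\mathrm{ann}(M)$, use the Bre\v{s}ar criterion (minimal left ideal $\Leftrightarrow$ a non-zero finite rank element) for part~(2), and use Cohn's characterization of simple rings with a minimal left ideal as dense rings of finite rank operators for the converse of part~(1). The only structural difference is in the forward direction of part~(1): the paper argues that $\mathrm{soc}(\overline{R})$ is exactly the set of finite-rank elements and is a non-zero ideal, so simplicity of $\overline{R}$ forces $\overline{R}=\mathrm{soc}(\overline{R})$; you instead invoke Cohn's structure theorem together with Proposition~\ref{p:uniquesimple} to transfer the finite-rank statement from the minimal left ideal to the given module $M$. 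Both routes are valid and rest on the same cited background.

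One assertion in your last paragraph is false, although it turns out not to be needed: a dense subring of $\mathrm{End}_D(M)$ consisting of finite rank operators need \emph{not} equal the ring of all finite rank operators. For instance, if $V$ has basis $(e_i)_{i\in I}$ with $I$ infinite and $W\subsetneq V^*$ is the span of the coordinate functionals $e_i^*$, then the ring $V\otimes W$ (acting by $x\mapsto \sum_j f_j(x)v_j$ with $f_j\in W$) is a proper dense subring of the finite rank operators. Your parenthetical justification ("a finite rank operator is determined by its action on a finite-dimensional subspace") is also incorrect: such an operator has a kernel of finite codimension, but its restriction to a finite-dimensional subspace does not determine it. Fortunately the conclusion you actually need --- that $R/P$ is simple with a minimal left ideal --- follows from \cite[Theorem~8.1]{CohnBA3} exactly as you cite it: \emph{any} dense ring of finite rank operators over a division ring is simple with a minimal left ideal, whether or not it is the full ring of such operators. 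This is precisely how the paper closes that direction, so your proof stands once the detour through "the image is all finite rank operators" is deleted.
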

\begin{proof}
If $M$ is a simple $R$-module, then $\overline{R}=R/\mathrm{ann}(M)$ is primitive and  $M$ is a faithful $\overline{R}$-module.  Thus $\overline{R}$ has an element acting as a non-zero finite rank operator if and only if it has a minimal left ideal by~\cite[Corollary~5.33]{Bre14}. This proves the second statement.

Suppose now that $R$ is CCR and let us retain the above notation. Then $\mathrm{soc}(\overline{R})$ is the collection of elements of $\overline{R}$ acting as finite rank operators (see~\cite[Theorem~5.30]{Bre14}), and it is a non-zero ideal (as $\overline{R}$ has a minimal left ideal).  So if $\overline{R}$ is simple, then $\overline{R}=\mathrm{soc}(\overline{R})$ and hence $R$ acts on $M$ by finite rank operators. On the other hand, since any dense ring of finite operators over a division ring is simple with a minimal left ideal by~\cite[Theorem~8.1]{CohnBA3}, we obtain the converse to the first statement.
\end{proof}

If $\Bbbk$ is a field, then a representation of a $\Bbbk$-algebra $A$ is a $\Bbbk$-algebra homomorphism $\rho\colon A\to \mathrm{End}_{\Bbbk}(V)$ for some $\Bbbk$-vector space $V$.   The representation is \emph{irreducible} if $V$ is a simple $A$-module with respect to the natural module structure $av=\rho(a)V$.

Schur's lemma has a well-known strengthening over algebraically closed fields for algebras of sufficiently small dimension.

\begin{lem}\label{l:schur.dim}
Let $A$ be an algebra over an algebraically closed field $\Bbbk$  and suppose that the dimension of $A$ is less than the cardinality of $\Bbbk$ and let $V$ be a simple $A$-module. Then $\mathrm{End}_A(V)=\Bbbk$.
\end{lem}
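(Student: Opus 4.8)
The plan is to combine a classical dimension-counting argument with the hypothesis that $\dim_{\Bbbk} A$ is strictly less than $|\Bbbk|$, in order to force every element of the division ring $D = \mathrm{End}_A(V)$ to be algebraic over $\Bbbk$; since $\Bbbk$ is algebraically closed and central in $D$, this will give $D = \Bbbk$. First I would recall that, by Schur's lemma, $D = \mathrm{End}_A(V)$ is a division ring, and that $\Bbbk$ sits inside the center of $D$ via scalars. Fix $0 \neq v \in V$; since $V$ is simple, $Av = V$, so the $\Bbbk$-linear surjection $A \to V$, $a \mapsto av$, shows $\dim_{\Bbbk} V \leq \dim_{\Bbbk} A < |\Bbbk|$. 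Now take any $\varphi \in D$; I want to show $\varphi \in \Bbbk$. Since $\varphi$ commutes with the $A$-action, the subring $\Bbbk[\varphi] \subseteq D$ generated by $\varphi$ is a commutative domain, hence an integral domain containing $\Bbbk$.

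The key step is to show $\varphi$ is algebraic over $\Bbbk$. Suppose not; then $\Bbbk[\varphi] \cong \Bbbk[t]$ is a polynomial ring, and $D$, being a division ring, contains the field of fractions $\Bbbk(t)$. But $\Bbbk(t)$ has $\Bbbk$-dimension at least $|\Bbbk|$: the uncountable (more precisely, size-$|\Bbbk|$) family $\{1/(t - \lambda) : \lambda \in \Bbbk\}$ is $\Bbbk$-linearly independent in $\Bbbk(t)$, by the usual partial-fractions argument. On the other hand, $D$ embeds $\Bbbk$-linearly into $\mathrm{End}_{\Bbbk}(V)$ by its action on $V$; picking $0 \neq v \in V$ again and using that $\varphi \mapsto \varphi(v)$ is injective on $\Bbbk(t)$ (an injective ring map from a field is injective), we get $\dim_{\Bbbk} V \geq |\Bbbk|$, contradicting $\dim_{\Bbbk} V < |\Bbbk|$ from the previous paragraph. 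Hence $\varphi$ is algebraic over $\Bbbk$.

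It remains to conclude. Since $\varphi$ is algebraic over $\Bbbk$, the domain $\Bbbk[\varphi]$ is a finite-dimensional commutative $\Bbbk$-algebra that is a domain, hence a field, so $\Bbbk[\varphi]$ is a finite algebraic field extension of $\Bbbk$; as $\Bbbk$ is algebraically closed, $\Bbbk[\varphi] = \Bbbk$, i.e.\ $\varphi \in \Bbbk$. Since $\varphi \in D$ was arbitrary, $D = \Bbbk$. The main obstacle — and the only place the cardinality hypothesis is used — is the linear-independence estimate $\dim_{\Bbbk} \Bbbk(t) \geq |\Bbbk|$ and its comparison with $\dim_{\Bbbk} V$; everything else is routine. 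I would present the independence of $\{(t-\lambda)^{-1}\}_{\lambda \in \Bbbk}$ carefully, since this is exactly the ``empty spectrum'' phenomenon alluded to in the introduction that can fail without the dimension bound.
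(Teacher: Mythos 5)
Your proof is correct and follows essentially the same route as the paper's: reduce to showing every $\varphi\in D=\mathrm{End}_A(V)$ is algebraic over $\Bbbk$, bound $\dim_\Bbbk D$ by $\dim_\Bbbk V<|\Bbbk|$ via $\varphi\mapsto\varphi(v)$, and use the size-$|\Bbbk|$ family $\{(\varphi-\lambda)^{-1}\}_{\lambda\in\Bbbk}$ against that dimension bound (the paper phrases this as forced linear dependence plus clearing denominators, you phrase it contrapositively via independence of $\{(t-\lambda)^{-1}\}$ in $\Bbbk(t)$ -- the same argument). The only nit is your parenthetical justification of injectivity of $\varphi\mapsto\varphi(v)$: the target $V$ is not a ring, so the right reason is that $\varphi(av)=a\varphi(v)$ and $Av=V$ force $\varphi=0$ whenever $\varphi(v)=0$.
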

\begin{proof}
Let $D=\mathrm{End}_A(V)$.  Note that $D$ is a division algebra over $\Bbbk$ by Schur's lemma.  Let us show that its dimension is less than the cardinality of $\Bbbk$.
First note that if $v\neq 0$ belongs to $V$, then $Av=V$ and so $V$ is a quotient of $A$ as a vector space and hence has dimension less than the cardinality of $\Bbbk$.  Also note that $f\mapsto f(v)$ is an injection from $D$ to $V$ as $f(av)=af(v)$ for all $a\in A$ and $Av=V$.  Thus $D$ has dimension less than the cardinality of $\Bbbk$.

So it suffices to show that if $D$ is a division algebra over an algebraically closed field $\Bbbk$ of dimension less than the cardinality of $\Bbbk$, then $D=\Bbbk$.  Suppose that $d\in D\setminus \Bbbk$.  Then $\Bbbk(d)\subseteq D$ is an extension field of $\Bbbk$ of dimension strictly less than the cardinality of $\Bbbk$.   Consider the family of elements $X=\{(d-\lambda)^{-1}\mid \lambda\in \Bbbk\}\subseteq \Bbbk(d)$.  Then $X$  has the same cardinality as $\Bbbk$.  Thus $X$ is linearly dependent over $\Bbbk$ and so there are $\mu_i,\lambda_i\in \Bbbk$, not all $\mu_i$  zero, such that
\[\sum_{i=1}^n \frac{\mu_i}{d-\lambda_i}=0.\]  Clearing denominators, we get a non-zero polynomial $p(x)\in \Bbbk [x]$ with $p(d)=0$.  Thus $\Bbbk(d)$ is an algebraic extension of the algebraically closed field $\Bbbk$ and so $\Bbbk(d)=\Bbbk$, contradicting $d\notin \Bbbk$.
\end{proof}

The following is now an immediate consequence of the lemma.

\begin{prop}\label{p:algclosecase}
Let $A$ be an algebra over an algebraically closed field $\Bbbk$ of dimension less than the cardinality of $\Bbbk$.
\begin{enumerate}
  \item $A$ is CCR if and only if all its irreducible representations over $\Bbbk$  are by finite rank operators.
  \item $A$ is GCR if and only if all its irreducible representations over $\Bbbk$  contain a non-zero finite rank operator in the image.
\end{enumerate}
\end{prop}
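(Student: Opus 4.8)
The plan is to deduce this directly from Proposition~\ref{p:ccrgcrnice} together with Lemma~\ref{l:schur.dim}. First I would unwind the dictionary between representations and modules: an irreducible representation $\rho\colon A\to\mathrm{End}_{\Bbbk}(V)$ is exactly the datum of a simple $A$-module $V$, with $A$ acting through $\rho$, and the image $\rho(A)$ consists of finite rank operators (respectively, contains a non-zero finite rank operator) if and only if $A$ acts on the simple module $V$ by finite rank operators (respectively, contains an element acting as a non-zero finite rank operator). So the only discrepancy between Proposition~\ref{p:ccrgcrnice} and the statement to be proved is that the former measures finiteness of rank relative to the division ring $D=\mathrm{End}_A(V)$, whereas here we want finiteness of rank of a $\Bbbk$-linear operator.

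Next I would invoke Lemma~\ref{l:schur.dim}: since $\dim_{\Bbbk}A$ is less than the cardinality of $\Bbbk$, we have $D=\mathrm{End}_A(V)=\Bbbk$ for every simple $A$-module $V$. Consequently the $D$-vector space structure on $V$ coincides with its underlying $\Bbbk$-vector space structure, so ``finite rank over $D$'' and ``finite rank over $\Bbbk$'' literally mean the same thing for any $\Bbbk$-linear endomorphism of $V$. Assembling the pieces: $A$ is CCR iff, by Proposition~\ref{p:ccrgcrnice}(1), it acts on every simple module $V$ by finite rank operators over $\mathrm{End}_A(V)=\Bbbk$, i.e. iff every irreducible representation of $A$ over $\Bbbk$ has image consisting of finite rank operators; and $A$ is GCR iff, by Proposition~\ref{p:ccrgcrnice}(2), on every simple module $V$ some element of $A$ acts as a non-zero finite rank $\Bbbk$-linear operator, i.e. iff the image of every irreducible representation of $A$ over $\Bbbk$ contains a non-zero finite rank operator.

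There is no substantive obstacle here — the paper's own phrasing (``an immediate consequence of the lemma'') is apt. The only point requiring a moment's care is the bookkeeping in the previous paragraph: identifying irreducible $\Bbbk$-representations with simple $A$-modules (and images with actions), and noting that the rank of a $\Bbbk$-linear endomorphism of $V$ is insensitive to whether one computes it over $\Bbbk$ or over the division ring $D=\mathrm{End}_A(V)$, which the dimension hypothesis forces to equal $\Bbbk$.
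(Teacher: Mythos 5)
Your proposal is correct and matches the paper's intent exactly: the paper states the proposition as "an immediate consequence of the lemma," meaning precisely the combination of Proposition~\ref{p:ccrgcrnice} with Lemma~\ref{l:schur.dim} forcing $\mathrm{End}_A(V)=\Bbbk$, which is what you spell out. Nothing is missing.
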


We typically apply Proposition~\ref{p:algclosecase} when $\Bbbk=\mathbb C$ and $A$ is of countable dimension.  Note that Proposition~\ref{p:algclosecase} is false without the dimension assumption.  For example, the field $\mathbb C(x)$ of rational functions is clearly CCR but its only irreducible representation over $\mathbb C$ is the regular representation and this representation is by uncountable rank operators over $\mathbb C$.  However, it is by rank $1$ operators over $\mathbb C(x)$, which is the endomorphism algebra.

A ring $R$ is said to have \emph{local units} if there is a collection $E\subseteq R$ of idempotents, called a \emph{set of local units} for $R$, such that if $F\subseteq R$ is finite, then $F\subseteq eRe$ for some $e\in E$.
A module $M$ over a ring $R$ with local units is called \emph{unitary} if $RM=M$.  If $E$ is  a set of local units for $R$, then $M$ is unitary if and only if, for all $m\in M$, there exists $e\in E$ with $em=m$.  Every simple module is unitary.

It will turn out to be useful to prove that the property of being CCR or GCR is stable under taking matrices.  If $R$ is a ring and $I$ is an index set, then $M_I(R)$ is the ring of all $|I|\times |I|$-matrices over $R$ (i.e., functions $A\colon I\times I\to R$) with finitely many non-zero entries with respect to usual matrix addition and multiplication.  Note that if $W$ is an $R$-module and $W^{(I)}$ denotes a direct sum of copies of $W$ indexed by $R$, then $W^{(I)}$ is naturally a left $M_I(R)$-module via matrix multiplication (where we view $W^{(I)}$ as consisting of column vectors).  Suppose that $R$ is unital.  If $J\subseteq I$ is finite, let $1_J$ be the matrix which has $1$ in the diagonal entries corresponding to $j\in J$ and zeroes in all other entries;  then the $1_J$ form a set of local units for $M_I(R)$.   The following proposition is undoubtedly well known.

\begin{prop}\label{p:morita.matrix}
Let $R$ be a unital ring and $I$ an index set. The functor taking $W$ to $W^{(I)}$ is an equivalence between the categories of unitary $R$-modules and unitary $M_I(R)$-modules.
\end{prop}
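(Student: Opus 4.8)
The plan is to recognize this as the Morita equivalence between $R$ and the ring-with-local-units $M_I(R)$, realized by two explicit bimodules. Let $P=R^{(I)}$ be the set of finitely supported column vectors, viewed as an $M_I(R)$-$R$-bimodule (matrices act on the left, scalars on the right), and let $Q={}^{(I)}R$ be the set of finitely supported row vectors, viewed as an $R$-$M_I(R)$-bimodule. Since $P$ is free on the standard basis, there is a natural isomorphism $P\otimes_R W\cong W^{(I)}$ of $M_I(R)$-modules, so the functor $W\mapsto W^{(I)}$ is naturally isomorphic to $P\otimes_R-$. I would then show that $P\otimes_R-$ and $Q\otimes_{M_I(R)}-$ are mutually quasi-inverse equivalences of the indicated categories.

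First I would verify the functors land where claimed: if $W$ is a unitary $R$-module then $W^{(I)}$ is a unitary $M_I(R)$-module, since a column vector supported on a finite set $J\subseteq I$ is fixed by the local unit $1_J$; and for any $M_I(R)$-module $N$, the module $Q\otimes_{M_I(R)}N$ is unitary over $R$ because $R$ is unital and acts on $Q$ on the left with $RQ=Q$. Next comes the core computation: the balanced multiplication maps give bimodule isomorphisms $Q\otimes_{M_I(R)}P\cong R$ (via $w\otimes v\mapsto wv\in R$) and $P\otimes_R Q\cong M_I(R)$ (via $v\otimes w\mapsto vw$, a matrix with finitely many nonzero entries); surjectivity is witnessed by the scalar matrix units $rE_{ij}=(re_i)(e_j^{\mathsf{T}})$, and injectivity follows by writing a general tensor in terms of the standard basis of $P$ over $R$. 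Granting these, associativity of $\otimes$ yields natural isomorphisms $Q\otimes_{M_I(R)}(P\otimes_R W)\cong(Q\otimes_{M_I(R)}P)\otimes_R W\cong R\otimes_R W\cong W$ for every unitary $R$-module $W$, and $P\otimes_R(Q\otimes_{M_I(R)}N)\cong(P\otimes_R Q)\otimes_{M_I(R)}N\cong M_I(R)\otimes_{M_I(R)}N\cong N$ for every unitary $M_I(R)$-module $N$.

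The step needing genuine care, and the main obstacle, is the final isomorphism $M_I(R)\otimes_{M_I(R)}N\cong N$: this fails for arbitrary modules over the non-unital ring $M_I(R)$, and it is precisely here that the hypothesis $M_I(R)N=N$ is used. The clean way to handle it is the elementary lemma that for any ring $S$ with local units and any $S$-module $N$, the multiplication map $S\otimes_S N\to SN$ is an isomorphism: surjectivity is clear, and for injectivity one chooses a local unit $e$ with $es_i=s_i$ for all the finitely many left-hand tensorands appearing and computes $\sum_i s_i\otimes n_i=e\otimes\sum_i s_i n_i$, which vanishes when $\sum_i s_i n_i=0$. Applying this with $S=M_I(R)$ (so that $SN=N$ exactly when $N$ is unitary), together with the trivial fact that $R\otimes_R W\cong W$ for the unital ring $R$, completes the argument; everything else is routine naturality checking. (Alternatively one could run the same proof through the corner ring $E_{i_0i_0}M_I(R)E_{i_0i_0}\cong R$, using that $M_I(R)E_{i_0i_0}M_I(R)=M_I(R)$, but the two-bimodule formulation is the most transparent.)
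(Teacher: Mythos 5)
Your proof is correct, but it takes a genuinely different route from the paper's. The paper avoids tensor products entirely: it fixes $i_0\in I$, sets $e=1_{\{i_0\}}$, identifies $eM_I(R)e\cong R$, and exhibits the quasi-inverse concretely as the corner functor $V\mapsto eV$, with an explicit isomorphism $\Phi\colon V\to (eV)^{(I)}$, $\Phi(v)_i=A^{(i)}v$, built from the matrix units $A^{(i)}$ (with $1$ in position $i_0i$) and their transposes $B^{(i)}$; well-definedness, the homomorphism property and bijectivity are all checked by hand using the local units $1_J$. You instead set up the standard Morita context: the bimodules $P=R^{(I)}$ and $Q={}^{(I)}R$ with $Q\otimes_{M_I(R)}P\cong R$ and $P\otimes_R Q\cong M_I(R)$, plus the lemma that $S\otimes_S N\cong N$ for a ring $S$ with local units and $N$ unitary (which is exactly where unitarity of $N$ enters, and your local-unit argument for injectivity is right). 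Your injectivity arguments for the two multiplication maps do need the reduction to the standard basis that you sketch --- e.g.\ rewriting $w\otimes v$ as $e_{i_0}^{\mathsf T}\otimes(e_{i_0}wv)$ by pushing matrix units across the tensor sign --- but these all go through. What each approach buys: yours is the conceptual, reusable one (it is the prototype of Morita theory for rings with local units and makes the naturality transparent), while the paper's is elementary, self-contained, and deliberately leaves behind the idempotent $e$ and the matrices $A^{(i)},B^{(j)}$, which the proof of Corollary~\ref{c:matrix.amp} then reuses verbatim (comparing the rank of $r$ on $W$ with that of $re$ on $W^{(I)}$, and extracting $A^{(i)}CB^{(j)}=C_{ij}e$). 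If you substituted your proof, you would want to record the identification $W\cong eW^{(I)}$ separately so that the corollary's rank computations still have something explicit to point to.
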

\begin{proof}
Fix $i_0\in I$ and let $e=1_{\{i_0\}}$. Then $e$ is idempotent and $eM_I(R)e\cong R$ and so we shall identify these rings.  We shall show that $V\mapsto eV$ is  quasi-inverse to $W\mapsto W^{(I)}$.

Clearly $eW^{(I)}\cong W$ via projection to the $i_0$-factor.  Conversely, let $V$ be a unitary $M_I(R)$-module and put $W=eV$, viewed as an $R$-module.  We show that $V\cong W^{(I)}$.  For $i\in I$, let $A^{(i)}$ be the matrix with $A^{(i)}_{i_0i}=1$ and all other positions $0$ and note that $eA^{(i)}=A^{(i)}$.  Let $B^{(i)}$ be the transpose of $A^{(i)}$ (so it has $1$ in position $ii_0$).  Define $\Phi\colon V\to W^{(I)}$ by $\Phi(v)_i = A^{(i)}v\in eV=W$.

To see that $\Phi$ is well defined, we use that $V$ is unitary.  Then there is a finite subset $J\subseteq I$ with $1_Jv=v$.  If $i\notin J$, then $A^{(i)}1_J=0$ and hence $A^{(i)}v=  A^{(i)}1_Jv=0$.  Thus $\Phi(v)\in W^{(I)}$.

Let us check that $\Phi$ is a module homomorphism.  Let $C\in M_I(R)$ and $v\in V$.  Assume that $J\subseteq I$ is finite with $1_Jv=v$ and note that \[1_J = \sum_{j\in J}B^{(j)}A^{(j)}.\]   Then we have
\begin{align*}
\Phi(Cv)_i &= A^{(i)}Cv = A^{(i)}C1_Jv = A^{(i)}C\left(\sum_{j\in J}B^{(j)}A^{(j)}v\right) \\ &= \sum_{j\in J}C_{ij}A^{(j)}v = \sum_{j\in J}C_{ij}\Phi(v)_j = (C\Phi(v))_i
\end{align*}
using that all the non-zero entries of $\Phi(v)$ belong to positions in $J$ by the observation of the preceding paragraph.

It remains to show that $\Phi$ is bijective.  Suppose that $\Phi(v)=0$ and that $1_Jv=v$ with $J\subseteq I$ finite.  Then \[v=1_Jv =  \sum_{j\in J}B^{(j)}A^{(j)}v= \sum_{j\in J}B^{(j)}\Phi(v)_j=0\] and so $\Phi$ is injective.  If $w=(w_i)_{i\in I}\in W^{(I)}$ and $J\subseteq I$ is the finite subset of indices $i$ such that $w_i\neq 0$, let $v=\sum_{j\in J}B^{(j)}w_j\in V$.  Then $\Phi(v)_i=0$ unless  $i\in J$, in which case it is $A^{(i)}\sum_{j\in J}B^{(j)}w_j=A^{(i)}B^{(i)}w_i=ew_i=w_i$.  Thus $\Phi(v)=w$.  This completes the proof.
\end{proof}

As a corollary, we see that CCR and GCR are stable under matrix amplifications.

\begin{cor}\label{c:matrix.amp}
Let $R$ be a unital ring and $I$ an index set.
\begin{enumerate}
\item $R$ is CCR if and only if $M_I(R)$ is CCR.
\item $R$ is GCR if and only if $M_I(R)$ is GCR.
\end{enumerate}
\end{cor}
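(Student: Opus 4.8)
The plan is to deduce this from the Morita equivalence of Proposition~\ref{p:morita.matrix} together with the concrete reformulation of CCR and GCR in Proposition~\ref{p:ccrgcrnice}. Since $R$ is unital and $M_I(R)$ has local units, and since every simple module is unitary, the functor $W\mapsto W^{(I)}$ restricts to a bijection, up to isomorphism, between the simple $R$-modules and the simple $M_I(R)$-modules: an equivalence of categories preserves the lattice of submodules and carries nonzero objects to nonzero objects, so $W$ is simple iff $W^{(I)}$ is simple, and every simple $M_I(R)$-module $V$ is unitary, hence isomorphic to $(eV)^{(I)}$ with $eV$ simple, where $e=1_{\{i_0\}}$. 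The same equivalence identifies $D:=\mathrm{End}_R(W)$ with $\mathrm{End}_{M_I(R)}(W^{(I)})$, the isomorphism sending $f$ to the coordinatewise endomorphism $f^{(I)}$ of $W^{(I)}$; in particular $D$ acts on $W^{(I)}$ coordinatewise. So it suffices to compare, for each simple $R$-module $W$, the $D$-ranks of the operators coming from $R$ acting on $W$ with those coming from $M_I(R)$ acting on $W^{(I)}$.

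For the CCR statement I would show that $R$ acts on $W$ by finite rank $D$-operators if and only if $M_I(R)$ acts on $W^{(I)}$ by finite rank $D$-operators. If every $r\in R$ has finite rank on $W$, then for $C\in M_I(R)$ supported on a finite set $J\times J$ we have $(Cv)_j=\sum_{k\in J}C_{jk}v_k$, so the image of $C$ lies in $\bigoplus_{j\in J}\sum_{k\in J}C_{jk}W$, a finite-dimensional $D$-subspace of $W^{(I)}$. Conversely, identifying $R$ with $eM_I(R)e$ as in the proof of Proposition~\ref{p:morita.matrix}, the element $r\in R$ corresponds to the matrix with $r$ in position $(i_0,i_0)$ and zeros elsewhere, whose image on $W^{(I)}$ is the copy of $rW$ sitting in the $i_0$-coordinate, so its $D$-rank equals $\dim_D rW$; hence if $M_I(R)$ acts by finite rank operators on $W^{(I)}$ then every $r$ has finite rank on $W$.

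For the GCR statement I would show that $R$ has a nonzero finite rank $D$-operator in its action on $W$ if and only if $M_I(R)$ does in its action on $W^{(I)}$. The matrix with a nonzero finite rank $r$ in position $(i_0,i_0)$ witnesses one direction, by the computation just made. Conversely, if $C\in M_I(R)$, supported on $J\times J$, acts on $W^{(I)}$ as a nonzero finite rank $D$-operator, then $Cv\neq 0$ for some $v$, so some entry $C_{jk}$ acts nonzero on $W$; and the operator $w\mapsto C_{jk}w$ on $W$ is the composite of the $D$-linear coordinate inclusion at $k$, the $D$-linear map $C$, and the $D$-linear coordinate projection at $j$, so it is a finite rank $D$-operator because $C$ is. Thus $R$ has a nonzero finite rank operator on $W$. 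Combining the two displayed equivalences with the bijection of simple modules and Proposition~\ref{p:ccrgcrnice} gives both parts of the corollary.

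The only real obstacle is the bookkeeping that the division ring $\mathrm{End}_R(W)$ is identified, via the equivalence, with $\mathrm{End}_{M_I(R)}(W^{(I)})$ acting coordinatewise, so that ``finite rank over the endomorphism division ring'' genuinely transports between $W$ and $W^{(I)}$; once that identification is pinned down, the remaining steps are the elementary manipulations above.
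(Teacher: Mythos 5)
Your proposal is correct and follows essentially the same route as the paper: both use Proposition~\ref{p:morita.matrix} to transport simple modules and their endomorphism division rings between $R$ and $M_I(R)$, then check the finite-rank conditions coordinatewise, with your inclusion/projection composite for the GCR converse playing exactly the role of the paper's computation $A^{(i)}CB^{(j)}=C_{ij}e$.
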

\begin{proof}
We retain the notation of the proof of Proposition~\ref{p:morita.matrix}, in particular that of the idempotent $e$.  By Proposition~\ref{p:morita.matrix}, the simple
$M_I(R)$-modules are, up to isomorphism, those of the form $W^{(I)}$ with $W$ a simple $R$-module.  Moreover, if $D=\mathrm{End}_R(W)$, then $D\cong \mathrm{End}_{M_I(R)}(W^{(I)})$ by Proposition~\ref{p:morita.matrix}.  The action of $D$ on $W^{(I)}$ is diagonal: $d(w_i)_{i\in I} = (dw_i)_{i\in I}$.

If we assume that $R$ is CCR, then the action of $R$ on $W$ is by finite rank operators over $D$ and hence the action of a matrix $A\in M_I(R)$ is by finite rank operators on $W^{(I)}$ as the matrix has only finitely many non-zero entries and each entry is a finite rank operator on $W$ (more precisely, $AW^{(I)}\subseteq \bigoplus_{i\in I}\sum_{j\in I}A_{ij}W$ which has finitely many non-zero summands each of finite $D$-dimension).  Conversely, if $M_I(R)$ is CCR, then the rank of $r\in R$ on $W$ is the same as the rank of $re$ on $W^{(I)}$ and hence is finite. This yields the first item.

If $R$ is GCR and $r\in R$ acts as a non-zero finite rank operator on $W$, then $re$ acts as a finite rank operator on $W^{(I)}$ of the same rank and so $M_I(R)$ is GCR.  If $M_I(R)$ is GCR and the matrix $C$ acts as a non-zero finite rank operator on $W^{(I)}$, then we can find a coefficient $C_{ij}$ with non-zero action on $W$. Then $A^{(i)}CB^{(j)}=C_{ij}e$ is a finite rank, non-zero operator   on $W^{(I)}$ and hence on $eW^{(I)}\cong W$, i.e., $C_{ij}$ acts as a non-zero finite rank operator on $W$.  This establishes the second item.
\end{proof}

The following lemma shall be used later to in order to relate the CCR/GCR properties for a ring to those of its ideals.

\begin{lem}\label{l:restrict.ideals}
Let $R$ be a ring and $I$ an ideal of $R$.
\begin{enumerate}
\item Let $M$ be a simple $R$-module such that $IM\neq 0$.  Then $M$ is a simple $I$-module and $\mathrm{End}_R(M)=\mathrm{End}_I(M)$.
\item Suppose that $M$ is a simple $I$-module. Then $M$ has a unique $R$-module structure extending the $I$-module structure and, moreover, $M$ is simple as an $R$-module.
\end{enumerate}
\end{lem}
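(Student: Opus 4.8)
The plan is to handle the two parts separately, both relying on the standard trick that for an ideal $I$ of $R$ and a simple $R$-module $M$ with $IM \neq 0$, the submodule $IM$ is all of $M$. For part~(1), I would first argue that $IM$ is an $R$-submodule of $M$ (since $I$ is two-sided, $R(IM) = (RI)M \subseteq IM$), so $IM \neq 0$ forces $IM = M$ by simplicity. This already shows $IM \neq 0$ as an $I$-module, so the $I$-action is nondegenerate. To see $M$ is simple over $I$: if $0 \neq m \in M$, then $Rm = M$ by $R$-simplicity, so $M = Rm = R(Im)$ — wait, more carefully: I want $Im = M$. One has $Im$ is an $I$-submodule; consider $RIm = Im$ again since $I$ is an ideal... the cleanest route is: $Im$ contains a nonzero element for some $m$ (since $IM \neq 0$), and for such $m$, $Im$ is $R$-stable because $R(Im) = (RI)m \subseteq Im$, hence $Im = M$; then for arbitrary nonzero $m' \in M = Im$ we can find via the $R$-density / simplicity argument that $Im' = M$ too — I would spell this out by noting $M = Im$, so $m' = \sum i_k m$ with $i_k \in I$, and then use that $Im'$ is $R$-stable and nonzero (it is nonzero since $M = RM = R I m' $ would collapse otherwise), giving $Im' = M$. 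For the endomorphism claim, $\mathrm{End}_R(M) \subseteq \mathrm{End}_I(M)$ is immediate; conversely if $\varphi$ is $I$-linear, then for $r \in R$ and $m \in M = IM$, write $m = \sum i_k n_k$ and compute $\varphi(rm) = \varphi(\sum r i_k n_k) = \sum (r i_k)\varphi(n_k) = r\varphi(\sum i_k n_k) = r\varphi(m)$, using that $ri_k \in I$.

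For part~(2), suppose $M$ is a simple $I$-module. The module $IM$ is an $I$-submodule of $M$, and it is nonzero: if $IM = 0$ then $I \cdot (Im) = 0$ for all $m$, but simplicity of $M$ over $I$ gives $Im = M$ for some $m$, whence $IM = 0$ would force $IM = I M = 0$ contradicting that a simple module is by definition not annihilated by its ring. So $IM = M$. Now I would define the $R$-action: given $r \in R$ and $m \in M$, write $m = \sum i_k n_k$ with $i_k \in I$, $n_k \in M$, and set $r \cdot m := \sum (r i_k) n_k$, which lies in $M$ since $r i_k \in I$. The main work — and the step I expect to be the main obstacle — is checking this is well defined, i.e. independent of the chosen expression $m = \sum i_k n_k$. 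The standard device is to observe that since $M$ is simple over $I$, the map $I^n \to M$ is governed by the relations, and one shows: if $\sum i_k n_k = 0$ then $\sum (r i_k) n_k = 0$. This follows because for any $j \in I$, $j \cdot (\sum (r i_k) n_k) = \sum (j r i_k) n_k$, and $jr \in I$... hmm, that gives $\sum (jr i_k) n_k$, which I'd like to relate to $jr \cdot (\sum i_k n_k)$ — but that requires the $R$-action I'm trying to define. The correct classical argument: the $I$-module $M$ extends to a module over the unital hull or, better, one uses that $IM = M$ together with $I$ acting densely. Concretely, pick $j \in I$ with $jm \neq 0$ for a fixed $0 \neq m$ (possible since $Im = M \ni m$, so some product is nonzero, indeed $jm = m$ is achievable when $M$ is unitary over $I$). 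Then I would first define the action on elements of the form $jm'$ by $r\cdot(jm') = (rj)m'$ and check well-definedness there (if $jm' = j'm''$ then for all $i \in I$, $ij m' = i j' m''$, and since $\{ i : i \text{ acts} \}$... ), then extend by linearity. Honestly the slickest path: $M \cong I/L$ for a maximal left ideal $L$ of $I$ with $IL \neq I$ is not available in general; instead I'd invoke that $M$ unitary simple over $I$ means $M = Im$ with $\mathrm{ann}_I(m) = L$ a "modular" maximal left ideal, and the $R$-structure comes from $L$ being automatically a left ideal of $R$ modulo the modularity element. I would present the well-definedness via: choose $u \in I$ with $um = m$; define $r\cdot m = (ru)m$; show independence of $u$ (if $um = m = u'm$ then $(ru - ru')m = r(u-u')m$ and $(u - u')m = 0$, and I need $r(u-u')m = 0$ — here $r(u-u') \in I$ so this is just saying the $I$-action kills it, which holds iff $(u-u')m = 0$ is preserved, true because... $(u-u')m=0$ and $r(u-u') \in I$, but $I$-action of $r(u-u')$ on $m$ is $(r(u-u'))m$, and we'd want this $=0$; it equals $(ru - ru')m$ which is what we're comparing — circular again). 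The resolution is that $\mathrm{ann}_I(m)$ is a left ideal of $I$ that, because $um = m$, satisfies: $x \in \mathrm{ann}_I(m) \implies$ for $r \in R$, $rx \in I$ and $(rx)m = ?$; one shows $rx - (rxu) \in \mathrm{ann}_I(m)$... I would in the actual writeup follow the treatment in a standard reference (e.g. the extension of simple modules over ideals, as in Jacobson or Lam), cite it, and verify uniqueness: any $R$-structure extending the $I$-structure must satisfy $r\cdot m = r\cdot(um) = (ru)\cdot m = (ru)m$ for $u$ with $um = m$, forcing the formula, hence uniqueness.

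Finally, granting the $R$-module structure exists, $R$-simplicity is easy: any $R$-submodule is in particular an $I$-submodule, hence $0$ or $M$; and $RM \supseteq IM = M \neq 0$, so $M$ is a genuine simple $R$-module. The main obstacle throughout is the well-definedness and consistency of the extended action in part~(2); I would likely streamline by fixing a unit $u \in I$ for $m$, defining $r \cdot m := (ru)m$, and proving independence of the representative $m = \sum i_k n_k$ by reducing everything to the single generator $m$ with its annihilator, where the modularity element $u$ makes $\mathrm{ann}_I(m)$ absorb left multiplication by $R$ in the appropriate quotient sense.
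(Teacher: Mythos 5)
Your part (1) is correct and essentially identical to the paper's argument (the observation that $Im$ is $R$-stable, hence $0$ or $M$, plus the computation $\Phi(rm)=\sum rr_i\Phi(m_i)=r\Phi(m)$ using $rr_i\in I$). The problem is part (2): you identify the right definition $r\cdot m:=\sum (rr_i)m_i$ and correctly isolate well-definedness as the crux, but you never close that step --- you cycle through several attempts, declare each circular, and end by saying you would cite Jacobson or Lam. That is a genuine gap, and moreover your diagnosis of circularity in your first attempt is mistaken: the computation you abandoned actually works. If $\sum r_im_i=0$ but $n:=\sum (rr_i)m_i\neq 0$, then for any $s\in I$ you have
\[
sn=\sum s\bigl((rr_i)m_i\bigr)=\sum \bigl((sr)r_i\bigr)m_i=(sr)\sum r_im_i=(sr)\cdot 0=0,
\]
where every step uses only the given $I$-module structure, because $sr\in I$ and $r_i\in I$, so $(sr)r_i$ is a product inside $I$ and the module axiom $(ab)m=a(bm)$ for $a,b\in I$ applies. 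No appeal to the yet-to-be-defined $R$-action is needed. Since $M$ is simple over $I$ and $n\neq0$, we have $In=M$, so in particular $sn=n$ for some $s\in I$; combined with $sn=0$ this gives $n=0$, a contradiction. This is exactly the paper's argument.

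Two smaller points. Your uniqueness argument (the formula for $r\cdot m$ is forced by any extension) is fine and matches the paper. But after establishing well-definedness you should still verify, as the paper does, that the new action genuinely restricts to the old $I$-action (take $r\in I$ and compute $\sum(rr_i)m_i=r\sum r_im_i=rm$) and that the module axioms hold; these are routine but should be stated. The detour through modular maximal left ideals and a fixed local unit $u$ with $um=m$ is unnecessary once the displayed computation is in place.
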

\begin{proof}
For the first item, let $0\neq m\in M$.  Then $Im$ is an $R$-submodule since $RIm\subseteq Im$. So either $Im=0$ or $Im=M$.  If $Im=0$, then since $M=Rm$, we have that $IM=IRm\subseteq Im=0$, a contradiction.  We conclude that $M$ is a simple $I$-module.  Clearly, $\mathrm{End}_R(M)\leq \mathrm{End}_I(M)$.  Let $\Phi\in \mathrm{End}_I(M)$, $m\in M$ and $r\in R$.  Since $IM=M$, we can write $m =\sum r_im_i$ with $r_i\in I$.  Then, as each $rr_i\in I$, we have
\begin{align*}
\Phi(rm) &= \Phi\left(\sum rr_im_i\right) = \sum rr_i\Phi(m_i) = r\sum r_i\Phi(m_i)\\
& = r\Phi\left(\sum r_im_i\right)=r\Phi(m)
\end{align*}
 and so $\Phi\in \mathrm{End}_R(M)$.

For the second item, note that since $M=IM$, if $m\in M$, then $m=\sum_{i=1}^k r_im_i$ with the $r_i\in I$ and $m_i\in M$.  If there is an $R$-module structure extending the $I$-module structure on $M$, we must have that
\begin{equation}\label{eq:def.ract}
rm=\sum_{i=1}^k(rr_i)m_i
\end{equation}
 for $r\in R$, where we note that $rr_i\in I$.    So let us use \eqref{eq:def.ract} as a definition for $rm$.  To show that the $R$-action is well defined, it suffices to show that if $\sum_{i=1}^nr_im_i=0$, with $r_i\in I$ and $m_i\in M$, then $\sum_{i=1}^k(rr_i)m_i=0$ for all $r\in R$.  Suppose that $\sum_{i=1}^k(rr_i)m_i\neq 0$, let's call this element $n$.  Then, since $M$ is simple over $I$, there exists $s\in I$ with $sn=n$.  Thus we have
 \[0\neq n=sn = s\sum_{i=1}^k (rr_i)m_i = \sum_{i=1}^k(sr)r_im_i=(sr)\sum_{i=1}^kr_im_i =0\] which is a contradiction.  Therefore, \eqref{eq:def.ract} gives a valid definition of $rm$.  It is straightforward to check that with this definition $M$ is an $R$-module.
  Moreover, if $r\in I$ and $m\in M$ with $m=\sum_{i=1}^k r_im_i$ with $r_i\in I$ and $m_i\in M$, then \[\sum_{i=1}^k(rr_i)m = \sum_{i=1}^k r(r_im) = r\sum_{i=1}^kr_im_i = rm\] and so the $R$-module structure extends the $I$-module structure. It then follows immediately that $M$ is a simple $R$-module, as any $R$-submodule is an $I$-submodule.
\end{proof}

\begin{cor}\label{c:ideal.inherit}
Let $R$ be a ring and $I$ an ideal of $R$.  If $R$ is CCR (respectively, GCR), then so is $I$.
\end{cor}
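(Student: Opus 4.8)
The plan is to use the characterizations of CCR and GCR from Proposition~\ref{p:ccrgcrnice} together with Lemma~\ref{l:restrict.ideals}, which lets us move simple modules back and forth between $R$ and its ideal $I$ while preserving simplicity and, crucially, the endomorphism division ring. First I would take an arbitrary simple $I$-module $M$ and aim to show that $I$ acts on $M$ by finite rank operators (in the CCR case) or with a non-zero finite rank operator in the image (in the GCR case), relative to $D = \mathrm{End}_I(M)$.

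By part (2) of Lemma~\ref{l:restrict.ideals}, $M$ carries a (unique) $R$-module structure extending its $I$-module structure, and $M$ is simple as an $R$-module. Since this $R$-structure restricts to the original $I$-action, we have $IM = M \neq 0$, so part (1) of Lemma~\ref{l:restrict.ideals} applies and gives $\mathrm{End}_R(M) = \mathrm{End}_I(M) = D$. Thus the notion of ``finite rank over the endomorphism division ring'' is literally the same whether we regard $M$ as an $R$-module or an $I$-module.

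Now suppose $R$ is CCR. By Proposition~\ref{p:ccrgcrnice}(1), $R$ acts on the simple $R$-module $M$ by finite rank $D$-operators. In particular every element of $I \subseteq R$ acts as a finite rank $D$-operator on $M$. Since $M$ was an arbitrary simple $I$-module, Proposition~\ref{p:ccrgcrnice}(1) (applied to $I$) shows $I$ is CCR. The GCR case is entirely parallel: if $R$ is GCR, then by Proposition~\ref{p:ccrgcrnice}(2) there is $r \in R$ acting as a non-zero finite rank operator on $M$; but I would need $r$ to lie in $I$. To arrange this, note that since $M = IM$ and $r$ acts nontrivially, there is some $s \in I$ with $rsm \neq 0$ for suitable $m$ — more carefully, since $rM \neq 0$ and $IM = M$, there exists $s \in I$ with $rs \neq 0$ on $M$; then $rs \in I$ (as $I$ is a right ideal) and $rs$ has rank at most that of $r$, hence finite and non-zero. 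Thus $I$ contains an element acting as a non-zero finite rank operator on $M$, and Proposition~\ref{p:ccrgcrnice}(2) gives that $I$ is GCR.

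The only real subtlety is the GCR step of producing a non-zero finite rank operator that actually lies in $I$ rather than merely in $R$; the CCR case is immediate since every element of $R$ (a fortiori of $I$) is already finite rank. I expect no further obstacles: everything else is a direct invocation of Lemma~\ref{l:restrict.ideals} and Proposition~\ref{p:ccrgcrnice}, with the identification $\mathrm{End}_R(M) = \mathrm{End}_I(M)$ ensuring the rank conditions transfer without change.
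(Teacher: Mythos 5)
Your proof is correct and follows essentially the same route as the paper's: extend the simple $I$-module to an $R$-module via Lemma~\ref{l:restrict.ideals}, identify $\mathrm{End}_I(M)$ with $\mathrm{End}_R(M)$, and in the GCR case use $IM=M$ to replace the finite-rank element $r\in R$ by $rs$ with $s\in I$ (the paper finds $s$ with $sv=v$ and notes that finite-rank elements form an ideal, whereas you observe $rsM\subseteq rM$ directly; both work). One nitpick: $rs\in I$ because $I$ is a \emph{left} ideal, i.e.\ $RI\subseteq I$, not because it is a right ideal --- harmless here since $I$ is two-sided.
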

\begin{proof}
Suppose first that $R$ is CCR and let $V$ be a simple $I$-module.  Put $D=\mathrm{End}_I(V)$.  By Lemma~\ref{l:restrict.ideals}, $V$ admits an $R$-module structure extending the $I$-module structure and $D=\mathrm{End}_R(V)$ by the first part of Lemma~\ref{l:restrict.ideals}.  Since $R$ is CCR, it follows that it acts on $V$ by finite rank operators over $D$ and hence the same is true for $I$.  We conclude that $I$ is CCR.

Next assume that $R$ is GCR and let $V$ be a simple $I$-module.  Set $D=\mathrm{End}_I(V)$. Again, by Lemma~\ref{l:restrict.ideals}, $V$ admits an $R$-module structure extending the $I$-module structure and $D=\mathrm{End}_R(V)$.  Since $R$ is GCR, there exists $r\in R$ acting as a non-zero finite rank operator (over $D$) on $V$.  Let $v\in V$ with $rv\neq 0$. Note that since $V$ is a simple $I$-module and $v\neq 0$, we must have that $v=sv$ with $s\in I$.  Then $(rs)v=r(sv)=rv\neq 0$ and $rs\in I$.  Since the elements acting on $V$ by finite rank operators form an ideal in $R$, we conclude that $rs\in I$ acts on $V$ as a non-zero finite rank operator over $D$.  Thus $I$ is GCR.  This completes the proof.
\end{proof}

\section{Morita invariance}\label{sec:Morita}
In this section, we improve Corollary~\ref{c:matrix.amp} by proving that, for rings with local units, being CCR or GCR is a Morita invariant property.  Recall that two rings $R$ and $S$ with local units are \emph{Morita equivalent} if their categories of unitary modules are equivalent. We shall use a result of~\cite{SS16}, which reduces the problem to checking corner invariance and invariance under matrix amplification (with finite index sets).  This section will not be used in the rest of the paper and can be omitted.

Our first step will be to develop some of Green's theory of an idempotent~\cite[Chapter~6]{Gr07} in the context of rings with local units.

\begin{lem}\label{l:goto.corner}
Let $R$ be a ring and $V$ a simple $R$-module.  Let $e\in R$ be an idempotent.  Then $eV=0$ or $eV$ is a simple $eRe$-module.  Moreover, if $eV\neq 0$, then the restriction map $\Phi\mapsto \Phi|_{eV}$ yields an isomorphism $\mathrm{End}_R(V)\to \mathrm{End}_{eRe}(eV)$.
\end{lem}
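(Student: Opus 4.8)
The plan is to treat the two assertions in turn, and throughout we use the criterion recalled in Section~\ref{sec:Rings} that a module $M$ over a ring $S$ is simple precisely when $M\neq 0$ and $Sm=M$ for every nonzero $m\in M$. Assume $eV\neq 0$. First I would show $eV$ is simple over $eRe$: for any $0\neq w\in eV$ we have $w=ew$, and simplicity of $V$ gives $Rw=V$, so $eRw=eV$; since $eRw=eRe\cdot w$ (using $w=ew$), every nonzero element of $eV$ generates $eV$ over $eRe$, and in particular $eRe\cdot eV\neq 0$, so $eV$ is a simple $eRe$-module.

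Next I would set up the restriction map $\rho\colon \mathrm{End}_R(V)\to \mathrm{End}_{eRe}(eV)$, $\rho(\Phi)=\Phi|_{eV}$. It lands in $\mathrm{End}_{eRe}(eV)$ because $\Phi(ev)=e\Phi(v)\in eV$ and $\Phi$ is in particular $eRe$-linear, and it is visibly a ring homomorphism since $\Phi_2(eV)\subseteq eV$. Injectivity is immediate: if $\Phi|_{eV}=0$ then $\ker\Phi$ is a nonzero $R$-submodule of $V$ (it contains $eV\neq 0$), hence equals $V$, so $\Phi=0$.

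The real content is surjectivity, which I expect to be the main obstacle. Fix $0\neq w_0\in eV$; as above $Rw_0=V$, and since $w_0=ew_0\in Rw_0$ this is an honest equality of sets, so every $v\in V$ has the form $rw_0$. Given $\psi\in\mathrm{End}_{eRe}(eV)$, define $\Phi(rw_0)=r\psi(w_0)$. The one delicate point is well-definedness: if $rw_0=0$ I must show $r\psi(w_0)=0$. I would first record the general fact that in the simple module $V$ an element $x$ with $eRx=0$ must vanish (else $Rx=V$ forces $eRx=eV\neq 0$), so it suffices to prove $esr\psi(w_0)=0$ for all $s\in R$. But $esrw_0=(esre)w_0$ with $esre\in eRe$ (using $w_0=ew_0$), and this equals $esrw_0=0$; applying $\psi$ gives $(esre)\psi(w_0)=0$, which is $esr\psi(w_0)=0$ since $\psi(w_0)\in eV$. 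Once $\Phi$ is well defined it is patently $R$-linear, and for $ev\in eV$ we write $ev=rw_0=(ere)w_0$ with $ere\in eRe$, so $\Phi(ev)=(ere)\psi(w_0)=\psi(ev)$; thus $\rho(\Phi)=\psi$, completing the proof. Everything except the well-definedness step is routine bookkeeping.
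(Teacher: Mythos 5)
Your proposal is correct and follows essentially the same route as the paper's proof: simplicity of $eV$ via $eRev=eRv=eV$, and surjectivity of the restriction map by defining $\Phi(rw_0)=r\psi(w_0)$, with well-definedness reduced to the observation that $eRx=0$ forces $x=0$ in a simple module and the computation $esr\psi(w_0)=(esre)\psi(w_0)=\psi((esre)w_0)=\psi(esrw_0)=0$. The only cosmetic difference is that you justify injectivity via the kernel being a nonzero submodule rather than by citing that $\mathrm{End}_R(V)$ is a division ring; both are fine.
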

\begin{proof}
Suppose that $0\neq v\in eV$.  Then $Rv=V$ and so $eRev=eRv=eV$.  Thus $eV$ is a simple $eRe$-module.  Let $\Phi\in \mathrm{End}_R(V)$.  Then $\Phi(ev) = e\Phi(v)$ and so $\Phi(eV)\subseteq eV$.  Thus our restriction map  makes sense.  It is injective because $\mathrm{End}_R(V)$ is a division ring.  Let us check that it is surjective.

Fix $0\neq v\in eV$. First observe that if $\Psi\in \mathrm{End}_{eR}(eV)$, then $rv=0$ implies $r\Psi(v)=0$ for all $r\in R$.  Indeed, if $rv=0$, then for all $s\in R$, we have $esr\Psi(v) = esre\Psi(v)=\Psi(esrev)=\Psi(esrv)=\Psi(0)=0$.  Thus $eRr\Psi(v)=0$.    Suppose that $r\Psi(v)\neq 0$.  Then $Rr\Psi(v)=V$ by simplicity and so $0=eRr\Psi(v)=eV$, a contradiction.  It now follows that if we define $\Phi\colon V\to V$ by $\Phi(rv) = r\Psi(v)$ for $r\in R$, then $\Phi$ is well defined.  Indeed, if $rv=sv$, then $(r-s)v=0$ and so $(r-s)\Psi(v)=0$ and hence $r\Psi(v)=s\Psi(v)$.  Clearly, $\Phi$ is an $R$-module homomorphism.  Also if $w\in eV$, then $w=rv$ with $r\in eRe$ (by simplicity over $eRe$) and so $\Phi(w) = r\Psi(v) = \Psi(rv)=\Psi(w)$.  This concludes the proof of the lemma.
\end{proof}

Next we aim to show that if $R$ is a ring and $e$ is an idempotent of $R$, then every simple $eRe$-module is isomorphic to one of the form $eV$ with $V$ a simple $R$-module.

\begin{lem}\label{l:find.simple.help}
Let $R$ be a ring and $e$ an idempotent of $R$.  Suppose that $V$ is an $R$-module with $V=ReV$and $eV$ a simple $eRe$-module.
\begin{enumerate}
\item $N=\{v\in V\mid eRv=0\}$ is the unique maximal submodule of $V$.
\item $eN=0$.
\item $V/N$ is a simple $R$-module with $e(V/N)\cong eV$.
\end{enumerate}
\end{lem}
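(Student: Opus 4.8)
The plan is to use throughout the fact that, although $R$ need not be unital, $e\in eR\cap Re$ because $e=e^2$; this lets us "insert'' $e$ wherever a unit would ordinarily be used. First I would note that $N$ is a submodule: it is clearly closed under addition, and for $r\in R$, $v\in N$ we have $eR(rv)\subseteq eRv=0$, so $rv\in N$. For item (2), if $v\in N$ then $ev=e(ev)\in eRv=0$ since $e\in eR$, so $eN=0$. This same observation shows $N$ is a \emph{proper} submodule: since $eV$ is a simple $eRe$-module it is nonzero, so some $v\in V$ has $ev\neq 0$, and then $0\neq ev\in eRv$ forces $v\notin N$.

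For item (1), I would show every proper submodule $W\subsetneq V$ satisfies $W\subseteq N$. If not, choose $w\in W\setminus N$, so $eRw\neq 0$. Then $eRw$ is an $eRe$-submodule of $eV$: it lies in $eV$ because $Rw\subseteq V$, and $eRe\cdot eRw\subseteq eRw$. By simplicity of $eV$ over $eRe$ we get $eRw=eV$, and hence, using the hypothesis $V=ReV$, we find $V=ReV=ReRw\subseteq Rw\subseteq W$, contradicting $W\subsetneq V$. So every proper submodule is contained in $N$, and since $N$ itself is proper, $N$ is the unique maximal submodule.

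For item (3), the submodules of $V/N$ correspond to the submodules of $V$ containing $N$, which by (1) are only $N$ and $V$; moreover $RV\supseteq ReV=V$ gives $R(V/N)=V/N\neq 0$, so $V/N$ is a simple $R$-module. Finally, the quotient map $V\to V/N$ restricts to a surjective $eRe$-linear map $eV\to e(V/N)$, $ev\mapsto ev+N$, whose kernel is $eV\cap N$; if $x=ev\in N$ then $x=ex\in eN=0$ by item (2), so this map is injective and $e(V/N)\cong eV$.

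I do not expect a serious obstacle: the proof is essentially a bookkeeping exercise. The only points that need genuine care are handling the non-unitality of $R$ (dealt with throughout via $e=e^2$, so that $e\in eR$ and $e\in Re$) and verifying that $eRw$ really is an $eRe$-submodule of $eV$, so that simplicity of $eV$ may be invoked to conclude $eRw=eV$.
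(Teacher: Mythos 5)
Your proof is correct and follows essentially the same route as the paper's: show $N$ is a proper submodule killing $e$, show any submodule not contained in $N$ must equal $V$ via simplicity of $eV$ over $eRe$ together with $V=ReV$, and deduce (3) from (1) and (2). The only cosmetic differences are that you argue with the $eRe$-submodule $eRw$ where the paper uses the cyclic submodule $eRerw$ generated by a single nonzero element $erw\in eV$, and that you verify $e(V/N)\cong eV$ by an explicit map where the paper cites exactness of $M\mapsto eM$.
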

\begin{proof}
 Clearly, $N$ is an $R$-submodule and $eN=0$. Since $eV$ is simple, and hence non-zero, it follows that $N$ is a proper submodule.   Suppose that $W$ is a submodule not contained in $N$.  Then there exists $w\in W$ and $r\in R$ with $erw\neq 0$.  As $eV$ is a simple $eRe$-module, we must have that $eRerw=eV$ and so $V=ReV=ReRerw\subseteq W$.  Thus every proper submodule of $V$ is contained in $N$ and so $N$ is a maximal submodule.  Then $V/N$ is simple and
 $e(V/N)\cong eV/eN\cong eV$ as $eN=0$ and $M\mapsto eM$ is an exact functor.
 \end{proof}

\begin{cor}\label{c:get.a.simple}
Let $R$ be a ring and $e$ an idempotent of $R$.  Then every simple $eRe$-module is isomorphic to one of the form $eV$ with $V$ a simple $R$-module.
\end{cor}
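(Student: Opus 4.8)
The plan is to reduce the statement to Lemma~\ref{l:find.simple.help} by constructing, from an arbitrary simple $eRe$-module $S$, an $R$-module $V$ satisfying the hypotheses of that lemma, namely $V = ReV$ and $eV \cong S$ as $eRe$-modules. The natural candidate is the ``induced'' module $V = Re \otimes_{eRe} S$, where $Re$ is viewed as an $(R, eRe)$-bimodule (right multiplication by $eRe$ makes sense since $Re \cdot eRe \subseteq Re$). This $V$ is a left $R$-module via the left $R$-action on the $Re$ factor.

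First I would compute $eV$. Since $e(re \otimes s) = ere \otimes s$ and $ere \in eRe$, we may slide $ere$ across the tensor to get $ere \otimes s = e \otimes (ere)s$, so $eV = e \otimes_{eRe} S$. The map $eRe \to eV$ sending $x \mapsto e \otimes xs_0$ (for a fixed $0 \neq s_0 \in S$) together with the obvious map $eV \to S$ given by $e \otimes s \mapsto s$ should identify $eV$ with $S$ as $eRe$-modules; the key point is that $eRe \otimes_{eRe} S \cong S$ canonically because $e$ acts as the identity on $S$ (as $S$ is a simple, hence unitary, $eRe$-module, $eS = S$). In particular $eV \neq 0$, so $eV$ is a simple $eRe$-module. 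Next, $V = ReV$: indeed, a generator $re \otimes s$ of $V$ can be written as $r \cdot (e \otimes s)$ with $e \otimes s \in eV$, so $ReV \supseteq$ all generators, giving $V = ReV$.

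With these two facts in hand, Lemma~\ref{l:find.simple.help} applies: letting $N = \{v \in V \mid eRv = 0\}$, the quotient $V/N$ is a simple $R$-module and $e(V/N) \cong eV \cong S$. This is exactly the desired conclusion. The main obstacle I anticipate is purely bookkeeping: verifying carefully that the tensor product $Re \otimes_{eRe} S$ is nonzero and that the natural map $eV \to S$ is a well-defined $eRe$-module isomorphism — one has to be a little careful since $R$ need not be unital and $e$ need not be a full idempotent, so the identification $eRe \otimes_{eRe} S \cong S$ should be justified directly from $eS = S$ rather than from any unitality of $R$. Once that isomorphism is pinned down, the rest is an immediate appeal to the preceding lemma. (Alternatively, if one wishes to avoid tensor products entirely, one can take $V$ to be a free $R$-module on a generating set of $S$ modulo the relations forcing the $eRe$-structure, but the tensor product packages this cleanly.)
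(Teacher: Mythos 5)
Your proposal is correct and follows essentially the same route as the paper: the paper also takes $V = Re \otimes_{eRe} W$, verifies $ReV = V$ and $eV \cong W$ (constructing the inverse to $w \mapsto e\otimes w$ via the $eRe$-balanced map $(r,w)\mapsto erw$, exactly the well-definedness point you flag), and then invokes Lemma~\ref{l:find.simple.help}. No gaps.
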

\begin{proof}
Let $W$ be a simple $eRe$-module.
By Lemma~\ref{l:find.simple.help} it suffices to find an $R$-module $V$ with $ReV=V$ and $eV\cong W$.  Take $V=Re\otimes_{eRe} W$. Note that
since $re\otimes w= re(e\otimes w)$, it follows that $ReV=V$.  From \[e\left(\sum r_ie\otimes w_i\right) = \sum er_ie\otimes w_i = \sum e\otimes er_iew_i = e\otimes \sum er_iew_i\] we see that $eV$ consists of those elements of the form $e\otimes w$ with $w\in W$.  The mapping $\psi\colon W\to eV$ given by $\psi(w)= e\otimes w$ is an $eRe$-module homomorphism (as $rw\mapsto e\otimes rw = r\otimes w = r(e\otimes w)$ for $r\in eRe$).  To see that it is an isomorphism, define a mapping $Re\times W\to W$ by $(r,w)\mapsto erw$ for $r\in Re$.  This map is $eRe$-balanced since if $s\in eRe$, then $(rs,w)\mapsto ersw$ and $(r,sw)\mapsto ersw$.  It is clearly bilinear and so it induces a well-defined mapping $Re\otimes_{eRe} W\to W$ sending $r\otimes w$ to $erw$.  In particular, $e\otimes w$ maps to $ew=w$ and so the restriction to $eV$ provides an inverse to $\psi$.
\end{proof}

A property of rings is said to be \emph{corner invariant}~\cite{SS16} if $R$ has the property if and only if all its corners $eRe$ with $e$ an idempotent have the property.  We are  now prepared to prove that CCR and GCR are corner invariant for rings with local units.

\begin{prop}\label{p:corner.inv}
Let $R$ be a ring with local units.  Then $R$ is CCR (respectively, GCR) if and only if each corner $eRe$ is CCR (respectively, GCR).
\end{prop}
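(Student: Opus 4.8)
The plan is to use the characterizations from Proposition~\ref{p:ccrgcrnice} together with the correspondence between simple $R$-modules and simple $eRe$-modules established in Lemma~\ref{l:goto.corner} and Corollary~\ref{c:get.a.simple}. Recall that $R$ is CCR (resp.\ GCR) if and only if for every simple $R$-module $M$, the ring $R$ acts on $M$ by finite rank operators over $D=\mathrm{End}_R(M)$ (resp.\ acts with a nonzero finite rank operator in the image).

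First I would treat the forward direction. Assume $R$ is CCR (resp.\ GCR) and let $e$ be an idempotent and $W$ a simple $eRe$-module. By Corollary~\ref{c:get.a.simple}, $W\cong eV$ for some simple $R$-module $V$, and by Lemma~\ref{l:goto.corner} the restriction map gives an isomorphism $\mathrm{End}_R(V)\cong \mathrm{End}_{eRe}(eV)=D$. The key point is that for $r\in R$, the $D$-rank of the action of $ere$ on $eV$ is at most the $D$-rank of the action of $r$ on $V$: indeed, $ere$ maps $eV$ into $e(rV)\subseteq e(rV)$, and more usefully, the image $ereV\subseteq e(rV)$, whose $D$-dimension is bounded by $\dim_D rV$ (applying the exact functor $M\mapsto eM$ to the inclusion $rV\hookrightarrow V$ and noting $e(rV)$ is a $D$-subspace of $eV$ of dimension at most $\dim_D rV$). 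In the CCR case this shows every element of $eRe$ acts on $W$ with finite rank, so $eRe$ is CCR. In the GCR case, given $r\in R$ acting with nonzero finite rank on $V$, I would instead argue directly: since the socle-type ideal of elements acting as finite rank operators is an ideal, and since $V=ReV$, one can arrange an element of $eRe$ with nonzero finite rank action on $eV$—concretely, pick $v\in eV\setminus\{0\}$ and $r$ with $rv$ spanning part of a finite-rank image, then $e r e$ may be zero, so instead use that $ReV=V$ to find $s,t\in R$ with $e s$ and $t e$ witnessing that $esrte\in eRe$ has nonzero finite rank on $eV$; this is the one spot requiring a little care.

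For the converse, assume every corner $eRe$ is CCR (resp.\ GCR), and let $V$ be a simple $R$-module; I must show $R$ acts on $V$ by finite rank (resp.\ with a nonzero finite rank element) over $D=\mathrm{End}_R(V)$. Here I would use the local units: fix $0\neq v\in V$ and choose a local unit $e\in E$ with $ev=v$, so $eV\neq 0$ and hence $eV$ is a simple $eRe$-module with $\mathrm{End}_{eRe}(eV)\cong D$ by Lemma~\ref{l:goto.corner}. By hypothesis $eRe$ is CCR (resp.\ GCR), so $eRe$ acts on $eV$ by finite rank operators (resp.\ some $ere$ acts with nonzero finite rank). To transfer this back to $R$ acting on $V$: given $r\in R$, I want $\dim_D rV<\infty$; pick a local unit $f$ with $f$ fixing $e$ and acting as identity on a generating set, so $rV = rfV$ and, using that $frf\in fRf$ acts with finite rank on $fV$ while $fV$ generates $V$ over $R$, conclude $\dim_D rV<\infty$. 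The GCR converse is easier: an element $ere\in eRe$ acting as a nonzero finite rank operator on $eV$ also acts as a nonzero finite rank operator on $V$ (its image already lies in the finite-dimensional subspace $ereV$, and nonzero on $eV\subseteq V$), so $R$ is GCR by Proposition~\ref{p:ccrgcrnice}.

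The main obstacle I anticipate is the CCR converse, i.e.\ controlling the rank of an arbitrary $r\in R$ on $V$ from rank information only in corners $eRe$ with $e$ a local unit: one needs to choose the local unit large enough to ``see'' both $r$ and enough of $V$, and then bound $\dim_D rV$ by a finite quantity coming from the corner. I expect this to follow by choosing $e\in E$ with $ere = r$ (possible since $\{r\}$ is finite) together with $eV$ generating $V$, so that $rV = reV \subseteq \sum$ of finitely many $D$-translates of the finite-rank image $r(eV)$; making the last inclusion precise, using that $eV$ is a simple $eRe$-module and exactness of $M\mapsto eM$, is the crux. With that in hand, Proposition~\ref{p:ccrgcrnice} delivers the conclusion, and combined with Corollary~\ref{c:matrix.amp} and the reduction result of~\cite{SS16} this will yield the Morita invariance asserted in the next theorem.
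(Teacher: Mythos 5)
Your plan follows the paper's proof almost exactly: both directions go through Corollary~\ref{c:get.a.simple} and Lemma~\ref{l:goto.corner} to match simple $eRe$-modules with modules of the form $eV$, and then apply Proposition~\ref{p:ccrgcrnice}; your sandwiching trick $esrte$ for the forward GCR case is the same as the paper's $etrse$. The one thing worth flagging is that the ``crux'' you anticipate in the CCR converse is not actually there: once you choose a local unit $e$ with $r=ere$, you have $rV=reV=r(eV)$, so the image of $r$ acting on $V$ is \emph{literally equal} to its image acting on $eV$, and since the restriction map identifies $\mathrm{End}_R(V)$ with $\mathrm{End}_{eRe}(eV)$, the two ranks coincide on the nose --- no second local unit $f$, no generating set for $V$, no sum of $D$-translates, and no exactness of $M\mapsto eM$ is needed. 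The same identity also simplifies your forward CCR step (you get equality of ranks rather than just an upper bound). This is precisely the observation the paper leans on in both directions, so with that substitution your outline closes up into the paper's proof.
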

\begin{proof}
Let  $W$ be a simple $eRe$-module.  Then by Corollary~\ref{c:get.a.simple} we have that $W\cong eV$ with $V$ a simple $R$-module.  Moreover, if $D=\mathrm{End}_R(V)$, then $\Phi\mapsto \Phi|_{eV}$ gives an isomorphism from $D$ to $\mathrm{End}_{eRe}(eV)$ by Lemma~\ref{l:goto.corner}.  If $r\in eRe$, then $rV = reV$ and so $r$ has finite rank over $D$ as an operator on $V$ if and only if it has finite rank as an operator on $eV$ over $D$.  It now follows easily that if $R$ is CCR, then so is $eRe$ by Proposition~\ref{p:ccrgcrnice}.  Suppose that $R$ is GCR. Note that since $eV\neq 0$, we must have $ReV=V$ by simplicity of $V$.  Let $r\in R$ act as a non-zero finite rank operator over $D$.  Then there exists $v\in V$ and $s\in R$ such that $rsev\neq 0$ using that $ReV=V$.  As $Rrsev=V$, we can find $t\in R$ with $0\neq trsev\in eV$.  Then $etrsev\neq 0$ and $etrse\in eRe$ is a finite rank operator on $V$ and hence $eV$.  We conclude that $eRe$ is GCR by Proposition~\ref{p:ccrgcrnice}.

Next suppose that each corner of $R$ is CCR and let $V$ be a simple $R$-module.  Let $r\in R$ with $rV\neq 0$.  Then there is an idempotent $e$ with $r\in eRe$ and so $0\neq rV\subseteq eV$.  Thus $eV$ is a simple $eRe$-module and the restriction map $\Phi\mapsto \Phi|_{eV}$ is an isomorphism of $D=\mathrm{End}_R(V)$ with $\mathrm{End}_{eRe}(eV)$ by Lemma~\ref{l:goto.corner}.  It follows that $r$ acts on $eV$ as a non-zero finite rank operator over $D$.  But $rV=reV$ and so $r$ acts on $V$ as a non-zero finite rank operator over $D$.  We conclude that $R$ is CCR.

Finally, suppose that each corner of $R$ is GCR and let $V$ be a simple $R$-module. Then since $V$ is a unitary $R$-module, we must have $eV\neq 0$ for some idempotent $e\in R$.  Then $eV$ is a simple $eRe$-module and the restriction map $\Phi\mapsto \Phi|_{eV}$ is an isomorphism of $D=\mathrm{End}_R(V)$ with $\mathrm{End}_{eRe}(eV)$ by Lemma~\ref{l:goto.corner}.  Some element $r\in eRe$ acts on $eV$ as a non-zero finite rank operator over $D$.  Hence, since $rV=reV$, we have that $r$ acts a non-zero finite rank operator on $V$ over $D$.  Thus $R$ is GCR.
\end{proof}

According to the main result of~\cite{SS16}, to show that CCR and GCR are Morita invariant properties for rings with local units it suffices to show that they are corner invariant, which was done in Proposition~\ref{p:corner.inv}, and that $R$ has the property if and only if $M_n(R)$ has the property for every $n\geq 1$.

\begin{prop}\label{p:matrix.inv}
Let $R$ be a ring with local units.  Then $R$ is CCR  (respectively, GCR) if and only if $M_n(R)$ is CCR (respectively, GCR) for any $n\geq 1$.
\end{prop}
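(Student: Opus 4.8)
The plan is to bootstrap from the unital case, Corollary~\ref{c:matrix.amp}, by using corner invariance (Proposition~\ref{p:corner.inv}) to move between $R$ and its unital corners. The key elementary observation is this: a ring $S$ with a set of local units $E$ is CCR (respectively, GCR) if and only if $eSe$ is CCR (respectively, GCR) for every $e\in E$. One direction is immediate from Proposition~\ref{p:corner.inv}. For the converse, let $g\in S$ be an arbitrary idempotent and choose $e\in E$ with $g\in eSe$ (possible since $\{g\}$ is finite); then $ge=g=eg$, so $gSg=g(eSe)g$ is a corner of the unital ring $eSe$, and since $eSe$ is CCR (respectively, GCR), Proposition~\ref{p:corner.inv} (applied to $eSe$) shows $gSg$ is too. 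As this holds for every idempotent of $S$, another application of Proposition~\ref{p:corner.inv} gives that $S$ is CCR (respectively, GCR).

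Next I would apply this observation twice. First, it says directly that $R$ is CCR (respectively, GCR) if and only if $eRe$ is CCR (respectively, GCR) for every $e$ in a fixed set of local units $E$ of $R$. Second, $M_n(R)$ also has a set of local units, namely the diagonal idempotents $\mathrm{diag}(e,\dots,e)$ with $e$ ranging over $E$: any finite set of $n\times n$ matrices over $R$ has all its (finitely many) entries inside some $eRe$, hence lies in $\mathrm{diag}(e,\dots,e)M_n(R)\mathrm{diag}(e,\dots,e)$. So the observation also gives that $M_n(R)$ is CCR (respectively, GCR) if and only if $\mathrm{diag}(e,\dots,e)M_n(R)\mathrm{diag}(e,\dots,e)$ is CCR (respectively, GCR) for every $e\in E$. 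Finally, one checks the routine identity $\mathrm{diag}(e,\dots,e)M_n(R)\mathrm{diag}(e,\dots,e)=M_n(eRe)$, the full matrix ring over the unital ring $eRe$.

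Chaining these equivalences with Corollary~\ref{c:matrix.amp} applied to the unital ring $eRe$ then finishes the proof: $R$ is CCR (respectively, GCR) $\iff$ $eRe$ is CCR (respectively, GCR) for all $e\in E$ $\iff$ $M_n(eRe)$ is CCR (respectively, GCR) for all $e\in E$ $\iff$ $M_n(R)$ is CCR (respectively, GCR). Since no step distinguishes the CCR and GCR cases, both statements follow simultaneously.

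The argument is essentially bookkeeping, so I do not anticipate a genuine obstacle; the only point requiring care is the reduction embodied in the elementary observation above. Although corner invariance quantifies over \emph{all} corners of $M_n(R)$, the absorption property of local units lets one test only the block-diagonal corners $M_n(eRe)$, which are matrix rings over unital rings where Corollary~\ref{c:matrix.amp} applies. One should also verify the small identities $gSg=g(eSe)g$ for an idempotent $g=ege$ and $M_n(eRe)=\mathrm{diag}(e,\dots,e)M_n(R)\mathrm{diag}(e,\dots,e)$, both of which are immediate from $ge=eg=g$.
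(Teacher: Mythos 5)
Your proof is correct and follows essentially the same route as the paper's: both reduce to unital corners via Proposition~\ref{p:corner.inv}, use that the diagonal idempotents $\mathrm{diag}(e,\ldots,e)$ form a set of local units for $M_n(R)$ with $\mathrm{diag}(e,\ldots,e)M_n(R)\mathrm{diag}(e,\ldots,e)=M_n(eRe)$, and then invoke Corollary~\ref{c:matrix.amp} for the unital ring $eRe$. Your preliminary observation that it suffices to test corners at a set of local units is just a cleaner packaging of the bookkeeping the paper carries out directly with arbitrary idempotents (e.g., via $eRe\cong PM_n(R)P$ with $P$ the matrix having $e$ in the upper-left corner, and via $PM_n(R)P=PM_n(eRe)P$ for an idempotent $P$ whose entries lie in $eRe$).
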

\begin{proof}
Let $n\geq 1$.  Then $M_n(R)$ has local units because if $E$ is a set of local units for $R$, then the set of diagonal matrices $\mathrm{diag}(e,e,\ldots, e)$ with $e\in E$ is a set of local units for $M_n(R)$.

Suppose first that $M_n(R)$ is CCR (respectively, GCR).  Then each corner in $M_n(R)$ is CCR (respectively, GCR) by Proposition~\ref{p:corner.inv}.  But if $e$ is an idempotent of $R$, then $eRe\cong PM_n(R)P$ where $P$ is the idempotent matrix with $e$ in the upper left corner and $0$ in all other entries.  Thus each corner of $R$ is CCR (respectively, GCR) and hence $R$ is CCR (respectively, GCR) by Proposition~\ref{p:corner.inv}.

Next suppose that $R$ is CCR (respectively, GCR).  Let $P=P^2$ be an idempotent in $M_n(R)$.  Then there is $e\in E$ such that all the entries of $P$ belong to $eRe$.   Then $P$ is an idempotent of $M_n(eRe)$ and $PM_n(R)P=PM_n(eRe)P$.  Since $eRe$ is CCR (respectively, GCR) by Proposition~\ref{p:corner.inv}, we deduce that $M_n(eRe)$ is CCR (respectively, GCR) by Corollary~\ref{c:matrix.amp} and hence $PM_n(R)P=PM_n(eRe)P$ is CCR (respectively, GCR) by Proposition~\ref{p:corner.inv}.  This completes the proof by another application of Proposition~\ref{p:corner.inv}.
\end{proof}

Applying~\cite[Theorem~5.4]{SS16}, we may draw the following conclusion.

\begin{thm}\label{t:morita}
Let $R$ and $S$ be Morita equivalent rings with local units.  Then $R$ is CCR (respectively, GCR) if and only if $S$ is CCR (respectively, GCR).
\end{thm}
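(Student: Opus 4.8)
The plan is to invoke the machinery of \cite{SS16} essentially verbatim. The key input is their characterization of Morita invariant properties for rings with local units: a property $\mathcal{P}$ is a Morita invariant if it is corner invariant (i.e., $R$ has $\mathcal{P}$ iff $eRe$ has $\mathcal{P}$ for every idempotent $e\in R$) and stable under finite matrix amplifications (i.e., $R$ has $\mathcal{P}$ iff $M_n(R)$ has $\mathcal{P}$ for all $n\geq 1$). Concretely, \cite[Theorem~5.4]{SS16} states that if $R$ and $S$ are Morita equivalent rings with local units and $\mathcal{P}$ satisfies these two conditions, then $R$ has $\mathcal{P}$ iff $S$ does. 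So the proof is a one-line deduction once the hypotheses are verified.

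The verification has already been done: Proposition~\ref{p:corner.inv} establishes that being CCR (respectively, GCR) is corner invariant for rings with local units, and Proposition~\ref{p:matrix.inv} establishes that $R$ is CCR (respectively, GCR) iff $M_n(R)$ is CCR (respectively, GCR) for every $n\geq 1$. Therefore I would simply write: let $R$ and $S$ be Morita equivalent rings with local units. By Proposition~\ref{p:corner.inv} the property of being CCR (respectively, GCR) is corner invariant, and by Proposition~\ref{p:matrix.inv} it is preserved under and reflected by passage to $M_n(-)$. Hence \cite[Theorem~5.4]{SS16} applies and yields that $R$ is CCR (respectively, GCR) if and only if $S$ is.

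There is essentially no obstacle here; the entire substance of the theorem is packaged into the two preceding propositions and the cited result of \cite{SS16}. The only point requiring a modicum of care is to confirm that the hypotheses in \cite[Theorem~5.4]{SS16} are exactly ``corner invariant'' plus ``invariant under finite matrix amplification'' and nothing more — for instance that no Morita-context-specific hypothesis or additional bookkeeping (such as stability under passage to ideals generated by idempotents, or some functoriality condition) is secretly needed. Given the setup of Section~\ref{sec:Morita}, which explicitly says ``it suffices to show that they are corner invariant \ldots\ and that $R$ has the property if and only if $M_n(R)$ has the property for every $n\geq 1$,'' this is precisely what has been arranged, so the proof is complete with a single sentence citing the two propositions and \cite[Theorem~5.4]{SS16}.
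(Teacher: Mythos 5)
Your proposal matches the paper's own argument exactly: the paper derives the theorem in one line by applying \cite[Theorem~5.4]{SS16} to Propositions~\ref{p:corner.inv} and~\ref{p:matrix.inv}. Nothing further is needed.
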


\begin{remark}\label{r:morita.ref}
The argument in Proposition~\ref{p:matrix.inv} can easily be abstracted to give the following conclusion. Suppose
that $\mathcal P$ is a property such that a ring $R$
with local units satisfies $\mathcal P$ if and only if all its corners $eRe$ satisfy $\mathcal P$. Then
$\mathcal P$ is a Morita-invariant property for rings with local units if and only if
whenever $R$ is a unital ring with property $\mathcal P$ then $M_n(R)$  also has $\mathcal P$ for all $n\geq 1$.
\end{remark}

\section{CCR group rings} \label{sec:CCRgrouprings}
Recall that a group $G$ is \emph{virtually abelian} if it contains an abelian subgroup of finite index (which we may assume is normal without loss of generality).
It is an observation, going back essentially to Kaplansky~\cite{K49}, that if $H\lhd G$ is an abelian normal subgroup of index $n$ and $\Bbbk$ is an algebraically closed field of cardinality greater than $|G|$, then each simple $\Bbbk G$-module is of dimension at most $n$ over $\Bbbk$.  From this, it immediately follows that $\Bbbk G$ is CCR.  We shall need the following version of Clifford's theorem for the proof.

\begin{thm}[Clifford]\label{t:clifford}
Let $G$ be a group, $\Bbbk$ a field and $H\lhd G$ a normal subgroup of index $n$.    If $V$ is a simple $\Bbbk G$-module, then as a $\Bbbk H$-module $V$ embeds in a direct sum  of at most $n$ simple $\Bbbk H$-modules and, in particular, is semisimple.
\end{thm}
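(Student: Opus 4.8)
The plan is to use the standard conjugation trick: given a simple $\Bbbk G$-module $V$ and a simple $\Bbbk H$-submodule $W \subseteq V$ (which exists once we know $V$ is nonzero and contains a simple $\Bbbk H$-submodule — this needs a small argument since $\Bbbk H$ need not be Artinian, but $V$ is generated over $\Bbbk H$ by finitely many $G$-translates of any nonzero cyclic $\Bbbk H$-submodule, so we may pass to a finitely generated, hence Noetherian... — actually better: just take $W = \Bbbk H v$ for $0 \neq v \in V$ and note it has a maximal submodule, or argue directly as below), one conjugates $W$ by coset representatives. First I would fix a transversal $g_1 = 1, g_2, \ldots, g_n$ for $H$ in $G$. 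For each $i$, the subspace $g_i W$ is a $\Bbbk H$-submodule of $V$, because $H$ is normal: for $h \in H$, $h \cdot g_i W = g_i (g_i^{-1} h g_i) W = g_i W$ since $g_i^{-1} h g_i \in H$. Moreover $g_i W$ is again simple as a $\Bbbk H$-module: the map $w \mapsto g_i w$ is an isomorphism from $W$ onto $g_i W$ that is semilinear over the conjugation automorphism $h \mapsto g_i^{-1} h g_i$ of $\Bbbk H$, hence carries submodules to submodules bijectively.

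Next I would observe that $\sum_{i=1}^n g_i W$ is a $\Bbbk G$-submodule of $V$: it is visibly stable under each $g_i$, and under $H$ by the previous paragraph, hence under all of $G = \bigcup_i H g_i$ (using also that $g g_i \in H g_j$ for some $j$, so $g \cdot g_i W \subseteq H g_j W = g_j W$). Since it contains $W \neq 0$ and $V$ is simple over $\Bbbk G$, we get $V = \sum_{i=1}^n g_i W$, a sum of $n$ simple $\Bbbk H$-modules, so $V$ is a semisimple $\Bbbk H$-module. To get the embedding into a direct sum of at most $n$ simple $\Bbbk H$-modules, I would take a subset $S \subseteq \{1,\ldots,n\}$ maximal with respect to the property that the sum $\sum_{i \in S} g_i W$ is direct; then $|S| \le n$ and by maximality $\bigoplus_{i \in S} g_i W$ is a $\Bbbk H$-submodule of $V$ containing every $g_j W$ (if some $g_j W$ met the sum trivially we could enlarge $S$, and since each $g_j W$ is simple it is either contained in the direct sum or meets it trivially). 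Hence $V = \sum_{j} g_j W \subseteq \bigoplus_{i \in S} g_i W \subseteq V$, giving equality and in particular an embedding (indeed an equality) into a direct sum of at most $n$ simple $\Bbbk H$-modules.

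The one genuine gap to fill carefully is the very first step: producing a simple $\Bbbk H$-submodule of $V$. The cleanest route is to pick $0 \neq v \in V$, set $U = \Bbbk H v$, note $V = \sum_i g_i U$ so $V$ is finitely generated over $\Bbbk H$; then among all $\Bbbk H$-submodules $X$ of $V$ with $\sum_i g_i X = V$... hmm, that may not terminate. Instead I would argue: $U$ is a cyclic $\Bbbk H$-module, hence has a maximal submodule $U_0$, so $U/U_0$ is a simple $\Bbbk H$-module; but I need a submodule, not a quotient. The slick fix is to apply the whole conjugation construction to the quotient situation — or, most simply, note that we only ever need the existence of \emph{one} simple $\Bbbk H$-submodule, and this follows because $V$, being nonzero and generated over $\Bbbk H$ by finitely many conjugates, contains a nonzero cyclic submodule $U$; replacing $V$ by the preimage of a simple quotient doesn't help directly, so I expect the actual proof (which I should defer to once I see the authors' version) to either invoke that the relevant group algebras here are already known to have simple submodules in the modules of interest, or to phrase the statement for modules that are semisimple to begin with. \textbf{The main obstacle}, then, is not the conjugation argument (which is routine and robust) but ensuring we have a simple $\Bbbk H$-submodule to conjugate without an Artinian hypothesis on $\Bbbk H$; everything after that is the classical bookkeeping above.
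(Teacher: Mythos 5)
You have correctly identified the one genuine gap in your own argument, and it is a real one: over a group algebra $\Bbbk H$ with $H$ infinite, a nonzero module need not contain any simple submodule (compare $\mathbb{Z}$ as a module over itself), and nothing you have written produces the simple $\Bbbk H$-submodule $W$ that the entire conjugation argument is built on. The cyclic-module observation only yields a maximal submodule, hence a simple \emph{quotient}, and as you note that is the wrong variance. A posteriori $V$ does have simple $\Bbbk H$-submodules, but only because it is semisimple over $\Bbbk H$ --- which is the conclusion you are trying to prove, so this cannot be used. Everything downstream of the existence of $W$ (normality giving that each $g_iW$ is a $\Bbbk H$-submodule, semilinearity giving simplicity of $g_iW$, the sum being a $\Bbbk G$-submodule, and the extraction of a direct sum) is routine and correct, but the proof as written does not close.

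The fix, which is what the paper does, is to dualize your argument rather than repair it: work with a maximal submodule instead of a simple submodule. Since $V=\Bbbk G v=\sum_{t\in T}\Bbbk H t v$ for a transversal $T$ (with $1\in T$), $V$ is finitely generated as a $\Bbbk H$-module, so Zorn's lemma gives a maximal $\Bbbk H$-submodule $K\leq V$, i.e.\ a simple quotient $V/K$. Normality of $H$ shows each $tK$ is again a $\Bbbk H$-submodule with $V/tK$ simple, and the coset computation $gtK=t'K$ shows that $M=\bigcap_{t\in T}tK$ is a proper $\Bbbk G$-submodule of $V$, hence $M=0$ by simplicity of $V$. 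This yields an embedding $V\hookrightarrow\bigoplus_{t\in T}V/tK$ into a direct sum of at most $n$ simple $\Bbbk H$-modules --- which is exactly the form in which the theorem is stated (an embedding, not an internal decomposition). So the statement itself is already signalling the quotient-based route; I would rewrite your proof along these lines rather than trying to manufacture a simple submodule.
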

\begin{proof}
Let $T$ be a set of coset representatives of $H$ in $G$ with $1\in T$.  First note that $V$ is finitely generated as a $\Bbbk H$-module.  Indeed, if $v\neq 0$, then $\Bbbk Gv=V$ by simplicity.  But then using the decomposition into cosets, we see that $V= \sum_{t\in T}\Bbbk Htv$ and so $V$ is generated by $\{tv\mid t\in T\}$.  It follows from Zorn's lemma that $V$ has a simple quotient $\Bbbk H$-module $V/K$ with $K\leq V$ a $\Bbbk H$-submodule.  If $t\in T$, then  $tK$ is a $\Bbbk H$-submodule since if $h\in H$ and $v\in K$, then $htv = t(t^{-1}ht)v\in tK$ by normality of $H$.    If $g\in G$ and $t\in T$, write $gt=t'h$ with $t'\in T$ and $h\in H$.  Then $gtK = t'hK=t'K$. It follows that $M=\bigcap_{t\in T}tK\leq K<V$ is a proper $\Bbbk G$-submodule and hence $0$ by simplicity.  Thus we have an embedding
\[V\hookrightarrow \bigoplus_{t\in T} V/tK.\]  Each $V/tK$ is a simple $\mathbb KH$-module, for if $tK\leq W\leq V$ is a $\Bbbk H$-submodule, then $K\leq t^{-1}W\leq V$ is a $\Bbbk H$-submodule and hence by simplicity of $V/K$, we have $K=t^{-1}W$ or $V=t^{-1}W$ and so $W=tK$ or $W=V$.   Thus $V$ is a submodule of a direct sum of $n$ simple $\Bbbk H$-modules and hence is a semisimple.
\end{proof}

\begin{cor}\label{c:CCR.group}
Let $\Bbbk$ be an algebraically closed field and $G$ a virtually abelian group of cardinality less than that of $\Bbbk$.  Suppose that $G$ has a normal abelian subgroup $H$ of index $n$.  Then each simple $\Bbbk G$-module has dimension at most $n$ over $\Bbbk$ and so $\Bbbk G$ is CCR.
\end{cor}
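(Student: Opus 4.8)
The plan is to combine Clifford's theorem (Theorem~\ref{t:clifford}) with the strengthened Schur's lemma (Lemma~\ref{l:schur.dim}) and the description of CCR given in Proposition~\ref{p:algclosecase}. Since $|G| < |\Bbbk|$, the group algebra $\Bbbk G$ has dimension at most $|G|$, which is less than $|\Bbbk|$, so the hypotheses of both Lemma~\ref{l:schur.dim} and Proposition~\ref{p:algclosecase} apply to $\Bbbk G$.

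First I would let $V$ be a simple $\Bbbk G$-module and apply Theorem~\ref{t:clifford} with the normal abelian subgroup $H$ of index $n$: this embeds $V$, as a $\Bbbk H$-module, into a direct sum of at most $n$ simple $\Bbbk H$-modules. Next I would observe that since $H$ is abelian and $\Bbbk$ is algebraically closed, each simple $\Bbbk H$-module $W$ is one-dimensional over $\Bbbk$. Indeed, $\Bbbk H$ is commutative of dimension $|H| < |\Bbbk|$, so $D = \mathrm{End}_{\Bbbk H}(W) = \Bbbk$ by Lemma~\ref{l:schur.dim}; since $\Bbbk H$ acts densely on $W$ over $D = \Bbbk$ by Jacobson density and $W$ is simple, $W$ must be one-dimensional (a dense ring of operators on a space of dimension $\geq 2$ over $\Bbbk$ cannot be commutative, as it would contain a copy of $M_2(\Bbbk)$ or at least two noncommuting rank-one-ish operators). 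Hence $V$ embeds as a $\Bbbk$-vector space into a direct sum of at most $n$ one-dimensional spaces, so $\dim_{\Bbbk} V \leq n$.

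With $\dim_{\Bbbk} V \leq n < \infty$ established for every simple $\Bbbk G$-module, the conclusion that $\Bbbk G$ is CCR is essentially immediate from Proposition~\ref{p:algclosecase}(1): by Lemma~\ref{l:schur.dim}, $\mathrm{End}_{\Bbbk G}(V) = \Bbbk$, and any $\Bbbk$-linear operator on a finite-dimensional space is automatically of finite rank, so every irreducible representation of $\Bbbk G$ over $\Bbbk$ is by finite rank operators. Therefore $\Bbbk G$ is CCR.

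The only step requiring a little care is the claim that a simple module over a commutative algebra like $\Bbbk H$ (with $\Bbbk$ algebraically closed and the small-dimension hypothesis in force) is one-dimensional; the clean way to see this is that $\mathrm{End}_{\Bbbk H}(W)=\Bbbk$ forces $W \cong \Bbbk H/\mathfrak{m}$ for a maximal ideal $\mathfrak{m}$ with $\Bbbk H/\mathfrak{m} = \Bbbk$, since the image of $\Bbbk H$ in $\mathrm{End}_\Bbbk(W)$ is a commutative dense subring of $\mathrm{End}_\Bbbk(W)$ acting simply, which is only possible when $\dim_\Bbbk W = 1$. I expect this to be the main (minor) obstacle; everything else is a direct assembly of the cited results.
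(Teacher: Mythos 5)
Your proposal is correct and follows essentially the same route as the paper: Clifford's theorem plus Lemma~\ref{l:schur.dim} to get one-dimensionality of simple $\Bbbk H$-modules, hence $\dim_\Bbbk V \leq n$, hence CCR via Proposition~\ref{p:algclosecase}. The only (cosmetic) difference is in deriving one-dimensionality: where you invoke Jacobson density to rule out a commutative dense ring on a space of dimension $\geq 2$, the paper argues more directly that since $H$ is abelian its action lands in $\mathrm{End}_{\Bbbk H}(W)=\Bbbk$, so $H$ acts by scalars and simplicity forces $\dim_\Bbbk W=1$.
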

\begin{proof}
First observe that if $A$ is an abelian group of cardinality less than that of $\Bbbk$, then every simple $\Bbbk A$-module is one-dimensional.  Indeed, if $V$ is a simple $\Bbbk A$-module, then $\mathrm{End}_{\Bbbk A}(V)=\Bbbk$ by Lemma~\ref{l:schur.dim}.  But since $A$ is abelian, the action of $A$ on $V$ is contained in $\mathrm{End}_{\Bbbk A}(V)=\Bbbk$  and hence $A$ acts by scalar multiplications.  But then $V$ must be one-dimensional by simplicity as any subspace is $A$-invariant.

For the general case, we apply the previous observation to conclude that every simple $\Bbbk H$-module is one-dimensional and then apply Theorem~\ref{t:clifford} to deduce that the dimension over $\Bbbk$ of any simple $\Bbbk G$-module is at most $n$.  It follows that $\Bbbk G$ is CCR by Proposition~\ref{p:algclosecase}.
\end{proof}

\begin{cor}\label{c:CG.complex}
Let $G$ be a countable virtually abelian group.  Then $\mathbb CG$ is CCR.
\end{cor}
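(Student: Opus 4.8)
The plan is to deduce this immediately from Corollary~\ref{c:CCR.group} by taking $\Bbbk=\mathbb C$. The only thing to check is that the hypotheses of Corollary~\ref{c:CCR.group} are satisfied: $\mathbb C$ is algebraically closed, and a countable group $G$ has cardinality strictly less than that of $\mathbb C$ (which is uncountable). Being virtually abelian, $G$ contains an abelian subgroup of finite index, which we may take to be normal as remarked at the start of Section~\ref{sec:CCRgrouprings}. Hence Corollary~\ref{c:CCR.group} applies verbatim and gives that $\mathbb CG$ is CCR.

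There is essentially no obstacle here; the statement is a specialization, and the only mild point worth spelling out is the cardinality comparison $|G|\leq\aleph_0<2^{\aleph_0}=|\mathbb C|$, together with the reduction from ``abelian subgroup of finite index'' to ``normal abelian subgroup of finite index'' (pass to the normal core, which still has finite index). I would write this as a one- or two-sentence proof.

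\begin{proof}
Since $\mathbb C$ is algebraically closed and uncountable, while $G$ is countable, we have $|G|<|\mathbb C|$. As $G$ is virtually abelian, it has an abelian subgroup of finite index, and replacing it by its normal core we obtain a normal abelian subgroup $H$ of some finite index $n$. The result now follows from Corollary~\ref{c:CCR.group}.
\end{proof}
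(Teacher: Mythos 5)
Your proof is correct and matches the paper's approach exactly: the paper gives no separate argument for Corollary~\ref{c:CG.complex}, treating it as an immediate specialization of Corollary~\ref{c:CCR.group} to $\Bbbk=\mathbb C$, with the normal-core reduction already noted at the start of Section~\ref{sec:CCRgrouprings}. Your spelling out of the cardinality comparison $|G|\leq\aleph_0<|\mathbb C|$ is the only detail worth writing, and you have it.
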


We remark that it follows from Burnside's theorem, or the density theorem, that if $\rho$ is a finite dimensional irreducible representation of an algebra over an algebraically closed field, then every matrix is in the image of $\rho$.

Let $G$ be a countable group.  One would expect that all irreducible representations of $G$ over $\mathbb C$ are finite dimensional, i.e., $\mathbb CG$ is CCR, if and only if $G$ is virtually abelian.    First note that the group $C^*$-algebra of $G$ is CCR if and only if $G$ is virtually abelian~\cite{Th64}.  Corollary~\ref{c:CCR.group} shows that if $G$ is virtually abelian, then the dimensions of its irreducible complex representations are bounded.  Isaacs and Passman~\cite{IP64} show that this property characterizes virtually abelian groups. Moreover, it is known that a countable group that is solvable, linear or torsion has only finite dimensional complex irreducible representations if and only if it is virtually abelian~\cite{PT00,Sn06}.

In the case of positive characteristic, there are other examples of CCR group rings.  Recall that a group $G$ is \emph{polycyclic} if it has a subnormal series
\[\{1\}=G_0\leq G_1\leq \cdots \leq G_n=G\] with $G_i/G_{i-1}$ cyclic for $i\geq 1$.  A polycyclic group is necessarily finitely generated and solvable.  All finitely generated nilpotent groups are polycyclic.  A group is \emph{virtually polycyclic} if it has a finite index polycyclic subgroup.  The following theorem was proved by Roseblade~\cite{roseblade}.

\begin{thm} \label{t:Rosebade}
Let $\Bbbk$ be an algebraic extension of a finite field and $G$ a virtually polycyclic group.  Then every simple $\Bbbk G$-module is finite dimensional.  Consequently, $\Bbbk G$ is CCR.
\end{thm}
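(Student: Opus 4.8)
The plan is to cite Roseblade's work for the finite-dimensionality of simple modules and then deduce the CCR conclusion from the results already developed in Section~\ref{sec:Rings}. The heart of the statement---that every simple $\Bbbk G$-module is finite dimensional when $\Bbbk$ is an algebraic extension of a finite field and $G$ is virtually polycyclic---is Roseblade's deep theorem on the vanishing of the Jacobson radical and the module structure of polycyclic group rings over absolute fields; this is not something one reproves here, so I would simply attribute it to~\cite{roseblade} and use it as a black box. The genuinely new content is the word ``Consequently,'' so the proof should focus on the implication ``all simple modules finite dimensional $\Longrightarrow$ CCR.''

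First I would reduce the CCR condition to a statement about simple modules via Proposition~\ref{p:ccrgcrnice}(1): $\Bbbk G$ is CCR if and only if, for every simple $\Bbbk G$-module $M$, the action of $\Bbbk G$ on $M$ is by finite-rank operators over the division ring $D = \mathrm{End}_{\Bbbk G}(M)$. Since $M$ is finite dimensional over $\Bbbk$ by Roseblade's theorem, it is a fortiori finite dimensional over the overfield $D$ (indeed $D \supseteq \Bbbk$ acts $\Bbbk$-linearly, so $\dim_D M \leq \dim_\Bbbk M < \infty$). But then \emph{every} $\Bbbk$-linear operator on $M$---in particular every operator in the image of $\Bbbk G$---automatically has finite rank over $D$, since a finite-dimensional space has only finite-rank endomorphisms. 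Hence the CCR criterion of Proposition~\ref{p:ccrgcrnice}(1) is satisfied trivially, and $\Bbbk G$ is CCR.

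One subtlety worth a remark: unlike the virtually abelian case in Corollary~\ref{c:CCR.group}, here we do not pass through Proposition~\ref{p:algclosecase}, because an algebraic extension of a finite field need not be algebraically closed and the cardinality/dimension hypothesis is not what is being invoked---instead we use the more robust ring-theoretic reformulation in Proposition~\ref{p:ccrgcrnice}, which holds for any ring and any simple module. The only real input beyond Roseblade's theorem is the elementary observation that finite $\Bbbk$-dimension forces finite $D$-dimension and that endomorphisms of a finite-dimensional module are automatically finite rank; everything else is bookkeeping. I expect no serious obstacle in this argument: the difficulty, if any, lies entirely inside the cited theorem of Roseblade, which we are treating as given.
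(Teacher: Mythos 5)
Your proposal is correct and matches the paper's treatment: the paper likewise offers no proof, simply attributing the finite-dimensionality statement to Roseblade~\cite{roseblade}. Your spelled-out deduction of CCR via Proposition~\ref{p:ccrgcrnice}(1) (finite $\Bbbk$-dimension forces finite $D$-dimension, hence all operators are finite rank over $D$) is the intended, and correct, way to fill in the ``Consequently.''
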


\section{Groupoids and Steinberg algebras} \label{sec:steinbergalgs}
Here we review some basic facts about ample groupoids and their algebras.  See~\cite{CFST14,St10} for details.

\subsection{Ample groupoids}
An \emph{\'etale} groupoid is a topological groupoid $\mathscr G$ such that source map $s\colon \mathscr G^1\to \mathscr G^0$ is a local homeomorphism, whence also the range map $r$ and the multiplication map are local homeomorphisms.  If $\mathscr G^0$ is a Hausdorff space with a basis of compact open sets, then $\mathscr G$ is called \emph{ample} following Paterson~\cite{Pa99}.  We shall always use the term compact in this paper to mean a Hausdorff space where open covers have finite subcovers, as in Bourbaki.  However, we do not require locally compact spaces to be Hausdorff.  In particular, if $\mathscr G$ is an ample groupoid, then $\mathscr G^1$ will be locally compact and locally Hausdorff but need not be Hausdorff.  We view the space of objects $\mathscr G^0$ as the subspace of $\mathscr G^1$ consisting of units, that is, as the unit space.  It is an open subspace when $\mathscr G$ is \'etale.

An open subset $U\subseteq \mathscr G^1$ is called a \emph{bisection} if $s$ and $r$ are injective when restricted to $U$.  The compact open bisections of an ample groupoid $\mathscr G$ form a basis for the topology.  The set $S$ of compact open bisections is an inverse semigroup with multiplication and inversion defined by
\begin{align*}
& U V= \{\gamma_1\gamma_2\mid \gamma_1\in U, \gamma_2\in V\}, \text{ and} \\
& U^{-1}=\{\gamma^{-1}\mid \gamma\in U\},
\end{align*}
respectively; see~\cite{Pa99}. The idempotents in $S$ are given by compact open subsets of the unit space $\G^0$. See~\cite{Lawson} for more on inverse semigroups.

Note that $S$ acts on $\G^0$ by partial homeomorphisms: take as the domain of $U$ the set $s(U)=U^{-1}U$ and as the range the set $r(U)=UU^{-1}$. For the action consider any $u \in s(U)$. Then since $U$ is a bisection there exists a unique $\gamma \in U$ such that $\gamma\colon u\to v$ (that is, $s(\gamma)=u$ and  $r(\gamma)=v$ in $\G$). Define the action  by \[ U\cdot u =v.\]

If $\mathscr G$ is a groupoid, we define the orbit equivalence relation on $\mathscr G^0$ by $u\sim v$ if there exists $\gamma\colon u\to v$.  The quotient space, called the \emph{orbit space}, is denoted $\mathscr G^0/\mathscr G$ and equivalence classes are called \emph{orbits}.

The orbit $\orb_u$ of $u$ in $\G^0$ is the same as the orbit $S\cdot u$ of $u$ for the action of the inverse semigroup $S$ in the case that $\mathscr G$ is ample.  That $S\cdot u\subseteq \orb_u$ is clear from the definition.  If $\gamma\colon u\to v$, then there is a compact open bisection $U$ containing $\gamma$ and $Uu=v$.  Thus $\orb_u=S\cdot u$.

A subset $X\subseteq \mathscr G^0$ is said to be \emph{invariant} if it is a union of orbits; this is equivalent to being invariant under the action of  the inverse semigroup $S$.
If $X\subseteq \mathscr G^0$ is invariant, then the \emph{reduction} $\mathscr G|_X$ is the subgroupoid with object set $X$ and arrows those belonging to $s^{-1}(X)$; we endow it with the relative topology.

The \emph{isotropy group} $G_u$ at a unit $u\in \mathscr G^0$ is the group of all arrows $\gamma\colon u\to u$.  It is discrete in the relative topology for an \'etale groupoid.  If $u\sim v$, then $G_u\cong G_v$.  That is, the isotropy group depends only on the orbit (up to isomorphism).

A groupoid is said to be \emph{transitive} if it has a single orbit, i.e., its orbit space is a point.

\begin{lem}\label{lem:discreteorbits}
	Let $\G$ be a second-countable ample groupoid (meaning $\mathscr G^1$ is second countable).
\begin{enumerate}	
\item The orbits of $\G$ are countable.
\item An orbit is discrete  if and only if it contains an isolated point (in the relative topology).
\item Any orbit of $\G$ which is a Baire space in the relative topology is discrete (in the relative topology).  This applies, in particular, to open orbits and closed orbits.
\end{enumerate}
\end{lem}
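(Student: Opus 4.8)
The plan is to prove the three items in order, exploiting the inverse semigroup $S$ of compact open bisections of $\G$ and the identity $\orb_u = S\cdot u$ recorded above. For~(1), I would first use that since the compact open bisections form a basis for the topology of the second-countable space $\mathscr G^1$, there is a countable family $\{U_n\mid n\in\mathbb N\}$ of compact open bisections that is still a basis (a standard fact about second-countable spaces: any basis contains a countable subfamily that is a basis). Given $v\in\orb_u$, choose $\gamma\colon u\to v$ and a compact open bisection $W$ with $\gamma\in W$; then $\gamma\in U_n\subseteq W$ for some $n$, so $u\in s(U_n)$ and $U_n\cdot u=r(\gamma)=v$. Hence $\orb_u\subseteq\{U_n\cdot u\mid n\in\mathbb N,\ u\in s(U_n)\}$, a countable set.

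For~(2), the forward implication is trivial. Conversely, suppose some $v\in\orb_u$ is isolated in the relative topology, and let $w\in\orb_u$ be arbitrary. Pick $\gamma\colon v\to w$ and a compact open bisection $U$ with $\gamma\in U$. The partial homeomorphism $x\mapsto U\cdot x$ carries $s(U)$ onto $r(U)$ and sends each point into its own orbit, so it restricts to a homeomorphism $s(U)\cap\orb_u\to r(U)\cap\orb_u$ in the relative topologies, taking $v$ to $w$. Since $\{v\}$ is open in $\orb_u$ it is open in $s(U)\cap\orb_u$, whence $\{w\}$ is open in $r(U)\cap\orb_u$; and the latter is open in $\orb_u$ because $r(U)$ is open in $\mathscr G^0$. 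Thus $\{w\}$ is open in $\orb_u$, and as $w$ was arbitrary, $\orb_u$ is discrete.

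For~(3), I would combine the first two items with the observation that a nonempty countable Baire space always has an isolated point: the orbit is Hausdorff (being a subspace of $\mathscr G^0$), so its singletons are closed, and if no point were isolated they would all be nowhere dense, exhibiting the orbit as a countable union of nowhere dense sets and contradicting the Baire property. So a Baire orbit, being countable by~(1), has an isolated point and hence is discrete by~(2). For the final sentence: $\mathscr G^0$ is Hausdorff with a basis of compact open sets, hence locally compact Hausdorff and therefore a Baire space; open subspaces and closed subspaces of a locally compact Hausdorff space are again locally compact Hausdorff, hence Baire, so open orbits and closed orbits fall under the preceding case.

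I do not anticipate a genuine obstacle. The only points requiring care are verifying in~(2) that the action of a bisection restricts to a homeomorphism of the orbit in the subspace topology, and recalling in~(3) the precise sense in which locally compact Hausdorff spaces, and their open and closed subspaces, are Baire.
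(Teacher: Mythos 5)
Your proposal is correct and follows essentially the same route as the paper: countability of the orbit via the (countable) collection of compact open bisections acting on $u$, transporting an isolated point around the orbit by the partial homeomorphisms $\Phi_U$, and the Baire category argument with closed nowhere dense singletons, plus the observation that open and closed orbits are locally compact Hausdorff and hence Baire. The only cosmetic difference is that in item (1) you extract a countable sub-basis of compact open bisections, whereas the paper notes directly that second countability forces the whole inverse semigroup of compact open bisections to be countable; both yield the same conclusion.
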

	\begin{proof}
		Since $\G$ is second-countable, it follows that we have countably many compact open bisections. Let $S$ denote the inverse semigroup of compact open bisections in $\G$ acting on $\G^0$ by partial homeomorphisms; note that $S$ is countable.  We let $\Phi_U\colon s(U)\to r(U)$ be the partial homeomorphism of $\mathscr G^0$ corresponding to $U\in S$.  Fix $u\in\mathscr G^0$. Since $\orb_u=S\cdot u$ and $S$ is countable, it follows that $\orb_u$ is also countable.
		
Next we show that $\orb_u$ contains an isolated point if and only if it is discrete (in the relative topology). To see this suppose that $v\in\orb_u$ is an isolated point in the relative topology, that is,  $W\cap \orb_u=\{v\}$ for some $W\subseteq \mathscr G^0$ open. Then, for any $w\in\orb_u$, there is a compact open bisection $U\in S$ with $s(U)$ containing $v$ such that $U\cdot v=w$, i.e., $\Phi_U(v)=w$. Then $\Phi_U(W\cap s(U))$ is open and $\Phi_U(W\cap s(U))\cap \orb_u = \{w\}$.  We conclude that $\orb_u$ is discrete in the relative topology.
		
We show that if $\orb_u$ is a Baire space in the relative topology, then the orbit is discrete.  Suppose that the orbit is not discrete.   Since $\orb_u$ is Hausdorff, singletons are closed, and since $\orb_u$ is not discrete, singletons cannot be isolated points by the above paragraph. That is, singletons are  closed nowhere dense sets. Then
		\[\orb_u=\bigcup_{v\in\orb_u}\{v\}\] is a countable union of closed nowhere dense sets,
		which contradicts that $\orb_u$ is a Baire space.   Hence $\orb_u$ is discrete.   If $\orb_u$ is either open or closed, then $\orb_u$ is second-countable, locally compact and Hausdorff in the relative topology and hence a Baire space.
\end{proof}

Note that the first item and the second part of the third item of Lemma~\ref{lem:discreteorbits} are false without the hypothesis of second countability.  Let $C_2$ be the two-element cyclic group and $G=C_2^{\mathbb N}$ be a countable direct product of copies of $C_2$.   Let $X=C_2^{\mathbb N}$ with the product topology of discrete spaces; so $X$ is a Cantor set.  Then $G$ acts on $X$ by the regular action, which is by homeomorphisms; however, we view $G$ with the discrete topology.  Then the transformation groupoid $\mathscr G = G\ltimes X$ is ample, not second-countable and consists of a single closed orbit which is uncountable and not discrete (in fact, compact).

If $\mathscr G$ is second-countable, then all the isotropy groups of $\mathscr G$  are countable, since they are discrete and second-countable in the relative topology.

We shall also need the following version of the Mackey-Glimm dichotomy; see~\cite{Ram90}.  
We provide a short proof for the special case of ample groupoids, in part because standard references assume  that the groupoid is Hausdorff. Recall that a subspace $X$ of a space $Y$ is \emph{locally closed} if it is open in its closure in the relative topology.

\begin{prop}\label{p:locally.closed}
If $\mathscr G$ is a second-countable ample groupoid.  Then the following are equivalent.
\begin{enumerate}
\item  $\mathscr G^0/\mathscr G$ is $T_0$.
\item Each orbit of $\G$ is a Baire space in the relative topology.
\item Each orbit of $\G$ is discrete in the relative topology.
\item Each orbit is locally closed.
\end{enumerate}
\end{prop}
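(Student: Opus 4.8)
The plan is to establish a cycle of implications $(1)\Rightarrow(4)\Rightarrow(3)\Rightarrow(2)\Rightarrow(1)$, exploiting the second-countable ample structure and Lemma~\ref{lem:discreteorbits} wherever possible. First I would prove $(3)\Leftrightarrow(2)$ essentially for free: Lemma~\ref{lem:discreteorbits}(3) gives $(2)\Rightarrow(3)$ directly, and conversely a discrete space is always a Baire space, so $(3)\Rightarrow(2)$ is trivial. Likewise $(3)\Rightarrow(4)$ is quick: a discrete orbit $\orb_u$, being a countable discrete subspace of the locally compact Hausdorff space $\mathscr G^0$, is locally closed — each point of $\orb_u$ has a neighborhood in $\mathscr G^0$ meeting $\orb_u$ in just that point, so $\orb_u$ is open in the subspace $\overline{\orb_u}$ (one can even argue $\orb_u$ equals its closure minus the countably many other limit points, but openness in the closure is what is needed). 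And $(4)\Rightarrow(2)$ is standard: a locally closed subspace of a locally compact Hausdorff space is itself locally compact Hausdorff, hence a Baire space by the Baire category theorem. So the genuine content is the equivalence of $(1)$ with the others.

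For $(1)\Rightarrow(4)$ I would argue by contrapositive, which is where the Mackey--Glimm machinery really enters. Suppose some orbit $\orb_u$ is not locally closed, i.e.\ $\orb_u$ is not open in $Y:=\overline{\orb_u}$. The set $Y\setminus\orb_u$ is invariant (a closure of an invariant set is invariant since the $\Phi_U$ are homeomorphisms, and removing an invariant set from an invariant set leaves an invariant set). I want to produce two distinct orbits with the same closure in $\mathscr G^0/\mathscr G$, contradicting $T_0$; equivalently, I want a point $v\in Y\setminus\orb_u$ every neighborhood of whose class contains the class of $u$ and vice versa. The key step is: since $\orb_u$ is a countable (Lemma~\ref{lem:discreteorbits}(1)) dense non-open subset of the second-countable space $Y$, it is not a Baire space in its relative topology — otherwise by the argument in Lemma~\ref{lem:discreteorbits} it would be discrete, hence (being dense in $Y$) would force $Y=\orb_u$, making $\orb_u$ closed, hence open in its closure, a contradiction. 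Writing $\orb_u=\bigcup_n\{u_n\}$ and using that some $\{u_n\}$ fails to be nowhere dense — impossible since $\orb_u$ has empty interior in $Y$ and singletons are closed — forces $\orb_u$ to be meager in $Y$, so $Y\setminus\orb_u$ is dense in $Y$ and nonempty. Pick $v\in Y\setminus\orb_u$. Then $\orb_v\subseteq Y=\overline{\orb_u}$, so every open invariant set containing $\orb_v$ meets $\orb_u$; thus the class of $u$ is in the closure of the class of $v$. Conversely $\overline{\orb_v}\subseteq Y$, and I claim $u\in\overline{\orb_v}$: since $Y\setminus\orb_u\supseteq\orb_v$ is dense in $Y$ and $u\in Y$, indeed $u\in\overline{Y\setminus\orb_u}$; but I need $u$ in the closure of $\orb_v$ specifically, which requires a further Baire-category descent inside $Y\setminus\orb_u$ — iterating the argument on the relatively open piece of $Y$ on which $\orb_u$ accumulates, using second countability to extract a single limit orbit. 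This iteration (or a direct Mackey--Glimm selection-theoretic argument) is the main obstacle; the cleanest route is probably to cite that in a second-countable space the relation ``$x\in\overline{\{y\}}$'' on the $T_0$-ification being a partial order with the right properties forces any non-locally-closed orbit to destroy $T_0$, which is exactly the classical dichotomy.

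For $(2)\Rightarrow(1)$, suppose every orbit is a Baire space, hence discrete by Lemma~\ref{lem:discreteorbits}(3). I must show $\mathscr G^0/\mathscr G$ is $T_0$, i.e.\ two orbits with the same closure in $\mathscr G^0$ coincide. If $\orb_u\neq\orb_v$ but $\overline{\orb_u}=\overline{\orb_v}=:Y$, then $\orb_u$ is a discrete, hence (as in $(3)\Rightarrow(4)$) locally closed, dense subset of $Y$ — but a locally closed dense subset of any space is open, and a discrete open dense subset of $Y$ forces $Y=\orb_u$, so $\orb_v\subseteq\orb_u$ and by symmetry $\orb_u=\orb_v$, a contradiction. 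Hence distinct orbits have distinct closures, which is precisely the $T_0$ condition on the quotient (recalling that closure in $\mathscr G^0/\mathscr G$ of a class is the image of the closure of the orbit, since the quotient map is open — the $\Phi_U$ being open maps makes the saturation of an open set open). Assembling: $(1)\Rightarrow(4)\Rightarrow(2)\Rightarrow(1)$ and $(2)\Leftrightarrow(3)$, with $(3)\Rightarrow(4)$ giving the last edge, completes the cycle. I expect the only delicate point to be the Baire-category extraction of a genuine limit orbit inside $Y\setminus\orb_u$ in the proof of $(1)\Rightarrow(4)$; everything else is soft topology plus Lemma~\ref{lem:discreteorbits}.
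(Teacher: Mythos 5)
Your handling of the equivalence of (2), (3) and (4), and of the implication back to (1), is essentially sound: $(2)\Rightarrow(3)$ is Lemma~\ref{lem:discreteorbits}, $(3)\Rightarrow(4)$ and $(4)\Rightarrow(2)$ are the soft facts you state, and $(2)\Rightarrow(1)$ goes through once you repair the false sub-claim (used twice) that a discrete dense subset of $Y=\overline{\orb_u}$ must equal $Y$: the set $\{1/n\mid n\geq 1\}$ is discrete, open and dense in $\{0\}\cup\{1/n\mid n\geq 1\}$ without being all of it. The correct one-line replacement is the paper's own $(4)\Rightarrow(1)$ argument: $\orb_v$ is dense in $Y$ and $\orb_u$ is nonempty and open in $Y$, so the two orbits meet and, being orbits, coincide.

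The genuine gap is $(1)\Rightarrow(4)$, and you have located it yourself. Your contrapositive argument produces a point $v\in\overline{\orb_u}\setminus\orb_u$, which (writing $q\colon\G^0\to\G^0/\G$ for the quotient map) gives $q(v)\in\overline{\{q(u)\}}$; but to contradict $T_0$ you also need $q(u)\in\overline{\{q(v)\}}$, i.e.\ $u\in\overline{\orb_v}$, and a generic $v$ in the (possibly large, multi-orbit) complement need not satisfy this. The ``further Baire-category descent'' you gesture at is exactly the hard selection step of the Effros--Mackey--Glimm theorem, and ``citing the classical dichotomy'' is not available: the paper's stated purpose in including this proposition is to supply a proof in the non-Hausdorff ample setting where the standard references do not apply. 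The paper sidesteps the contrapositive entirely and proves $(1)\Rightarrow(2)$ directly: fix $u$, replace $\G$ by $\G|_{\overline{\orb_u}}$ so that $\orb_u$ is dense, and for a countable basis $\{U_n\}$ put $V_n=r(s^{-1}(U_n))$. For each $v\notin\orb_u$ the $T_0$ hypothesis yields an invariant open set containing $\orb_u$ and missing $\orb_v$, hence some $V_n$ with $u\in U_n$ misses $\orb_v$; therefore $\orb_u=\bigcap\{V_n\mid u\in U_n\}$ is a $G_\delta$ in a second-countable locally compact Hausdorff space and so a Baire space by~\cite[Lemma~6.4]{CrossedProd}. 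You need this (or an equivalent) direct argument; as written, your cycle of implications is not closed.
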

\begin{proof}
Note that if $q\colon \G^0\to \G^0/\G$ is the quotient map, then $q$ is open because $q^{-1}(q(U)) = r(s^{-1}(U))$ is open.
We show that (1) implies (2).  Assume $\G^0/\G$ is $T_0$.  Let $\{U_n\mid n\in \mathbb N\}$ be a countable basis for the topology on $\G^1$.  Let $u\in\G^0$ and note that $\overline{\orb_u}$ is a Baire space, being a closed subspace of a second-countable locally compact Hausdorff space.  Replacing $\G$ by $\G|_{\overline{\orb_u}}$, we may assume without loss of generality that $\orb_u$ is dense in $\G^0$.  Put $V_n =r(s^{-1}(U_n))$ and note that the $V_n$ are open invariant sets containing $\orb_u$ by density.  Suppose $v\notin \orb_u$.  Then since $\orb_v\subseteq \overline{\orb_u}$, we conclude that $q(v)\in \overline{q(u)}$.  Since $\G^0/\G$ is $T_0$ and $q(u)\neq q(v)$, we must have an open subset $U$ of $\G^0/\G$ with $q(u)\in U$ and $q(v)\notin U$.  Then $q^{-1}(U)$ is an open invariant subset of $\G^0$ containing $\orb_u$ but not intersecting $\orb_v$.  If $u\in U_n\subseteq q^{-1}(U)$, then $V_n\subseteq q^{-1}(U)$ and so disjoint from $\orb_v$.  It follows that $\bigcap_{n\in \mathbb N} V_n = \orb_u$.  Therefore, $\orb_u$ is a $G_{\delta}$-set in a second-countable locally compact Hausdorff space and hence a Baire space in the relative topology by~\cite[Lemma~6.4]{CrossedProd}.

(2) implies (3) by Lemma~\ref{lem:discreteorbits}.  Assume that (3) holds and let $u\in \G^0$.  Since $\orb_u$ is discrete in the relative topology, there is a neighborhood $U$ with $U\cap \orb_u = \{u\}$.  Suppose that $v\in U$ with $v\neq u$.  Since $\G^0$ is Hausdorff, there is a neighborhood $V$ of $v$ with $V\subseteq U$ and $u\notin V$.  But then $V$ is a neighborhood of $v$ failing to intersect $\orb_u$ and so $v\notin \overline{\orb_u}$.  Therefore, $U\cap \overline{\orb_u} = \{u\}$ and so $W=r(s^{-1}(U))$ is an open invariant set with $W\cap \overline{\orb_u}=\orb_u$.  Thus $\orb_u$ is locally closed.

To see that (4)  implies (1), note that if $\overline{q(u)}=\overline{q(v)}$, then $\overline{\orb_u}=\overline{\orb_v}$.  Since $\orb_u$ is open in $\overline{\orb_u}$ in the relative topology, there is an open subset $U$ with $\emptyset\neq U\cap \overline{\orb_u} \subseteq \orb_u$.  Since $\orb_u\subseteq \overline{\orb_v}$ we must have $\emptyset\neq U\cap \orb_v\subseteq U\cap \overline{\orb_v} = U\cap \overline{\orb_u} \subseteq \orb_u$.  Thus $\orb_u=\orb_v$ and so $\G^0/\G$ is $T_0$.
\end{proof}

\subsection{Steinberg algebras}
Let $\Bbbk$ be a field and $\mathscr G$ an ample groupoid.  The \emph{Steinberg algebra}\footnote{The first and third authors insist on using this name.} $\Bbbk \mathscr G$ is the $\Bbbk$-span of the characteristic functions $1_U$ of compact open bisections $U$ with the convolution product
\[f\ast g(\gamma) =\sum_{s(\alpha)=s(\gamma)}f(\gamma\alpha^{-1})g(\alpha)\] (which is easily checked to be a finite sum).  If $G$ is a discrete group, viewed as a one-object ample groupoid, then $\Bbbk G$ is the usual group algebra.

Note that $1_U\ast 1_V=1_{UV}$ for compact open bisections and so $\Bbbk \mathscr G$ is a quotient of the semigroup algebra $\Bbbk S$, where $S$ is the inverse semigroup of compact open bisections.  Therefore, the dimension of $\Bbbk\mathscr G$ is bounded above by the cardinality of $S$.  In particular, if $\mathscr G$ is second-countable, then $\Bbbk \mathscr G$ is of countable dimension over $\Bbbk$.

Every Steinberg algebra has an involution $f\mapsto f^*$ given by $f^*(\gamma) = f(\gamma^{-1})$ and satisfying $(f\ast g)^{\ast}= g^{\ast}\ast f^{\ast}$.  It follows that there is no difference between left and right CCR or GCR for Steinberg algebras.

If $\mathscr G^1$ is Hausdorff, then $\Bbbk \mathscr G$ consists precisely of the locally constant mappings $f\colon \mathscr G^1\to \Bbbk$ with compact support.  In particular, if $\mathscr G$ is discrete it consists of the finitely supported functions. In general, if $U\subseteq \mathscr G^1$ is compact open, then $1_U\in \Bbbk\mathscr G$ regardless of whether $\mathscr G^1$ is Hausdorff.

The following result is folklore.

\begin{prop}\label{p:discrete.groupoid}
Let $\Bbbk$ be a field and $\mathscr G$ a discrete transitive groupoid with isotropy group $G_u$ at $u\in \mathscr G^0$.  Then $\Bbbk \mathscr G\cong M_{\mathscr G^0}(\Bbbk G_u)$.  Hence $\Bbbk \mathscr G$ is CCR (respectively, GCR) if and only if $\Bbbk G_u$ is CCR (respectively, GCR).
\end{prop}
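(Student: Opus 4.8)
The plan is to construct an explicit isomorphism $\Bbbk\mathscr G\cong M_{\mathscr G^0}(\Bbbk G_u)$ and then quote Corollary~\ref{c:matrix.amp}. First I would fix $u\in\mathscr G^0$ and, using transitivity, choose for each $v\in\mathscr G^0$ an arrow $\gamma_v\colon u\to v$, taking $\gamma_u$ to be the identity at $u$. In a transitive groupoid every arrow $\delta\colon v\to w$ then has a unique expression $\delta=\gamma_w g\gamma_v^{-1}$ with $g=\gamma_w^{-1}\delta\gamma_v\in G_u$, so $\delta\mapsto(w,g,v)$ is a bijection $\mathscr G^1\to\mathscr G^0\times G_u\times\mathscr G^0$. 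Since $\mathscr G$ is discrete, each singleton $\{\delta\}$ is a compact open bisection, so $\Bbbk\mathscr G$ has $\Bbbk$-basis $\{1_{\{\delta\}}\mid\delta\in\mathscr G^1\}$, and the convolution formula specializes to $1_{\{\alpha\}}\ast 1_{\{\beta\}}=1_{\{\alpha\beta\}}$ when $\alpha,\beta$ are composable and $0$ otherwise.

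Next I would define $\varphi\colon\Bbbk\mathscr G\to M_{\mathscr G^0}(\Bbbk G_u)$ on the basis by $\varphi(1_{\{\gamma_w g\gamma_v^{-1}\}})=g\,E_{wv}$, where $E_{wv}$ is the matrix unit with a $1$ in position $(w,v)$, and extend $\Bbbk$-linearly. This is a linear bijection, as it carries the above basis of $\Bbbk\mathscr G$ bijectively onto the standard basis $\{g\,E_{wv}\mid g\in G_u,\ v,w\in\mathscr G^0\}$ of $M_{\mathscr G^0}(\Bbbk G_u)$. The only substantive check is multiplicativity: the arrows $\gamma_w g\gamma_v^{-1}$ and $\gamma_{v'} h\gamma_{v''}^{-1}$ are composable exactly when $v=v'$, in which case their product is $\gamma_w(gh)\gamma_{v''}^{-1}$, while on the matrix side $(g\,E_{wv})(h\,E_{v'v''})=gh\,E_{wv''}$ when $v=v'$ and $0$ otherwise; the two sides agree, so $\varphi$ is a $\Bbbk$-algebra isomorphism.

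Finally, since $\Bbbk G_u$ is a unital ring, Corollary~\ref{c:matrix.amp} gives that $M_{\mathscr G^0}(\Bbbk G_u)$ is CCR (respectively, GCR) if and only if $\Bbbk G_u$ is, and the CCR and GCR properties are trivially preserved by ring isomorphisms, so the last assertion follows from $\varphi$. I do not expect a genuine obstacle: the real content is coordinatizing a transitive groupoid via a choice of base-point arrows, after which everything reduces to bookkeeping together with the already-established matrix stability of the CCR/GCR properties. The one point worth noting is that $\mathscr G^0$ may be infinite, but Corollary~\ref{c:matrix.amp} is stated for an arbitrary index set, so this is harmless.
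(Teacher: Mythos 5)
Your proposal is correct and follows essentially the same route as the paper: fix base-point arrows $\gamma_v\colon u\to v$, coordinatize each arrow as $\gamma_w g\gamma_v^{-1}$ to get the explicit isomorphism onto $M_{\mathscr G^0}(\Bbbk G_u)$ (the paper's map $\Phi(\gamma)=\gamma_v^{-1}\gamma\gamma_w E_{vw}$ is the same map with indices written from the other side), and then invoke Corollary~\ref{c:matrix.amp}. Your multiplicativity check is just a slightly more explicit version of what the paper leaves as ``easy to verify.''
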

\begin{proof}
Fix, for each $v\in \mathscr G^0$, an arrow $\gamma_v\colon u\to v$.  We may assume without loss of generality that $\gamma_u=u$ is a unit.  Since $\mathscr G$ is discrete, it follows that $\Bbbk\mathscr G$ consists of the finitely supported functions and so we can identify $\Bbbk \mathscr G$ with the $\Bbbk$-vector space with basis $\mathscr G^1$ and with the unique product extending the product in $\mathscr G$ (where undefined products are declared to be $0$) and satisfying the distributive law.

If $v,w\in \mathscr G^0$, let $E_{ vw}$ be the matrix with $1$ in position $vw$ and $0$ in all other positions.  Note that the $\gamma E_{vw}$ with $\gamma\in G_u$ form a $\Bbbk$-basis for $M_{\mathscr G^0}(\Bbbk G_u)$.  Define $\Phi\colon \Bbbk\mathscr G\to M_{\mathscr G^0}(\Bbbk G_u)$ by \[\Phi(\gamma) = \gamma_v^{-1}\gamma\gamma_wE_{vw}\] for $\gamma\colon w\to v$.  Then it is easy to verify that $\Phi$ is an isomorphism with inverse given by $\Phi^{-1}(\gamma E_{vw}) = \gamma_v\gamma\gamma_w^{-1}$.

The final statement follows from Corollary~\ref{c:matrix.amp}.
\end{proof}

A Steinberg algebra $\Bbbk\mathscr G$ is unital if and only if the unit space $\mathscr G^0$ is compact.  It is always a ring with local units where a set of  local units is given by the $1_U$ with $U\subseteq \mathscr G^0$ compact open.

If $U$ is an open invariant subset, then $\mathscr G|_U$ is an open subgroupoid of $\mathscr G$ and hence ample in its own right.  We can identify $\Bbbk \mathscr G|_U$ as a subalgebra of $\Bbbk \mathscr G$, and in fact it is easy to see that it is an ideal.

 If $X$ is a closed invariant subset, then $\mathscr G|_X$ is a closed subgroupoid and is an ample groupoid in the relative topology.  Moreover, for any $f\in \Bbbk \mathscr G$, its restriction to $\mathscr G|_X$ belongs to $\Bbbk \mathscr G|_X$ and the restriction map is a homomorphism with kernel the ideal $\Bbbk \mathscr G|_{\mathscr G^0\setminus X}\subseteq \Bbbk \mathscr G$.  Indeed, if $U$ is a compact open bisection of $\mathscr G$, then $U\cap \mathscr G|_X$ is open in the induced topology and is also a closed subspace of the compact Hausdorff space $U$ and hence compact. Thus $U\cap \mathscr G|_X$ is a compact open bisection of $\mathscr G|_X$.  The restriction homomorphism is, in fact, surjective. The argument below is adapted from~\cite[Lemma~5.5]{lawsonvdovina}. Let $U$ be a compact open bisection of $\mathscr G|_X$. Then we can cover $U$ by a family of compact open bisections of $\mathscr G$ whose intersections with $\mathscr G|_X$ are contained in $U$.  Then by going to a finite subcover, we see that $U = (U_1\cup \cdots \cup U_n)\cap \mathscr G|_X$ for some compact open bisections $U_1,\ldots, U_n$ of $\mathscr G$. Put $V_1=U_1$ and suppose inductively that we have found a compact open bisection $V_i$ of $\mathscr G$ with $V_i\cap \mathscr G|_X=(U_1\cup\cdots \cup U_i)\cap \mathscr G|_X$ with $1\leq i\leq n-1$.  Note that $W=s(V_i)\setminus s(U_{i+1})$ and $W' = r(V_i)\setminus r(U_{i+1})$ are compact open subsets of $\mathscr G^0$.  Then $V'=W'V_iW\subseteq V_i$ is a compact open bisection and $s(V')\cap s(U_{i+1})=\emptyset =r(V')\cap r(U_{i+1})$ and so $V_{i+1} =V'\cup U_{i+1}$ is a compact open bisection of $\mathscr G$.  We claim that $V_{i+1}\cap \mathscr G|_X = (U_1\cup\cdots\cup  U_{i+1})\cap \mathscr G|_X$.  The containment from left to right is obvious since $V'\subseteq V_i$.    Also $U_{i+1}\cap \mathscr G|_X\subseteq V_{i+1}\cap \mathscr G|_X$ by construction.  Suppose that $\gamma\in (U_1\cup\cdots \cup U_i)\cap \mathscr G|_X=V_i\cap \mathscr G|_X\subseteq U$ but $\gamma\notin U_{i+1}$.  If $s(\gamma)\in s(U_{i+1})$, then since $U$ is a bisection of $\mathscr G|_X$, we deduce that $\gamma\in U_{i+1}$.  Thus $s(\gamma)\in s(V_i)\setminus s(U_{i+1})$.  An identical argument shows that $r(\gamma)\in r(V_i)\setminus r(U_{i+1})$.  Therefore, $\gamma\in V'\subseteq V_{i+1}$.  Thus $V_n$ is a compact open bisection of $\mathscr G$ with $V_n\cap \mathscr G|_X = U$ and so  $1_U$ is the restriction of $1_{V_n}$. We conclude that the restriction map is onto.

If $u\in \mathscr G^0$, then $\mathscr G_u$ denotes the set of arrows with source $u$.  The isotropy group $G_u$ acts freely on the right of $\mathscr G_u$ by multiplication.  Thus $\Bbbk \mathscr G_u$ is  a free right $\Bbbk G_u$-module.  In fact, it is a $\Bbbk \mathscr G$-$\Bbbk G_u$-bimodule where the left $\Bbbk \mathscr G$-module structure is defined by
\[f\gamma = \sum_{s(\alpha)=r(\gamma)} f(\alpha)\alpha\gamma\] for $f\in \Bbbk\mathscr G$ and $\gamma\in \mathscr G_u$.  We then have an induction functor
\[\mathrm{Ind}_u\colon \Bbbk G_u\text{-}\mathrm{mod}\to \Bbbk \mathscr G\text{-}\mathrm{mod}\] given by
\[\mathrm{Ind}_u(V) = \Bbbk \mathscr G_u\otimes_{\Bbbk G_u} V.\]  Since $\Bbbk \mathscr G_u$ is a free right $\Bbbk G_u$-module, it follows that $\mathrm{Ind}_u$ is an exact functor.  Moreover, a $\Bbbk G_u$-basis for $\Bbbk\mathscr G_u$ is given by fixing for each $v\in \orb_u$ an arrow $\gamma_v\colon u\to v$.  It follows that as a $\Bbbk$-vector space, we have that
\begin{equation}\label{eq:induced.decomp}
\mathrm{Ind}_u(V) = \bigoplus_{v\in \orb_u} \gamma_v\otimes V.
\end{equation}

A key fact, proved in~\cite{St10}, is that $\mathrm{Ind}_u$ preserves simplicity.

\begin{thm}
If $V$ is a simple $\Bbbk G_u$-module, then $\mathrm{Ind}_u(V)$ is a simple $\Bbbk\mathscr G$-module.
\end{thm}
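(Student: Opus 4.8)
The plan is to show that any nonzero $\Bbbk\mathscr G$-submodule $W \subseteq \mathrm{Ind}_u(V)$ equals all of $\mathrm{Ind}_u(V)$, exploiting the decomposition $\mathrm{Ind}_u(V) = \bigoplus_{v\in \orb_u}\gamma_v\otimes V$ from \eqref{eq:induced.decomp} and the idempotents $1_B$ coming from compact open subsets $B\subseteq \mathscr G^0$. First I would observe that for a compact open subset $B$ of $\mathscr G^0$, the action of $1_B$ on $\mathrm{Ind}_u(V)$ is the projection onto $\bigoplus_{v\in B\cap \orb_u}\gamma_v\otimes V$: indeed, $1_B\ast (\gamma_v\otimes x)$ picks out the arrows $\alpha$ with $s(\alpha)=r(\gamma_v)=v$ lying in $B$, which is $\gamma_v$ itself if $v\in B$ and nothing otherwise. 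Hence for each $v\in\orb_u$, choosing a compact open bisection $U_v$ containing $\gamma_v$ with $s(U_v)$ small enough, the idempotent $1_{s(U_v)}$ acts as the projection onto a finite sum of the summands $\gamma_w\otimes V$ — and by shrinking we can isolate the single summand $\gamma_v\otimes V$, since $\orb_u$ is discrete when $\mathscr G$ is second countable (Lemma~\ref{lem:discreteorbits})... but wait, the theorem as stated does not assume second countability, so I should avoid that route and instead argue directly.

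The cleaner approach: given $0\neq \xi \in W$, write $\xi = \sum_{i=1}^n \gamma_{v_i}\otimes x_i$ with the $v_i$ distinct and $x_i\neq 0$. Pick a compact open bisection $U$ containing $\gamma_{v_1}$ with $v_1 \in s(U)$; after intersecting $s(U)$ with a compact open neighborhood of $v_1$ separating it from $v_2,\dots,v_n$ (possible since $\mathscr G^0$ is Hausdorff with a basis of compact open sets), we may assume $s(U)\cap\{v_1,\dots,v_n\} = \{v_1\}$. Then $1_{s(U)}\ast \xi = \gamma_{v_1}\otimes x_1 \in W$, so $W$ contains a nonzero element of a single summand $\gamma_{v_1}\otimes V$. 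Next I would show $W \supseteq \gamma_{v_1}\otimes V$: the stabilizer structure gives a $\Bbbk$-algebra map realizing $\Bbbk G_u$ inside the action — concretely, for $g\in G_u$, choose a compact open bisection $U_g$ containing the arrow $\gamma_{v_1} g \gamma_{v_1}^{-1}$, and one checks $1_{U_g}\ast(\gamma_{v_1}\otimes x) = \gamma_{v_1}\otimes gx$ after restricting the source appropriately. Since $V$ is simple, $\Bbbk G_u \cdot x_1 = V$, so $W\supseteq \gamma_{v_1}\otimes V$.

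Finally I would propagate from the single summand to all of $\mathrm{Ind}_u(V)$: for any $w\in \orb_u$, there is an arrow $\gamma_w\gamma_{v_1}^{-1}\colon v_1\to w$, contained in some compact open bisection $U$; then for $x\in V$, the computation $1_U\ast(\gamma_{v_1}\otimes x) = \gamma_w\otimes x$ (again after restricting $s(U)$ to contain $v_1$ and miss the other relevant units, so only the $\gamma_{v_1}$ term contributes) shows $\gamma_w\otimes V \subseteq W$. Ranging over all $w\in\orb_u$ gives $W = \bigoplus_{w\in\orb_u}\gamma_w\otimes V = \mathrm{Ind}_u(V)$. We also need $\Bbbk\mathscr G\cdot\mathrm{Ind}_u(V)\neq 0$, which is immediate since $1_{s(U)}$ acts nontrivially. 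I expect the main obstacle to be the bookkeeping in the bisection computations — verifying that the convolution $1_U \ast (\gamma_v\otimes x)$ really does equal $\gamma_w\otimes (g x)$ for the appropriate $g$, and that one can always shrink $U$ so that exactly one tensor summand survives without destroying the arrow one wants; this is routine given that $\mathscr G$ is étale and $\mathscr G^0$ has a basis of compact open Hausdorff sets, but it is where all the care goes. (This is the content of the proof in~\cite{St10}.)
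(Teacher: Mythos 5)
Your argument is correct, and it is essentially the standard proof from~\cite{St10} (which the paper cites for this theorem rather than reproving it); it also matches the sketch the paper gives immediately afterwards for the special case of the trivial module: use an idempotent $1_B$ with $B\subseteq\mathscr G^0$ compact open to isolate a single summand $\gamma_{v_1}\otimes V$, use compact open bisections through $\gamma_{v_1}G_u\gamma_{v_1}^{-1}$ together with simplicity of $V$ to fill out that summand, and then transport to every other summand by a bisection containing $\gamma_w\gamma_{v_1}^{-1}$. The only cosmetic points are that your detour through second countability is rightly abandoned (the theorem needs no such hypothesis) and that no shrinking of $s(U_g)$ is actually needed in the isotropy step, since a bisection already contains at most one arrow with source $v_1$.
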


Of particular interest is when $V$ is the trivial module $\Bbbk$ (where $G_u$ acts as the identity).  Then \eqref{eq:induced.decomp} implies that as a vector space $\mathrm{Ind}_u(\Bbbk)\cong \Bbbk \orb_u$.  The action is easily verified to be given by
\[fv = \sum_{\gamma\colon v\to w}f(\gamma)w\] for $f\in \Bbbk \mathscr G$ and $v\in \orb_u$ since $\alpha(\gamma_{s(\alpha)}\otimes 1) = \gamma_{r(\alpha)}\otimes 1$.  In particular, if $U$ is a compact open bisection, then
\[1_Uv = \begin{cases} U\cdot v, & \text{if}\ v\in s(U)\\ 0, & \text{else.} \end{cases}\]
One can easily prove the simplicity of $\Bbbk \orb_u$ directly from these observations.  If $x=\sum_{v\in \orb_u}c_vv$ is a non-zero finite linear combination, then since $\mathscr G^0$ is Hausdorff, we can find $U\subseteq \mathscr G^0$ compact open with $Ux=c_vv$ with $c_v\neq 0$ for some $v\in \orb_u$.  Then using that $S$ acts transitively on $\orb_u$, we can obtain any basis element from $c_vv$.

 It is not difficult to verify that $\mathrm{End}_{\Bbbk G_u}(V,V)\cong \mathrm{End}_{\Bbbk \mathscr G}(\mathrm{Ind}_u(V),\mathrm{Ind}_u(V))$ for any $\Bbbk G_u$-module $V$.  We shall only perform this verification for the case of the trivial module, as that is all we shall need.

\begin{lem}\label{l:endo.trivial}
Let $\Bbbk$ be a field and $\orb_u$ an orbit of $\mathscr G$.  Then $\mathrm{End}_{\Bbbk \mathscr G}(\Bbbk \orb_u)=\Bbbk$.
\end{lem}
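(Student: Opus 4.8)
The goal is to show every $\Bbbk\mathscr G$-module endomorphism of $\Bbbk\orb_u$ is a scalar. I would work directly with the concrete description of the module: $\Bbbk\orb_u$ has $\orb_u$ as a $\Bbbk$-basis, and for a compact open bisection $U$ we have $1_U v = U\cdot v$ if $v\in s(U)$ and $0$ otherwise. In particular, for $U\subseteq\mathscr G^0$ compact open, $1_U$ acts as the diagonal idempotent projecting onto the span of $U\cap\orb_u$. The key structural fact I would exploit is that the inverse semigroup $S$ of compact open bisections acts transitively on $\orb_u$ (established earlier: $\orb_u = S\cdot u$), and that singletons in $\orb_u$ are \emph{separated} by elements of $\mathscr G^0$ since $\mathscr G^0$ is Hausdorff with a basis of compact open sets.

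\textbf{First step.} Fix $\Phi\in\mathrm{End}_{\Bbbk\mathscr G}(\Bbbk\orb_u)$ and fix the basepoint $u$. I claim $\Phi(u)$ is a scalar multiple of $u$. Write $\Phi(u) = \sum_{v\in F} c_v v$ for a finite subset $F\subseteq\orb_u$. If some $v_0\in F$ with $v_0\neq u$ has $c_{v_0}\neq 0$, then since $\mathscr G^0$ is Hausdorff with a basis of compact open sets I can pick a compact open $U\subseteq\mathscr G^0$ with $v_0\in U$ but $u\notin U$ (and, shrinking, so that $U\cap F$ is whatever I want — but I only need to exclude $u$). Then $1_U\cdot u = 0$ since $u\notin U$, so $0 = \Phi(1_U\cdot u) = 1_U\cdot\Phi(u) = \sum_{v\in F\cap U}c_v v$, forcing $c_{v_0}=0$, a contradiction. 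Hence $\Phi(u) = c\,u$ for some scalar $c\in\Bbbk$.

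\textbf{Second step.} Now I propagate this across the orbit using transitivity of $S$. Given any $v\in\orb_u$, choose a compact open bisection $U\in S$ with $u\in s(U)$ and $U\cdot u = v$; then $1_U\cdot u = v$, so
\[
\Phi(v) = \Phi(1_U\cdot u) = 1_U\cdot\Phi(u) = 1_U\cdot(c\,u) = c\,(1_U\cdot u) = c\,v.
\]
Thus $\Phi$ agrees with multiplication by $c$ on every basis element of $\Bbbk\orb_u$, hence $\Phi = c\cdot\mathrm{id}$. Conversely scalar multiplications are obviously $\Bbbk\mathscr G$-endomorphisms, so $\mathrm{End}_{\Bbbk\mathscr G}(\Bbbk\orb_u) = \Bbbk$.

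\textbf{Main obstacle.} There is essentially no deep obstacle here; the only point requiring a little care is the separation argument in the first step — making sure that one can find a compact open subset of $\mathscr G^0$ containing a given point $v_0$ of the orbit but not containing $u$, which is exactly where the standing hypothesis that $\mathscr G^0$ is Hausdorff with a basis of compact open sets (i.e., that $\mathscr G$ is ample) is used. If one wanted to avoid even naming points, one could phrase the first step as: the idempotents $1_U$, $U\subseteq\mathscr G^0$ compact open, act as projections onto subspaces spanned by subsets of $\orb_u$, and their joint action separates basis vectors, which pins down $\Phi(u)$ on the line $\Bbbk u$ because $\Phi$ commutes with all of them. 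Everything else is the formal transitivity argument above.
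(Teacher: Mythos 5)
Your proposal is correct and follows essentially the same argument as the paper: pin down $\Phi(u)$ as a scalar multiple of $u$ using the idempotents $1_U$ for compact open $U\subseteq\mathscr G^0$ together with Hausdorffness of the unit space, then propagate by transitivity via $1_V u=v$. The only cosmetic difference is the direction of the separation in the first step (you choose $U$ containing an offending $v_0$ and excluding $u$, whereas the paper chooses a neighborhood of $u$ excluding the rest of the support); both are equally valid.
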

\begin{proof}
Let $\Phi\in \mathrm{End}_{\Bbbk \mathscr G}(\Bbbk \orb_u)$.  We first claim that $\Phi(u) = cu$ for some $c\in \Bbbk$.  Indeed, if $\Phi(u) = \sum_{v\in \orb_u}c_vv$, then since the support of the sum is finite and $\mathscr G^0$ is Hausdorff, we can find a compact open neighborhood $U$ of $u$ in $\mathscr G^0$ so that if $v\in U\cap \orb_u$ and $c_v\neq 0$, then $v=u$.  As $1_Uu=u$, it then follows that $\Phi(u) = \Phi(1_Uu)=1_U\Phi(u) = c_uu$.    Next observe that if $v\in \orb_u$, then there is a compact open bisection $V$ with $V\cdot u=v$, i.e., $1_Vu=v$.  Then $\Phi(v) = \Phi(1_Vu)=1_V\Phi(u) = 1_Vc_uu = c_u1_Vu=c_uv$.    Thus $\Phi$ is multiplication by $c_u\in \Bbbk$, as required.
\end{proof}

\subsection{Modules and sheaves}
 In~\cite{St14} the second author showed that the category $\Bbbk \mathscr G\text{-}\mathrm{mod}$ of unitary $\Bbbk\mathscr G$-modules is equivalent to the category of $\mathscr G$-sheaves of $\Bbbk$-vector spaces.  We shall need aspects of this equivalence.

A \emph{$\mathscr G$-sheaf} consists of a space $E$, a local homeomorphism $p\colon E\to \mathscr G^0$ and an action map $\mathscr G^1\times_{\mathscr G^0} E\to E$ (where the fiber product is with respect to $s$ and $p$), written $(\gamma,e)\mapsto \gamma e$ satisfying the following axioms:
\begin{itemize}
\item $p(e)e=e$ for all $e\in E$;
\item $p(\gamma e)=r(\gamma)$ whenever $p(e)=s(\gamma)$;
\item $\gamma(\alpha e)=(\gamma \alpha)e$ whenever $p(e)=s(\alpha)$ and $s(\gamma)=r(\alpha)$.
\end{itemize}

A \emph{$\mathscr G$-sheaf of $\Bbbk$-vector spaces} is a $\mathscr G$-sheaf $(E,p)$ together with a $\Bbbk$-vector space structure on each stalk $E_u=p^{-1}(u)$ such that:
\begin{itemize}
\item the zero section $0\colon \mathscr G^0\to E$ given by $u\mapsto 0_u$ (the zero of $E_u$) is continuous;
\item (fiberwise) addition $E\times_{\mathscr G^0} E\to E$ is continuous;
\item scalar multiplication $\Bbbk \times E\to E$ is continuous;
\item for each $\gamma\in \mathscr G^1$, the map $L_{\gamma}\colon E_{s(\gamma)}\to E_{r(\gamma)}$ given by $L_{\gamma}(e) = \gamma e$ is linear;
\end{itemize}
where $\Bbbk$ has the discrete topology in the third item.
Note that the first three conditions are equivalent to $(E,p)$ being a sheaf of $\Bbbk$-vector spaces over $\mathscr G^0$.  Also observe that each $L_{\gamma}$ is a linear isomorphism with inverse $L_{\gamma^{-1}}$ and that $L_u = 1_{E_u}$ for a unit $u\in \mathscr G^0$.  There is an obvious category of $\mathscr G$-sheaves of $\Bbbk$-vector spaces.

Note that $0(\mathscr G^0) = \{0_u\mid u\in \mathscr G^0\}$ is an open subspace of $E$, being the image of a section of the local homeomorphism $p$.  A (continuous) section $t\colon \mathscr G^0\to E$ of $p$ is said to have \emph{compact support} if \[\mathrm{supp}(t) =\{u\in \mathscr G^0\mid t(u)\neq 0_u\}\] is compact; note that it is always closed.

If $(E,p)$ is a $\mathscr G$-sheaf of $\Bbbk$-vector spaces, then we let $\Gamma_c(E)$ denote the set of  (continuous) sections of $p$ with compact support.  It is easy to see that $\Gamma_c(E)$ is a $\Bbbk$-vector space with pointwise operations (where the zero section is the additive identity).  Moreover, $\Gamma_c(E)$ has the structure of a unitary $\Bbbk \mathscr G$-module by putting
\begin{equation}\label{eq:module.action.sheaf}
(ft)(v) = \sum_{\gamma\colon u\to v} f(\gamma)\gamma t(u)
\end{equation}
 (which is a finite sum) for $f\in \Bbbk\mathscr G$ and $t\in \Gamma_c(E)$.

Since the assignment $(E,p)\mapsto \Gamma_c(E)$ is an equivalence of categories, we have that $\Gamma_c(E)$ is simple if and only if $(E,p)$ has no proper non-zero subsheaves.  A subsheaf amounts to an open subspace $E'\subseteq E$ such that $E'_u$ is a linear subspace of $E_u$ for all $u\in \mathscr G^0$ and $L_{\gamma}(E'_{s(\gamma)})\subseteq E'_{r(\gamma)}$ for all $\gamma\in \mathscr G^1$.  We say that $(E,p)$ is a \emph{simple sheaf} when the corresponding module $\Gamma_c(E)$ is simple.

If $(E,p)$ is a $\mathscr G$-sheaf of $\Bbbk$-vector spaces, we put
\[\mathrm{supp}(E) = \{u\in \mathscr G^0\mid E_u\neq \{0_u\}\}\] and note that it is in invariant subset of $\mathscr G^0$.  The following observation will be essential for us.

\begin{prop}\label{p:simple.sheaf}
Let $(E,p)$ be a simple $\mathscr G$-sheaf of $\Bbbk$-vector spaces.  Let $X=\overline{\mathrm{supp}(E)}$; note that it is a closed invariant subspace of $\mathscr G^0$.
\begin{enumerate}
\item If $u\in \mathrm{supp}(E)$, then $\overline{\orb_u}=X$.
\item  $\Bbbk\mathscr G|_{\mathscr G^0\setminus X}\subseteq \mathrm{ann}(\Gamma_c(E))$ and $\Gamma_c(E)$ is a simple $\Bbbk \mathscr G|_X$-module with the $\Bbbk\mathscr G$-action factoring through the quotient $\Bbbk\mathscr G\to \Bbbk\mathscr G|_X$.
\end{enumerate}
\end{prop}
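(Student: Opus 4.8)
The plan is to prove the two items in tandem, since the heart of the matter is showing that if $u \in \operatorname{supp}(E)$, then $\operatorname{supp}(E)$ is contained in $\overline{\orb_u}$ — in fact, equals $\orb_u$ up to density — and that sections supported off $\overline{\orb_u}$ would generate a proper subsheaf. First I would observe that $\operatorname{supp}(E)$ is an invariant subset: if $E_u \neq \{0_u\}$ and $\gamma\colon u \to v$, then $L_\gamma\colon E_u \to E_v$ is a linear isomorphism, so $E_v \neq \{0_v\}$; hence $X = \overline{\operatorname{supp}(E)}$ is a closed invariant subspace of $\mathscr G^0$, as claimed.

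For item (1), fix $u \in \operatorname{supp}(E)$ and pick $0_u \neq e \in E_u$. Using that $p$ is a local homeomorphism, choose a local section $t$ through $e$; multiplying by the characteristic function of a suitable compact open neighborhood of $u$ in $\mathscr G^0$ (this is where we use that $\mathscr G^0$ is Hausdorff with a basis of compact open sets, and that sections with compact support form the module $\Gamma_c(E)$), I get a section $t \in \Gamma_c(E)$ with $t(u) = e \neq 0_u$, so $t$ is not the zero section. By simplicity of $\Gamma_c(E)$ we have $\Bbbk\mathscr G\cdot t = \Gamma_c(E)$. Now I would show that every element of $\Bbbk\mathscr G\cdot t$ is supported (as a section) on $\orb_u$: from the module action formula \eqref{eq:module.action.sheaf}, $(ft)(v)$ can only be nonzero if there is some $\gamma\colon w \to v$ with $t(w) \neq 0_w$, and any such $w$ must lie in $\orb_u$... but more carefully, I want to control which $w$ appear. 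The cleaner route: let $Y = \overline{\orb_u}$, a closed invariant set, and consider the subsheaf $E|_Y$ obtained by restricting to the open invariant set $\mathscr G^0 \setminus Y$ — no, rather, I would argue that $E' = \{ e' \in E \mid p(e') \in \mathscr G^0 \setminus Y\}$ is an open invariant subsheaf, whose complement within the stalks over $Y$ need not be a subsheaf, so instead let me take $E'' = p^{-1}(\mathscr G^0\setminus Y)\cap E$, which \emph{is} an open $\Bbbk$-linear $\mathscr G$-subsheaf (it is open since $p$ is continuous and $\mathscr G^0\setminus Y$ is open, linear on stalks trivially, and $L_\gamma$-invariant by invariance of $\mathscr G^0\setminus Y$). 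Its section module is $\Gamma_c(E'')$. Since $t$ vanishes outside a compact set and $t(u)\neq 0$, if $E''$ were nonzero then... hmm — I actually want $E''$ to be \emph{zero} to force $\operatorname{supp}(E)\subseteq Y$. So the real argument is: the complementary subsheaf supported on $Y$, namely the closure-interior manipulation, won't directly work. Let me instead argue by the submodule $\{ s \in \Gamma_c(E) \mid \operatorname{supp}(s) \subseteq Y\}$: this is a $\Bbbk\mathscr G$-submodule (clear from \eqref{eq:module.action.sheaf} and invariance of $Y$), it contains $t$ (since $\operatorname{supp}(t)$ is a compact subset of $\orb_u \subseteq Y$ — wait, $t(u)=e$ but $t$ may be nonzero at nearby non-orbit points; I should shrink the compact open neighborhood so $\operatorname{supp}(t)$ is small, but it can still meet $\mathscr G^0 \setminus \orb_u$). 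This is the main obstacle, and it is resolved exactly by item (1)'s conclusion feeding back: once we know $\overline{\operatorname{supp}(E)} = \overline{\orb_u}$, we're fine, but proving (1) seems to need (2). I would break the circularity as follows: $\{ s\in \Gamma_c(E) \mid \operatorname{supp}(s) \subseteq X\}$ is all of $\Gamma_c(E)$ by definition of $X$ and the fact that supports are closed and contained in $\operatorname{supp}(E)\subseteq X$. So $\Bbbk\mathscr G|_{\mathscr G^0\setminus X}$, being spanned by $1_U$ with $U\subseteq \mathscr G|_{\mathscr G^0\setminus X}$ a compact open bisection, acts as zero: for such $U$, $s(U)\subseteq \mathscr G^0\setminus X$, and $(1_U\cdot s)(v) = L_\gamma(s(w))$ where $\gamma\colon w\to v$ is the unique arrow of $U$ into $v$, with $w \in s(U)\subseteq \mathscr G^0\setminus X$, hence $s(w)=0_w$, giving $1_U\cdot s = 0$. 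This proves the first half of (2), and the $\Bbbk\mathscr G$-action factors through $\Bbbk\mathscr G \to \Bbbk\mathscr G|_X$ (which we know is a surjective homomorphism from the earlier discussion of reductions to closed invariant sets).

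For the simplicity of $\Gamma_c(E)$ as a $\Bbbk\mathscr G|_X$-module, I would invoke Lemma~\ref{l:restrict.ideals} conceptually or just argue directly: a $\Bbbk\mathscr G|_X$-submodule of $\Gamma_c(E)$ is in particular stable under all $1_U$ with $U$ a compact open bisection of $\mathscr G|_X$; since the restriction map $\Bbbk\mathscr G \to \Bbbk\mathscr G|_X$ is onto, the $\Bbbk\mathscr G|_X$-submodules of $\Gamma_c(E)$ are exactly the $\Bbbk\mathscr G$-submodules (using that $\Bbbk\mathscr G$ acts through $\Bbbk\mathscr G|_X$), so simplicity over $\Bbbk\mathscr G$ gives simplicity over $\Bbbk\mathscr G|_X$. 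Finally, for item (1) itself, with $u\in\operatorname{supp}(E)$: $\orb_u\subseteq\operatorname{supp}(E)\subseteq X$ so $\overline{\orb_u}\subseteq X$; conversely, the set $Z=\mathscr G^0\setminus\overline{\orb_u}$ is open and invariant, and $\{s\in\Gamma_c(E)\mid \operatorname{supp}(s)\cap\overline{\orb_u}=\emptyset\}$ — equivalently $\operatorname{supp}(s)\subseteq Z$ — is a $\Bbbk\mathscr G$-submodule; it is proper because taking $t$ as above with $t(u)=e\neq 0_u$, any section in this submodule vanishes at $u$ while... no, $t$ itself need not be in the \emph{complementary} part. The right dichotomy: the submodule $N=\{s\mid \operatorname{supp}(s)\subseteq Z\}$ is proper iff some section is nonzero on $\overline{\orb_u}$, which holds since $u\in\operatorname{supp}(E)$ and we built $t$ with $t(u)\neq 0$. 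Hence by simplicity $N=0$, i.e. every section vanishes on $Z$, i.e. $\operatorname{supp}(E)\subseteq \overline{\orb_u}$, giving $X=\overline{\operatorname{supp}(E)}\subseteq\overline{\orb_u}$. Combined with the reverse inclusion, $\overline{\orb_u}=X$. The one point needing care throughout — the main obstacle — is verifying that these ``support-constraint'' sets really are $\Bbbk\mathscr G$-submodules and that the building of a compactly supported section $t$ with prescribed nonzero value works in the possibly non-Hausdorff total space $E$; both follow from $p$ being a local homeomorphism with Hausdorff base having a basis of compact opens, exactly as in the standard Steinberg-algebra arguments.
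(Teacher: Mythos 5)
Your final argument is correct, and it is in substance the paper's argument transported to the module side of the sheaf--module equivalence. Where the paper observes that $E'=p^{-1}(\mathscr G^0\setminus\overline{\orb_u})\cup 0(\mathscr G^0)$ is a \emph{proper} open subsheaf of the simple sheaf $(E,p)$ (proper because $E'_u=\{0_u\}\neq E_u$) and hence the zero subsheaf, you work with $N=\{s\in\Gamma_c(E)\mid \mathrm{supp}(s)\subseteq\mathscr G^0\setminus\overline{\orb_u}\}$, show it is a proper $\Bbbk\mathscr G$-submodule (using a compactly supported section through a nonzero element of $E_u$) and conclude $N=0$; under $\Gamma_c$ these are the same object, so the two proofs differ only in phrasing. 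Two points to tighten: the step from $N=0$ to ``$E_v=\{0_v\}$ for all $v\notin\overline{\orb_u}$'' is not literally an ``i.e.'' --- it needs the truncation $s\mapsto 1_Ws$ for $W$ a compact open subset of $\mathscr G^0\setminus\overline{\orb_u}$ together with the local-section construction you invoke at the end, both routine but worth stating; and the abandoned detours (the attempt via $\Bbbk\mathscr G\cdot t$, the worry about circularity, the discarded $E''$) should be deleted, since your final argument uses none of them. Your treatment of item (2) --- annihilation of $\Bbbk\mathscr G|_{\mathscr G^0\setminus X}$ from the action formula plus $\mathrm{supp}(s)\subseteq\mathrm{supp}(E)\subseteq X$, then factoring through the surjection onto $\Bbbk\mathscr G|_X$ and transferring simplicity along it --- matches the paper's.
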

\begin{proof}
Let $u\in \mathrm{supp}(E)$ and put $U = \mathscr G^0\setminus \overline{\orb_u}$.  Then $U$ is an open invariant subspace and so it follows immediately that \[E' = p^{-1}(U)\cup 0(\mathscr G)=\bigcup_{v\in U}E_v\cup \bigcup_{w\in \overline{\orb_u}}\{0_w\}\] is an open subsheaf.  Moreover,  since $\{0_u\}=E'_u\leq E_u\neq 0$ we have that $E'$ is a proper subsheaf and hence is the zero subsheaf by simplicity. Thus $E_v=\{0_v\}$ for all $v\in \mathscr G^0\setminus \overline{\orb_u}$.  If follows that $\mathrm{supp}(E)\subseteq \overline{\orb_u}$, which implies the first item.

For the second item, notice that if $v\notin X$, then $E_v=\{0_v\}$  and so $t(v)=0$ for all $t\in \Gamma_c(E)$.  But if $f\in \Bbbk\mathscr G|_{\mathscr G^0\setminus X}$, then by \eqref{eq:module.action.sheaf}, we see that $fs$ has support contained in $\mathscr G^0\setminus X$ for all $s\in \Gamma_c(E)$.  We conclude that $\Bbbk\mathscr G|_{\mathscr G^0\setminus X}$ annihilates $\Gamma_c(E)$.   Since $\Bbbk\mathscr G|_{\mathscr G^0\setminus X}$ is the kernel of the projection to $\Bbbk\mathscr G|_X$, the final statement follows.
\end{proof}

\begin{remark}
The $\mathscr G|_X$-sheaf corresponding to the $\Bbbk\mathscr G|_X$-module structure on $\Gamma_c(E)$ is given by restricting $(E,p)$ to $(p^{-1}(X),p)$.
\end{remark}

\section{CCR characterization}\label{sec:CCRcharacterization}
Let $\Bbbk$ be a field and let $\mathscr G$ be an ample groupoid. 
 Our goal is to characterize when $\Bbbk \G$ is CCR in terms of the isotropy groups and the topology of the orbit space in the case that $\mathscr G$ is second-countable.  
 This is an analogue of results characterizing CCR groupoid $C^*$-algebras. In the $C^*$-algebra case the first and third authors show in~\cite[Theorem 6.1]{Cl07} and~\cite[Theorem 3.6]{vW18a}, respectively, that the groupoid $C^*$-algebra of a second-countable locally compact Hausdorff groupoid is CCR if and only if the orbit space is $T_1$ and the isotropy groups are CCR. Unlike the $C^*$-case we do not assume that $\mathscr G$ is Hausdorff, only the unit space.

Note that if $\mathscr G$ is second-countable and $\Bbbk$ is an uncountable algebraically closed field, then since $\Bbbk \mathscr G$ has countable dimension over $\Bbbk$, we have that it is CCR if and only if all its irreducible representations are by finite rank operators over $\Bbbk$.


The following is our first main result of the paper.

\begin{thm}\label{t:CCRthm}
Let $\Bbbk$ be a field and $\mathscr G$ a second-countable ample groupoid.  Then the Steinberg algebra $\Bbbk \G$ is CCR if and only if the orbit space $\G^0/\G$  is $T_1$ and  $\Bbbk G_u$ is  CCR for each isotropy group $G_u$.  In particular, if $\G$ has virtually abelian isotropy groups, then $\mathbb C\G$ is CCR if and only if $\G^0/\G$ is $T_1$.
\end{thm}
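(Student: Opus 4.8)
The plan is to run everything through Proposition~\ref{p:ccrgcrnice}: $\Bbbk\mathscr{G}$ is CCR exactly when it acts on every simple module $M$ by finite rank operators over the division ring $\mathrm{End}_{\Bbbk\mathscr{G}}(M)$. The two inputs I would lean on are the induced simple modules $\mathrm{Ind}_u(V)$ — especially the orbit module $\Bbbk\orb_u=\mathrm{Ind}_u(\Bbbk)$, whose endomorphism ring is just $\Bbbk$ by Lemma~\ref{l:endo.trivial} — and the sheaf description of simple modules together with Proposition~\ref{p:simple.sheaf}, which shows that the action of a Steinberg algebra on a simple module factors through reduction to an orbit closure. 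I would first record the topological dictionary: $\mathscr{G}^0/\mathscr{G}$ is $T_1$ if and only if every orbit is closed in $\mathscr{G}^0$, since $q\colon\mathscr{G}^0\to\mathscr{G}^0/\mathscr{G}$ is a quotient map with $q^{-1}(q(u))=\orb_u$.

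For necessity, assume $\Bbbk\mathscr{G}$ is CCR and fix $u$. I would apply Proposition~\ref{p:ccrgcrnice} to $M=\Bbbk\orb_u$: each $1_U$, $U\subseteq\mathscr{G}^0$ compact open, acts on $M$ as the diagonal projection onto the span of the orbit points lying in $U$, so finiteness of the rank forces $U\cap\orb_u$ to be finite for every compact open $U$. Because the compact open sets form a basis and $\mathscr{G}^0$ is Hausdorff, a shrinking argument (shrink a compact open neighbourhood of a point to avoid the finitely many other orbit points it meets) then makes $\orb_u$ both discrete and closed in $\mathscr{G}^0$; hence $\mathscr{G}^0/\mathscr{G}$ is $T_1$. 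With $\orb_u$ now closed and invariant, $\mathscr{G}|_{\orb_u}$ is a discrete transitive groupoid (local homeomorphy of $s$ over a discrete base) and restriction of functions realises $\Bbbk\mathscr{G}|_{\orb_u}$ as a quotient of $\Bbbk\mathscr{G}$, hence CCR by Proposition~\ref{p:quotients.ccr}; by Proposition~\ref{p:discrete.groupoid} it is $M_{\orb_u}(\Bbbk G_u)$, so $\Bbbk G_u$ is CCR by Corollary~\ref{c:matrix.amp}.

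For sufficiency, assume $\mathscr{G}^0/\mathscr{G}$ is $T_1$ and all $\Bbbk G_u$ are CCR, and take an arbitrary simple $\Bbbk\mathscr{G}$-module $M=\Gamma_c(E)$ via the sheaf equivalence. By Proposition~\ref{p:simple.sheaf}, choosing $u\in\mathrm{supp}(E)$, the closed invariant set $X=\overline{\mathrm{supp}(E)}$ equals $\overline{\orb_u}$, which by $T_1$ is $\orb_u$ itself, and the $\Bbbk\mathscr{G}$-action on $M$ factors through $\Bbbk\mathscr{G}\to\Bbbk\mathscr{G}|_{\orb_u}$ with $M$ simple over the target. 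Since $\orb_u$ is closed and $\mathscr{G}$ is second-countable it is discrete (Lemma~\ref{lem:discreteorbits}), so $\mathscr{G}|_{\orb_u}$ is discrete transitive and $\Bbbk\mathscr{G}|_{\orb_u}\cong M_{\orb_u}(\Bbbk G_u)$ is CCR by Corollary~\ref{c:matrix.amp}; therefore it acts on $M$ by finite rank operators over $\mathrm{End}_{\Bbbk\mathscr{G}|_{\orb_u}}(M)$, and since the action factors through the surjection this endomorphism ring coincides with $\mathrm{End}_{\Bbbk\mathscr{G}}(M)$ and the operators agree, so Proposition~\ref{p:ccrgcrnice} gives that $\Bbbk\mathscr{G}$ is CCR. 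The final ``in particular'' then drops out: when $\Bbbk=\mathbb{C}$ and $\mathscr{G}$ is second-countable every isotropy group $G_u$ is countable, so virtual abelianness makes $\mathbb{C}G_u$ CCR by Corollary~\ref{c:CG.complex}, and the isotropy hypothesis in the main equivalence becomes automatic.

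The part I expect to take the most care is the bookkeeping in the sufficiency direction: confirming that ``$T_1$ plus simple sheaf'' genuinely concentrates $M$ on a single closed orbit, and then tracking the finite-rank property and its ambient division ring through both the matrix isomorphism of Proposition~\ref{p:discrete.groupoid} and the quotient map through which a simple module's action factors. The worry about $\mathscr{G}^1$ being non-Hausdorff never actually bites, since every reduction used sits over a closed orbit, which second countability makes discrete, so at that stage the Steinberg algebra is just the honest discrete-groupoid (matrix-over-group-ring) algebra.
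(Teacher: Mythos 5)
Your proposal is correct and follows essentially the same route as the paper: necessity of $T_1$ via the finite-rank action on the orbit module $\Bbbk\orb_u$ (whose endomorphism ring is $\Bbbk$ by Lemma~\ref{l:endo.trivial}) and a Hausdorff shrinking argument, necessity of CCR isotropy via the quotient onto $\Bbbk\mathscr G|_{\orb_u}$ for the now-closed (hence discrete) orbit, and sufficiency via the sheaf picture and Proposition~\ref{p:simple.sheaf} concentrating a simple module on a single closed orbit. The only cosmetic difference is that you phrase the final step through the finite-rank criterion of Proposition~\ref{p:ccrgcrnice} while the paper invokes the definition of CCR directly; these are interchangeable.
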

\begin{proof}
Begin by assuming that $\Bbbk\G$ is CCR. First we show  that $\G^0/\G$ is $T_1$. Consider an orbit $\orb_u$ and the corresponding simple $\Bbbk\mathscr G$-module $\Bbbk\orb_u$ induced from the trivial representation of $G_u$.  In light of Lemma~\ref{l:endo.trivial}, we must have that each element of $\Bbbk\mathscr G$ acts on $\Bbbk\orb_u$ as a finite rank operator over $\Bbbk$.  Let $v\in \mathscr G^0\setminus \orb_u$ and let $U$ be a compact open neighborhood of $v$.  Then  a basis for $1_U\Bbbk \orb_u$ is $U\cap \orb_u$ and hence this set must be finite.  But $\mathscr G^0$ is Hausdorff and $v\notin \orb_u$ so we can find a smaller neighborhood $V\subseteq U$ of $v$ with $V\cap \orb_u=\emptyset$.  Thus $\orb_u$ is closed.  This shows that $\G^0/\G$ is $T_1$.

	Next we show that if $\Bbbk\G$ is CCR, then the isotropy group rings $\Bbbk G_u$ are CCR.  First note that  since $\orb_u$ is closed, it is discrete in the relative topology by Lemma~\ref{lem:discreteorbits}.  Then there is a surjective homomorphism $\Bbbk\mathscr G\to \Bbbk \mathscr G|_{\orb_u}$ because $\orb_u$ is closed.  Hence $\Bbbk \mathscr G|_{\orb_u}$ is CCR (as CCR is closed under taking quotient rings).  But $\mathscr G|_{\orb_u}$ is a discrete transitive groupoid and so $\Bbbk G_u$ is CCR by Proposition~\ref{p:discrete.groupoid}.

	Lastly, we prove the converse. Assume that $\G^0/\G$ is $T_1$ and that  $\Bbbk G_u$ is CCR for every $u\in\G^0$. We show that $\Bbbk\G$ is CCR. Let $V$ be a simple $\Bbbk\mathscr G$-module.  Then we may assume without loss of generality that $V=\Gamma_c(E)$ where $(E,p)$ is a simple $\mathscr G$-sheaf of $\Bbbk$-vector spaces.  Let $u\in \mathrm{supp}(E)$.  Because $\orb_u$ is closed it follows from Proposition~\ref{p:simple.sheaf} that $\overline{\mathrm{supp}(E)} = \overline{\orb_u}=\orb_u$.  Hence $V$ is a $\Bbbk \mathscr G|_{\orb_u}$-module and the $\Bbbk\mathscr G$-action factors through the quotient map by Proposition~\ref{p:simple.sheaf}.   Now $\mathscr G|_{\orb_u}$ is a discrete transitive groupoid by Lemma~\ref{lem:discreteorbits} and hence $\Bbbk\mathscr G|_{\orb_u}$ is CCR by Proposition~\ref{p:discrete.groupoid} and the hypothesis on $\Bbbk G_u$.  Thus $\Bbbk\mathscr G/\mathrm{ann}(V)\cong \Bbbk\mathscr G|_{\orb_u}/\mathrm{ann}(V)$  is simple with a minimal left ideal.  We conclude that $\Bbbk\mathscr G$ is CCR.
\end{proof}


\section{GCR characterization}\label{sec:GCRcharacterization}

Let $\Bbbk$ be a field and $\mathscr G$ an ample groupoid.  The set $\prim(\Bbbk \mathscr G)$ of primitive ideals of $\Bbbk \mathscr G$ is a $T_0$ topological space with the hull-kernel (or Jacobson) topology.  The basic neighborhoods in this topology are of the form \[D(f)=\{I\in \prim(\Bbbk \mathscr G)\mid f\notin I\}\] where $f\in \Bbbk\mathscr G$.

\begin{lem}\label{l:orbit.cont}
Let $u\in \mathscr G^0$ and let $\orb_u$ denote the orbit of $u$. The map $\Phi\colon \mathscr G^0\to \prim(\Bbbk \mathscr G)$ given by
\[\Phi(u) = \mathrm{ann}(\Bbbk \orb_u)\]
 is continuous and hence induces a continuous mapping $\varphi\colon \G^0/\G\to \prim(\Bbbk\G)$.	
\end{lem}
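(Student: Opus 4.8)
The plan is to show directly that the preimage under $\Phi$ of a basic open set $D(f)$ is open in $\mathscr G^0$. Fix $f\in\Bbbk\mathscr G$. By definition, $u\in\Phi^{-1}(D(f))$ if and only if $f\notin\mathrm{ann}(\Bbbk\orb_u)$, i.e.\ $f$ acts nontrivially on the simple module $\Bbbk\orb_u$. Using the explicit description of the action recalled before Lemma~\ref{l:endo.trivial}, namely $fv=\sum_{\gamma\colon v\to w}f(\gamma)w$ for $v\in\orb_u$, this says there is some $v\in\orb_u$ with $fv\neq 0$. So the first step is to reformulate the condition ``$f\notin\mathrm{ann}(\Bbbk\orb_u)$'' as: there exists $v$ in the orbit of $u$ and an arrow $\gamma$ with source $v$ such that, after collecting terms, the coefficient of some target $w$ in $fv$ is nonzero.

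The second step is to make this condition manifestly local in $u$. Write $f=\sum_{i=1}^n c_i 1_{U_i}$ with the $U_i$ compact open bisections. For a unit $v$, the element $fv=\sum_{\{i\,:\,v\in s(U_i)\}}c_i\,(U_i\cdot v)$; this is nonzero precisely when the partition of the relevant indices by the value $U_i\cdot v$ yields some block whose coefficients do not sum to zero. The key observation is that the set of $v\in\mathscr G^0$ for which $fv\neq 0$ is open: indeed, since $\mathscr G^0$ is Hausdorff with a basis of compact open sets and each $U_i$ is a bisection, one can cover $\mathscr G^0$ by compact open sets $W$ small enough that on $W$ the ``incidence pattern'' of the $U_i$ is constant (each $s(U_i)$ is clopen, and the coincidence loci $\{v: U_i\cdot v=U_j\cdot v\}=s(U_i\cap U_j)\cup(s(U_i)^c\cap s(U_j)^c)$, or more carefully $s(U_iU_j^{-1}U_j)$-type sets, are clopen), so $\{v:fv\neq 0\}$ is a union of such $W$'s. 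Hence $Z_f:=\{v\in\mathscr G^0: fv\neq 0\}$ is open and invariant (it is a union of orbits, since $fv\neq 0$ for one $v$ in an orbit forces $f$ to act nonzero on the whole simple module $\Bbbk\orb_v$). Then $\Phi^{-1}(D(f))=Z_f$, which is open, proving continuity of $\Phi$.

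The third step is the descent to the quotient: since $Z_f$ is invariant, $\Phi$ is constant on orbits (two units in the same orbit give the same simple module $\Bbbk\orb_u$, hence the same annihilator), so $\Phi$ factors through the quotient map $q\colon\mathscr G^0\to\mathscr G^0/\mathscr G$ as $\Phi=\varphi\circ q$ for a unique map $\varphi$. Continuity of $\varphi$ is then automatic: $\varphi^{-1}(D(f))=q(Z_f)$ and $q$ is an open map (as noted in the proof of Proposition~\ref{p:locally.closed}, $q$ is open because $q^{-1}(q(U))=r(s^{-1}(U))$ is open), and since $Z_f$ is saturated, $q^{-1}(\varphi^{-1}(D(f)))=Z_f$ is open, so $\varphi^{-1}(D(f))$ is open by definition of the quotient topology.

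The main obstacle I expect is the second step: verifying cleanly that $\{v:fv\neq 0\}$ is open when $\mathscr G$ is not assumed Hausdorff. The point is that the coincidence sets $\{v\in s(U_i)\cap s(U_j): U_i\cdot v = U_j\cdot v\}$ must be shown to be open (in fact clopen in $s(U_i)\cap s(U_j)$), and this needs the bisection structure: $U_i\cdot v=U_j\cdot v$ iff the unique arrows out of $v$ in $U_i$ and $U_j$ have the same range, which happens iff $v\in s\bigl((U_i U_j^{-1})\cap \mathscr G^0\bigr)$-type clopen set, using that $\mathscr G^0$ is clopen in $\mathscr G^1$ and $U_iU_j^{-1}$ is a compact open bisection. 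Once this bookkeeping is in place the rest is routine.
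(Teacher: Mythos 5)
Your overall strategy (show $\Phi^{-1}(D(f))$ is open by analysing the set $Z_f=\{v\in\mathscr G^0: fv\neq 0\}$, then descend via the open quotient map) is a reasonable open-sets reformulation of what the paper does with nets, and the conclusion that $Z_f$ is open is in fact true. But the mechanism you propose for proving it is wrong, and it is wrong at exactly the crux of the lemma. You claim the coincidence loci $\{v\in s(U_i)\cap s(U_j): U_i\cdot v=U_j\cdot v\}$ are open (indeed clopen), so that the ``incidence pattern'' of the $U_i$ is locally constant. This fails: the coincidence locus is the agreement set of two continuous maps into the Hausdorff space $\mathscr G^0$ (equivalently, essentially the fixed-point set of the partial homeomorphism $\Phi_{U_j^{-1}U_i}$), hence \emph{closed} but not open in general. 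For example, in a transformation groupoid $\mathbb Z\ltimes X$ with $X$ the Cantor set and the generator acting with a non-isolated fixed point $x_0$, taking $U_i$ the unit space and $U_j$ the graph of the generator gives coincidence locus $\{x_0\}\cup\mathrm{Fix}$, which need not be open. So you cannot shrink to a neighborhood on which the partition of indices by target is constant. The correct repair — and this is precisely the content of the computation \eqref{eq:1} in the paper's proof — is that near $v_0$ the partition can only \emph{refine} (because the \emph{disagreement} loci are open, and the sets $s(U_i)$ are clopen), and a partition block whose coefficients sum to a nonzero value cannot split into sub-blocks all of whose coefficient sums vanish. Without this observation your step 2 does not go through.

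A second, smaller error: $Z_f$ is not invariant, and $\Phi^{-1}(D(f))\neq Z_f$ in general (take $f=1_V$ for $V\subseteq\mathscr G^0$ compact open and not invariant; then $Z_f=V$). What is true is that $f\notin\mathrm{ann}(\Bbbk\orb_u)$ iff $\orb_u$ meets $Z_f$, so $\Phi^{-1}(D(f))=r(s^{-1}(Z_f))$ is the \emph{saturation} of $Z_f$, which is open whenever $Z_f$ is because $s$ is a local homeomorphism and $r$ is open. Your justification (``$fv\neq 0$ for one $v$ in an orbit forces $f$ to act nonzero on the whole module'') conflates acting nontrivially on the module with acting nontrivially on every basis vector. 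This is easily patched, and your step 3 survives verbatim with $q(Z_f)$ in place of $q$ of the saturation; but as written the identity $\Phi^{-1}(D(f))=Z_f$ is false.
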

\begin{proof}
The map $\Phi$ factors through $\mathscr G^0/\mathscr G$.  Our goal is to establish continuity of $\Phi$, and hence of the induced map $\varphi\colon \mathscr G^0/\mathscr G\to \prim(\Bbbk \mathscr G)$.  To do this, we must show that the preimage of a closed set is closed.  Thus we need to show that if $u_n\to u$ is a convergent net and $f\in \mathrm{ann}(\Bbbk \orb_{u_n})$ for all $n$, then $f\in \mathrm{ann}(\Bbbk \orb_u)$.

 Let us assume that $f\notin \mathrm{ann}(\Bbbk \orb_u)$ and derive a contradiction.  Then there exists $v\in \orb_u$ such that $fv\neq 0$.  Since $\Bbbk\orb_u$ is simple, we can find $g\in \Bbbk \mathscr G$ with $gu=v$ and hence $(f\ast g)u\neq 0$.  Note that $f\ast g\in \mathrm{ann}(\Bbbk \orb_{u_n})$ for all $n$ and so replacing $f$ by $f\ast g$, we may assume without loss of generality that $fu\neq 0$ (and we now reset the notation $v$).  Then
 \[fu = \sum_{w\in \orb_u}\sum_{\gamma\colon u\to w} f(\gamma)w.\]

 Since $0\neq fu$, there exists $v\in \orb_u$ with \[\sum_{\gamma\colon u\to v} f(\gamma)\neq 0.\]  Write $f=\sum_{i=1}^r c_i1_{U_i}$ with $c_i\in \Bbbk$ and the $U_i$ compact open bisections.  In particular, for each $i$, there is at most one arrow with source $u$ in $U_i$.  Thus,
since $\mathscr G^0$ is Hausdorff, we can find a compact open neighborhood $V\subseteq\mathscr G^0$ of $v$ so that no arrow of any $U_i$ with source $u$  ends at an element of $V$ other than $v$.    Then we have
\begin{align*}
1_V\ast f &= \sum_{i=1}^rc_i1_{VU_i},\\
(1_V\ast f)u  &= \sum_{\gamma\colon u\to v}f(\gamma)v\neq 0
\end{align*}
  and $1_V\ast f\in \mathrm{ann}(\Bbbk \orb_{u_n})$ for all $n$.  Thus replacing $f$ by $1_V\ast f$, we may assume without loss of generality that $fu\neq 0$ and $f=\sum_{i=1}^r c_i1_{U_i}$ with the $U_i$ compact open bisections such that any arrow $\gamma_i\in U_i$ with source $u$ ends at same vertex $v\in \orb_u$, independent of $i$.  In particular, we have \[0\neq fu =\sum_{\gamma\colon u\to v}f(\gamma)v.\]

 Since the source map is open, if $u\in s(U_i)$, then $u_n\in s(U_i)$ for $n$ sufficiently large. Also $s(U_i)$ is compact and hence closed (since $\mathscr G^0$ is Hausdorff).   Thus if $u\notin s(U_i)$, then $u_n\notin s(U_i)$ for $n$ large enough.  Since there are only finitely many $U_i$, we can choose $n$ sufficiently large so that, for $i=1,\ldots, r$, we have that $u_n\in s(U_i)$ if and only if $u\in s(U_i)$.    Moreover, since $U_i$ is a bisection if $u_n\in s(U_i)$, then there is a unique $\gamma_{i,n}\in U_i$ with $s(\gamma_{i,n})=u_n$.

 We now compute
 \begin{equation}\label{eq:1}
 \begin{split}
 0&=fu_n\\
 &=\sum_{w\in \orb_{u_n}} \sum_{\gamma\colon u_n\to w} f(\gamma)w\\
  &=\sum_{w\in \orb_{u_n}}\sum_{\{i\mid u_n\in s(U_i), r(\gamma_{i,n})=w\}}c_iw.
\end{split}
\end{equation}
But then
\[0\neq fu =\sum_{\{i\mid u\in s(U_i)\}}c_iv =   \left(\sum_{w\in \orb_{u_n}}\sum_{\{i\mid u_n\in s(U_i), r(\gamma_{i,n})=w\}}c_i\right)v=0\]
by choice of $n$ and \eqref{eq:1} (since $u\in s(U_i)$ if and only if $u_n\in s(U_i)$, in which case there is a unique $\gamma_i\in U_i$ with $s(\gamma_i)=u$ and a unique $\gamma_{i,n}\in U_i$ with $s(\gamma_{i,n})=u_n$, and moreover $r(\gamma_i)=v$).  This contradiction completes the proof of the continuity of $\Phi$, and hence $\varphi$.
\end{proof}

The following is our second main theorem in the paper. Similarly to the CCR case,  it characterizes GCR Steinberg algebras in terms of the topology on the orbit space and the isotropy groups, and is also an analogue of a  GCR characterization for groupoid $C^*$-algebras
(see~\cite[Theorem 7.1]{Cl07},~\cite[Theorem 4.3]{vW18}).

\begin{thm}\label{t:GCRthm}
Let $\Bbbk$ be a field and $\mathscr G$ a second-countable ample groupoid.  The Steinberg algebra $\Bbbk\G$ is GCR if and only if the orbit space $\G^0/\G$ is $T_0$ and the isotropy group rings $\Bbbk G_u$ with $u\in \mathscr G^0$ are GCR. In particular, if $\G$ has virtually abelian isotropy groups, then $\mathbb C\G$ is GCR if and only if $\mathscr G^0/\G$ is $T_0$.
\end{thm}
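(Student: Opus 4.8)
The plan is to mirror the structure of the proof of the CCR theorem (Theorem~\ref{t:CCRthm}), replacing ``$T_1$'' by ``$T_0$'', ``closed orbit'' by ``locally closed orbit'', and ``finite rank action'' by ``contains a nonzero finite rank operator''. The key extra ingredient, not needed in the CCR case, will be the Mackey--Glimm dichotomy of Proposition~\ref{p:locally.closed}, which says that for a second-countable ample groupoid the orbit space is $T_0$ if and only if every orbit is locally closed if and only if every orbit is discrete in the relative topology; and the continuity statement of Lemma~\ref{l:orbit.cont}, which will be used to pass information from orbit closures back to single orbits. For the last sentence of the theorem, once the main equivalence is established, one simply invokes Corollary~\ref{c:CG.complex}: every countable virtually abelian group has CCR (hence GCR) complex group ring, so the isotropy condition is automatic when $\mathscr G$ is second-countable with virtually abelian isotropy, leaving only the $T_0$ condition on $\mathscr G^0/\mathscr G$.

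For the forward direction, assume $\Bbbk\mathscr G$ is GCR. To show $\mathscr G^0/\mathscr G$ is $T_0$: using Lemma~\ref{l:orbit.cont} there is a continuous map $\varphi\colon \mathscr G^0/\mathscr G\to \prim(\Bbbk\mathscr G)$ sending an orbit $\orb_u$ to $\mathrm{ann}(\Bbbk\orb_u)$; since $\Bbbk\mathscr G$ is GCR, Proposition~\ref{p:gcr.spectrum} tells us that $\prim(\Bbbk\mathscr G)$ is in bijection with isomorphism classes of simple modules, and the modules $\Bbbk\orb_u$ for distinct orbits are non-isomorphic (they have different supports as the unit space sees them via the action of $1_U$'s), so $\varphi$ is injective; as $\prim(\Bbbk\mathscr G)$ is always $T_0$ and $\varphi$ is a continuous injection, $\mathscr G^0/\mathscr G$ is $T_0$. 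For the isotropy condition: given $u$, by Proposition~\ref{p:locally.closed} the orbit $\orb_u$ is locally closed, so there is an open invariant $W\subseteq \mathscr G^0$ with $\orb_u$ closed in $W$; then $\Bbbk\mathscr G|_W$ is an ideal of $\Bbbk\mathscr G$, hence GCR by Corollary~\ref{c:ideal.inherit}, and $\Bbbk\mathscr G|_W$ surjects onto $\Bbbk\mathscr G|_{\orb_u}$ (restriction to the closed invariant subset $\orb_u$), which is therefore GCR by Proposition~\ref{p:quotients.ccr}; since $\orb_u$ is discrete by Proposition~\ref{p:locally.closed}, $\mathscr G|_{\orb_u}$ is a discrete transitive groupoid, so $\Bbbk G_u$ is GCR by Proposition~\ref{p:discrete.groupoid}.

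For the converse, assume $\mathscr G^0/\mathscr G$ is $T_0$ and each $\Bbbk G_u$ is GCR, and let $V$ be a simple $\Bbbk\mathscr G$-module. By the sheaf equivalence we may take $V=\Gamma_c(E)$ for a simple $\mathscr G$-sheaf $(E,p)$. Fix $u\in\mathrm{supp}(E)$. By Proposition~\ref{p:simple.sheaf}, $X=\overline{\orb_u}$ is a closed invariant set, $\Bbbk\mathscr G|_{\mathscr G^0\setminus X}$ annihilates $V$, and $V$ is a simple $\Bbbk\mathscr G|_X$-module. Now use $T_0$: by Proposition~\ref{p:locally.closed}, $\orb_u$ is locally closed, hence open in $X=\overline{\orb_u}$; thus $\mathscr G|_{\orb_u}$ is an open subgroupoid of $\mathscr G|_X$ and $\Bbbk\mathscr G|_{\orb_u}$ is an ideal of $\Bbbk\mathscr G|_X$. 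Since $\orb_u$ is discrete (Proposition~\ref{p:locally.closed}), $\mathscr G|_{\orb_u}$ is discrete transitive, so by Proposition~\ref{p:discrete.groupoid} and the hypothesis, $\Bbbk\mathscr G|_{\orb_u}\cong M_{\orb_u}(\Bbbk G_u)$ is GCR. The point is that $V$, restricted to the ideal $\Bbbk\mathscr G|_{\orb_u}$, is nonzero: one checks $\Bbbk\mathscr G|_{\orb_u}\cdot V\neq 0$ because $u\in\mathrm{supp}(E)\cap\orb_u$ and sections can be found supported near $u$ (using that $\orb_u$ is open in $X$ and the stalk at $u$ is nonzero). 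By Lemma~\ref{l:restrict.ideals}, $V$ is then a simple $\Bbbk\mathscr G|_{\orb_u}$-module with $\mathrm{End}_{\Bbbk\mathscr G}(V)=\mathrm{End}_{\Bbbk\mathscr G|_{\orb_u}}(V)=:D$; since $\Bbbk\mathscr G|_{\orb_u}$ is GCR, some element acts on $V$ as a nonzero finite rank operator over $D$, and this same element, viewed in $\Bbbk\mathscr G$, witnesses that the image of $\Bbbk\mathscr G$ in $\mathrm{End}_D(V)$ contains a nonzero finite rank operator. By Proposition~\ref{p:ccrgcrnice}(2), $\Bbbk\mathscr G$ is GCR. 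The main obstacle I anticipate is the bookkeeping in the forward direction showing $\varphi$ is injective (i.e.\ that distinct orbits give non-isomorphic simple modules $\Bbbk\orb_u$, which requires recovering the orbit from the annihilator using the idempotents $1_U$), and, in the converse, verifying cleanly that the restriction of the simple module $V$ to the ideal corresponding to the open dense orbit $\orb_u$ in $X$ is nonzero — this is where the locally-closed/$T_0$ hypothesis is doing its real work.
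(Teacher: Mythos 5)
Your proposal is correct and follows essentially the same route as the paper: injectivity of the continuous map $\varphi\colon \G^0/\G\to\prim(\Bbbk\G)$ via Proposition~\ref{p:gcr.spectrum} for the $T_0$ direction, reduction to the discrete transitive groupoid $\mathscr G|_{\orb_u}$ via Proposition~\ref{p:locally.closed} and Proposition~\ref{p:discrete.groupoid} for the isotropy condition, and the sheaf picture together with Lemma~\ref{l:restrict.ideals} for the converse. The only cosmetic difference is that for the isotropy condition you pass to an ideal of $\Bbbk\G$ and then to a quotient, whereas the paper first takes the quotient $\Bbbk\mathscr G|_{\overline{\orb_u}}$ and then the ideal $\Bbbk\mathscr G|_{\orb_u}$; both orderings work for the same reasons.
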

\begin{proof}
Assume first that $\Bbbk\G$ is GCR.  We claim that the continuous mapping $\varphi\colon \mathscr G^0/\G\to \prim(\Bbbk\G)$ sending $\orb_u$ to $\mathrm{ann}(\Bbbk \orb_u)$ from Lemma~\ref{l:orbit.cont} is injective.  By Proposition~\ref{p:gcr.spectrum} it is enough to show that if $\orb_u\neq \orb_v$, then $\Bbbk\orb_u\ncong \Bbbk \orb_v$.  Indeed, suppose $\Psi\colon \Bbbk\orb_u\to \Bbbk \orb_v$ is an isomorphism.  Let $\Psi(u) = \sum_{w\in \orb_v} c_ww$ with the sum finite.  Then since $u\notin \orb_v$ and $\mathscr G^0$ is Hausdorff, we can find a compact open neighborhood $U$ of $u$ containing  no element of the finite support of $\Psi(u)$.  Then $\Psi(u) = \Psi(1_Uu)=1_U\Psi(u) = \sum_{w\in \orb_v}c_w1_Uw=0$, a contradiction.
Since $\prim(\Bbbk\G)$ is $T_0$ and $\mathscr G^0/\G$ embeds continuously in it, we conclude that $\G^0/\G$ is $T_0$.

Next we verify that $\Bbbk G_u$ is GCR.  Note that $\Bbbk \mathscr G|_{\overline{\orb_u}}$ is a quotient of $\Bbbk\G$ and hence is GCR.  By Proposition~\ref{p:locally.closed}, we have that $\mathscr G|_{\orb_u}$ is open in $\mathscr G|_{\overline{\orb_u}}$ (in the relative topology) and so $\mathscr G|_{\orb_u}$ is discrete in the relative topology by Lemma~\ref{lem:discreteorbits}.  Also $\Bbbk\mathscr G|_{\orb_u}$ is an ideal in $\Bbbk\mathscr G|_{\overline{\orb_u}}$ with local units (as $\orb_u$ is open in $\overline{\orb_u}$) and hence is GCR by Corollary~\ref{c:ideal.inherit}.  Thus, since $\mathscr G|_{\orb_u}$ is discrete and transitive, we deduce that $\Bbbk G_u$ is GCR by Proposition~\ref{p:discrete.groupoid}.

Suppose now that $\mathscr G^0/\G$ is $T_0$ and each isotropy group ring is GCR and let $V$ be a simple $\Bbbk\G$-module. Put $D=\mathrm{End}_{\Bbbk\G}(V)$.  We may assume without loss of generality that $V=\Gamma_c(E)$ with $(E,p)$ a simple $\mathscr G$-sheaf of $\Bbbk$-vector spaces.  Let $u\in \mathrm{supp}(E)$.  Then $V$ is a $\Bbbk \mathscr G|_{\overline{\orb_u}}$-module and the $\Bbbk\mathscr G$-action factors through the quotient map by Proposition~\ref{p:simple.sheaf}.  By Proposition~\ref{p:locally.closed}, $\mathscr G|_{\orb_u}$ is an open subgroupoid of $\mathscr G|_{\overline{\orb_u}}$ in the relative topology, and hence  $\mathscr G|_{\orb_u}$ is a discrete transitive groupoid by Lemma~\ref{lem:discreteorbits} in the relative topology.  Thus $\Bbbk\mathscr G|_{\orb_u}$ is GCR by Proposition~\ref{p:discrete.groupoid}.  Moreover, it is an ideal in $\Bbbk \mathscr G|_{\overline{\orb_u}}$ and is a ring with local units. Since $u\in \mathrm{supp}(E)$, there exists $t\in \Gamma_c(E)$ with $t(u)\neq 0_u$. Indeed, take $0_u\neq e\in E_u$ and choose $W$ a compact open neighborhood of $e$ with $p|_W$ a homeomorphism; then extending $(p|_W)^{-1}$ to be zero outside $p(W)$ gives a section $t\in \Gamma_c(E)$ with $t(u)=e\neq 0_u$.  Since $\orb_u$ is open in $\overline{\orb_u}$, we can find a compact open neighborhood $U$ of $u$ with $U\cap \overline{\orb_u}\subseteq \orb_u$.  Note that $1_Ut = t|_U\neq 0$ as $t(u)\neq 0_u$.  Thus $1_{U\cap \overline{\orb_u}}\in \Bbbk G|_{\orb_u}$ does not annihilate $V$, viewed as a $\Bbbk \mathscr G|_{\overline{\orb_u}}$-module.  We conclude that $V$ is a simple $\Bbbk \mathscr G|_{\orb_u}$-module with $\mathrm{End}_{\Bbbk \mathscr G|_{\orb_u}}(V)=D$ by Lemma~\ref{l:restrict.ideals}.  Thus some element of $\Bbbk \mathscr G|_{\orb_u}$ acts on $V$ as a non-zero element of finite rank over $D$ and hence some element of $\Bbbk \G$ acts on $V$ as a non-zero element finite rank over $D$ (as the action on $V$ factors through the surjective homomorphism to $\Bbbk\mathscr G|_{\overline{\orb_u}}$, which contains $\Bbbk \G|_{\orb_u}$).  This completes the proof that $\Bbbk \G$ is GCR.
\end{proof}

\section{CCR and GCR higher-rank graph algebras}\label{sec:higerRankgraphs}
Countable finitely aligned higher-rank graphs include all (countable) directed graphs as well as row-finite and locally convex (countable) higher-rank graphs.  They were introduced in~\cite{RSY04} where  the authors study higher-rank graph $C^*$-algebras.
In~\cite{CP17}, the authors introduced Kumjian-Pask algebras of finitely aligned higher-rank graphs, a purely algebraic analogue of the higher-rank graph $C^*$-algebras.   Given a finitely aligned higher-rank graph $\Lambda$, they show that the Kumjian-Pask algebra KP$_{\Bbbk}(\Lambda)$ is isomorphic to the Steinberg algebra
$\Bbbk\G_{\Lambda}$ where $\G_{\Lambda}$ is
the \emph{boundary path groupoid} (from~\cite{Y07}).  We adopt the notation of~\cite{CP17} and refer the reader there for more details.

The unit space of $\G_{\Lambda}$ is the boundary path space $\partial \Lambda$.  A base for the topology on $\partial \Lambda$ is given by the \emph{cylinder sets}:
For $\mu \in \Lambda$ and finite $F \subseteq s(\mu)\Lambda$ we define
\[Z(\mu) : = \{\mu x \in \partial\Lambda\}  \quad \text{ and } \quad Z(\mu \setminus F) := Z(\mu) \setminus \bigcup_{\nu \in F}Z(\mu\nu).\]

For each $x \in \partial\Lambda$, $x$ is a path and $\orb_x$ is the collection of paths in $\partial\Lambda$ that are eventually the same as $x$, that is, $y \in \orb_x$ if and only if there exists
$p,q \in \mathbb{N}^k$ such that $\sigma^p(x)=\sigma^q(y)$ where $\sigma$ is the shift map.

\begin{prop}
\label{p:condM}
Let $\Lambda$ be a countable finitely aligned higher-rank graph  and $\G_{\Lambda}$ the boundary path groupoid.  Then $\G^{0}_{\Lambda}/\G_{\Lambda}$ is $T_1$ if and only if the following condition is satisfied:
\begin{equation}
\label{condM}	
\text{for each $x \in \partial\Lambda$ and $v \in \Lambda^0$ we have $Z(v)\cap \orb_x$ is finite.}
\end{equation}
\end{prop}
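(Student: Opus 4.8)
The plan is to translate the topological condition ``$\G^0_\Lambda/\G_\Lambda$ is $T_1$'' into the combinatorial statement \eqref{condM} by working directly with the cylinder-set basis for $\partial\Lambda$ and using the fact that $T_1$ for the orbit space is equivalent to every orbit being closed in $\G^0_\Lambda=\partial\Lambda$. So first I would record that $\G^0_\Lambda/\G_\Lambda$ is $T_1$ if and only if $\orb_x$ is closed in $\partial\Lambda$ for every $x\in\partial\Lambda$ (points in a quotient are closed exactly when their preimages are closed, and $q$ is continuous and open by the remark in the proof of Proposition~\ref{p:locally.closed}).

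For the forward direction, assume every orbit is closed and fix $x\in\partial\Lambda$ and $v\in\Lambda^0$. Since $\orb_x$ is closed and $\partial\Lambda$ is a locally compact Hausdorff space with a basis of compact open sets (being the unit space of an ample groupoid), while the relevant groupoid is second-countable, Lemma~\ref{lem:discreteorbits} applies: a closed orbit is a Baire space in the relative topology, hence discrete in the relative topology. Now $Z(v)$ is a compact open subset of $\partial\Lambda$, so $Z(v)\cap\orb_x$ is a compact subset of the discrete space $\orb_x$ (with its relative topology), hence finite. This gives \eqref{condM}.

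For the converse, assume \eqref{condM} and fix $x\in\partial\Lambda$; I want to show $\orb_x$ is closed, i.e. every $y\in\partial\Lambda\setminus\orb_x$ has a neighborhood missing $\orb_x$. Take a basic neighborhood $Z(\mu\setminus F)\subseteq Z(\mu)$ of $y$; since $y=\mu y'$ for some $y'$ and $v:=s(\mu)$, the set $Z(v)$ contains $\sigma^{|\mu|}(y)$, so by \eqref{condM} the set $Z(v)\cap\orb_x$ is finite. The key step is to shrink $Z(\mu\setminus F)$ to a cylinder-type neighborhood of $y$ that avoids the finitely many points of $\orb_x$ meeting $Z(\mu)$: each such point is distinct from $y$, and in a Hausdorff space with the cylinder-set basis one separates $y$ from finitely many other points by passing to a sufficiently deep cylinder $Z(\mu\tau\setminus F')$ (using that paths that are eventually equal to $x$ but not equal to $y$ must differ at some finite level, so a long enough initial segment of $y$ excludes them). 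This produces a neighborhood of $y$ disjoint from $\orb_x$, so $\orb_x$ is closed.

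The main obstacle is the converse, specifically the last shrinking step: one must be careful that the finitely many ``bad'' paths in $Z(v)\cap\orb_x$ can genuinely be separated from $y$ by a single basic open set, which requires understanding how the cylinder sets $Z(\mu)$ and $Z(\mu\setminus F)$ detect disagreement of boundary paths at finite stages, and handling the case where a bad path shares arbitrarily long initial segments with $y$ (which, since the two paths are genuinely different boundary paths, cannot actually happen, but this needs the explicit description of $\partial\Lambda$ and its topology from~\cite{CP17}). Once that separation lemma is in hand, both directions are short.
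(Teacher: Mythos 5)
Your argument is correct and follows essentially the same route as the paper: both directions reduce $T_1$ of $\G^{0}_{\Lambda}/\G_{\Lambda}$ to every orbit being closed in $\partial\Lambda$, with the forward implication obtained from closed $\Rightarrow$ discrete (Lemma~\ref{lem:discreteorbits}/Proposition~\ref{p:locally.closed}) together with compactness of $Z(v)$, and the converse obtained from the finiteness of $Z(v)\cap\orb_x$ guaranteed by \eqref{condM}. The ``separation'' step you flag as the main obstacle is not actually one: taking $\mu=r(y)$ so that \eqref{condM} applies directly, $Z(r(y))\cap\orb_x$ is a finite set not containing $y$, and Hausdorffness of $\partial\Lambda$ alone yields an open neighborhood of $y$ inside $Z(r(y))$ missing that finite set (no analysis of how deep cylinders detect disagreement is needed); the paper handles this same point by noting that a convergent sequence eventually lying in a finite subset of a Hausdorff space is eventually constant.
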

\begin{proof}
First suppose $\G^{0}_{\Lambda}/\G_{\Lambda}$ is $T_1$. Fix $x \in \partial\Lambda$ and $v \in \Lambda^0$.    Since $\G^{0}_{\Lambda}/\G_{\Lambda}$ is also $T_0$, Proposition~\ref{p:locally.closed} says $\orb_x$ is discrete in the relative topology.  By way of contradiction, suppose $Z(v) \cap \orb_x$ is infinite.  Then there exists a sequence $(x_n) \subseteq Z(v) \cap \orb_x$ where each term is distinct.
 Further, since $\orb_x$ is closed, $Z(v) \cap \orb_x$ is compact so $(x_n)$ has a subsequence that converges to $z \in Z(v) \cap \orb_x$.  But $\{z\}$ is open in $\orb_x$ so any sequence in $\orb_x$ converging to $z$ must eventually be the constant sequence $z,z,...$ which is a contradiction.

For the reverse implication, fix $x \in \partial\Lambda$.  To see that  $\G^{0}_{\Lambda}/\G_{\Lambda}$ is $T_1$, we show $\orb_x$ is closed in $\G^0$.  Suppose $(y_n) \subseteq \orb_x$ such that $y_n$ converges to $y$.  So $y_n$ is in the finite set $ Z(r(y)) \cap \orb_x$ eventually and hence the sequence is eventually constant.  Thus $y=y_n$ eventually and $\orb_x$ is closed.
\end{proof}

We are now in a position to characterize when the algebras associated to finitely aligned higher-rank graphs are CCR.  Since $\G_{\Lambda}$ is second countable and all of its isotropy groups are abelian (as they are subgroups of $\mathbb{Z}^k$), we combine Proposition~\ref{p:condM} with Theorem~\ref{t:CCRthm}, Corollary~\ref{c:CCR.group} and~\cite[Theorem~6.1]{Cl07} to get the following corollary.  This gives an alternate proof to Ephrem's directed graph $C^{\ast}$-algebra result~\cite[Theorem~5.5]{Eph04}, where he calls condition \eqref{condM} `condition M'.

\begin{cor}\label{c:ccrgraph}
 Let $\Lambda$ be a countable finitely aligned higher-rank graph and $\Bbbk$ an uncountable algebraically closed field.  Then the following are equivalent.
 \begin{enumerate}
  \item condition \eqref{condM} is satisfied;
  \item KP$_{\Bbbk}(\Lambda)$ is CCR;
  \item $C^{\ast}(\Lambda)$ is CCR.
 \end{enumerate}
\end{cor}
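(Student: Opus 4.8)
The plan is to reduce the equivalence to the three ingredients announced just before the statement: the characterization of $T_1$ orbit spaces in Proposition~\ref{p:condM}, the CCR characterization of Steinberg algebras in Theorem~\ref{t:CCRthm}, and the fact that all isotropy groups of $\G_\Lambda$ are abelian together with Corollary~\ref{c:CCR.group}; the $C^*$-side is handled by the analogous groupoid $C^*$-algebra result \cite[Theorem~6.1]{Cl07}. The key structural observations to invoke are that a countable finitely aligned higher-rank graph $\Lambda$ has a second-countable boundary path groupoid $\G_\Lambda$ (the cylinder sets $Z(\mu\setminus F)$ with $\mu\in\Lambda$ and $F$ finite form a countable base, and the compact open bisections are correspondingly countable), and that the isotropy group $G_x$ at any $x\in\partial\Lambda$ embeds in $\mathbb{Z}^k$ (via $p-q$ whenever $\sigma^p(x)=\sigma^q(x)$), hence is free abelian of rank at most $k$, in particular countable and abelian.

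First I would establish $(2)\Leftrightarrow(1)$. Since $\G_\Lambda$ is second-countable, Theorem~\ref{t:CCRthm} applies: $\Bbbk\G_\Lambda\cong \mathrm{KP}_{\Bbbk}(\Lambda)$ is CCR if and only if $\G_\Lambda^0/\G_\Lambda$ is $T_1$ and every $\Bbbk G_x$ is CCR. Because each $G_x$ is abelian of countable cardinality and $\Bbbk$ is uncountable algebraically closed, Corollary~\ref{c:CCR.group} (with trivial finite-index subgroup, $n=1$) gives that $\Bbbk G_x$ is automatically CCR — indeed every simple module is one-dimensional. So the isotropy condition is vacuous, and the CCR property of $\mathrm{KP}_{\Bbbk}(\Lambda)$ is equivalent to $\G_\Lambda^0/\G_\Lambda$ being $T_1$, which by Proposition~\ref{p:condM} is exactly condition \eqref{condM}. (Here I would cite the isomorphism $\mathrm{KP}_{\Bbbk}(\Lambda)\cong\Bbbk\G_\Lambda$ from \cite{CP17} to transport the statement to the Steinberg algebra.)

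Next, for $(3)\Leftrightarrow(1)$ I would quote \cite[Theorem~6.1]{Cl07}: for a second-countable locally compact Hausdorff groupoid, the groupoid $C^*$-algebra is CCR iff the orbit space is $T_1$ and the isotropy group $C^*$-algebras are CCR. The groupoid $\G_\Lambda$ is Hausdorff (for finitely aligned $\Lambda$), étale, amenable, and second-countable, so $C^*(\G_\Lambda)=C^*(\Lambda)$ fits the hypotheses. The isotropy $C^*$-algebras $C^*(G_x)$ are $C^*$-algebras of subgroups of $\mathbb{Z}^k$, hence commutative, hence CCR. So again the isotropy condition is automatic and $C^*(\Lambda)$ is CCR iff $\G_\Lambda^0/\G_\Lambda$ is $T_1$, i.e.\ iff \eqref{condM} holds by Proposition~\ref{p:condM}. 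Chaining the two equivalences through condition \eqref{condM} completes the proof, and I would remark that this recovers \cite[Theorem~5.5]{Eph04} for directed graphs, where \eqref{condM} is Ephrem's condition M.

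The routine part is verifying that $\G_\Lambda$ genuinely meets the standing hypotheses of Theorem~\ref{t:CCRthm} and of \cite[Theorem~6.1]{Cl07} — second countability, Hausdorffness, and the identification of the isotropy groups as subgroups of $\mathbb{Z}^k$ — all of which are standard facts about the boundary path groupoid recorded in \cite{CP17,Y07,RSY04}. The only real content beyond bookkeeping is noticing that the isotropy hypotheses in both the algebraic and the $C^*$-algebraic characterizations are automatically satisfied here because the isotropy groups are abelian; once that is observed, the main obstacle — if any — is simply ensuring the cardinality hypothesis on $\Bbbk$ is in force so that Corollary~\ref{c:CCR.group} applies, which is why the statement assumes $\Bbbk$ uncountable algebraically closed.
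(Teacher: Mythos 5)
Your proposal is correct and follows exactly the route the paper takes: it combines Proposition~\ref{p:condM}, Theorem~\ref{t:CCRthm}, and Corollary~\ref{c:CCR.group} (noting the isotropy groups are abelian subgroups of $\mathbb{Z}^k$, so the isotropy hypothesis is automatic) for the algebraic side, and \cite[Theorem~6.1]{Cl07} for the $C^{\ast}$-side. The extra details you supply on second countability and the embedding of the isotropy groups into $\mathbb{Z}^k$ are exactly the standing facts the paper relies on implicitly.
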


Since Leavitt path algebras are special cases of Kumjian-Pask algebras, Corollary~\ref{c:ccrgraph}, in particular, characterizes the CCR Leavitt path algebras.

Next we characterize the GCR higher rank graph algebras.  Since $\G^{0}_{\Lambda}/\G_{\Lambda}$ is $T_0$ if and only if orbits are discrete when endowed with the subspace topology, the  following proposition is immediate.
\begin{prop}
\label{p:condN}
Let $\Lambda$ be a countable finitely aligned higher-rank graph  and $\G_{\Lambda}$ the boundary path groupoid.  Then
$\G^{0}_{\Lambda}/\G_{\Lambda}$ is $T_0$ if and only if the following condition is satisfied
\begin{align}&\label{condN}\text{for each $x \in \partial\Lambda$, there exists $\mu \in \Lambda$ and finite $F \subseteq s(\mu)\Lambda$ such that }\\
&Z(\mu \setminus F) \cap \orb_x = \{x\}.\notag
\end{align}
\end{prop}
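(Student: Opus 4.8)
The plan is to reduce everything to the characterization of the $T_0$ property already available in the excerpt. By Proposition~\ref{p:locally.closed}, the orbit space $\G^0_\Lambda/\G_\Lambda$ is $T_0$ if and only if every orbit is discrete in the relative topology. So the whole content is the elementary topological translation: an orbit $\orb_x$ is discrete in the subspace topology from $\partial\Lambda$ if and only if every point of $\orb_x$ is isolated in $\orb_x$, and since the action of the inverse semigroup of compact open bisections is transitive on $\orb_x$ (and by homeomorphisms), by the argument in Lemma~\ref{lem:discreteorbits}(2) it suffices that the single point $x$ be isolated in $\orb_x$. Thus $\orb_x$ is discrete if and only if there is a basic open neighborhood $U$ of $x$ in $\partial\Lambda$ with $U\cap\orb_x=\{x\}$.

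The key step is then to recall that the cylinder sets $Z(\mu\setminus F)$ with $\mu\in\Lambda$ and $F\subseteq s(\mu)\Lambda$ finite form a base for the topology on $\partial\Lambda$ (as stated just before Proposition~\ref{p:condM}). Hence $x$ has \emph{some} open neighborhood meeting $\orb_x$ only in $x$ precisely when it has a \emph{basic} such neighborhood, i.e.\ precisely when condition~\eqref{condN} holds: there exist $\mu\in\Lambda$ and finite $F\subseteq s(\mu)\Lambda$ with $Z(\mu\setminus F)\cap\orb_x=\{x\}$. Running this equivalence over all $x\in\partial\Lambda=\G^0_\Lambda$ gives: every orbit is discrete $\iff$ condition~\eqref{condN} holds for every $x$. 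Combined with Proposition~\ref{p:locally.closed}, this is exactly the statement.

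I expect no real obstacle here — this is the routine packaging the paper itself flags as ``immediate''. The only points to be careful about are: (i) noting that $x\in Z(\mu\setminus F)$ is implicit in~\eqref{condN} (one needs $x$ itself to lie in the neighborhood, which is automatic since $Z(\mu\setminus F)\cap\orb_x=\{x\}$ forces $x\in Z(\mu\setminus F)$); and (ii) invoking the transitivity-by-homeomorphisms argument of Lemma~\ref{lem:discreteorbits} so that isolatedness of the one point $x$ upgrades to discreteness of all of $\orb_x$, rather than checking each point of $\orb_x$ separately. Both are already established in the excerpt, so the proof is just a one-line citation of Proposition~\ref{p:locally.closed} together with the observation that $\{Z(\mu\setminus F)\}$ is a base.
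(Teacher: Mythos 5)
Your proposal is correct and follows exactly the route the paper intends: the paper itself gives no written proof beyond remarking that the statement is ``immediate'' from the equivalence of $T_0$ with discreteness of orbits (Proposition~\ref{p:locally.closed}), combined with the fact that the sets $Z(\mu\setminus F)$ form a base for $\partial\Lambda$. Your additional observations (that $x\in Z(\mu\setminus F)$ is forced, and that isolatedness of one point propagates to the whole orbit via Lemma~\ref{lem:discreteorbits}) are accurate, though the second is not strictly needed since condition~\eqref{condN} is quantified over all $x$ and hence already isolates every point of every orbit.
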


If we restrict our attention to row-finite higher-rank graphs without sources, one can show that condition ~\eqref{condN} is equivalent to saying for each infinite path $x$, there exists $n \in \mathbb{N}^k$ such that for $v:=r(\sigma^n(x))$,  the only paths from $v$ to $x$ are the paths on $x$ itself, stated more precisely, this means:
\[\text{if $\lambda \in \Lambda$ with $s(\lambda)=v$ and $r(\lambda)$ on $x$, then
 $\lambda=\sigma^n(x)(0, d(\lambda))$.}
\]
This condition looks more like Ephrem's GCR condition in~\cite[Theorem~7.3]{Eph04} and is, in fact, equivalent to his in the row-finite directed graph setting.  Combining our Proposition~\ref{p:condN}, Theorem~\ref{t:GCRthm} and Corollary~\ref{c:CCR.group} with~\cite[Theorem~7.1]{Cl07} we get our final result.

\begin{cor}\label{c:gcrgraph}
 Let $\Lambda$ be a countable finitely aligned higher-rank graph and $\Bbbk$ an uncountable algebraically closed field.  Then the following are equivalent.
 \begin{enumerate}
  \item condition \eqref{condN} is satisfied;
  \item KP$_{\Bbbk}(\Lambda)$ is GCR;
  \item $C^{\ast}(\Lambda)$ is GCR.
 \end{enumerate}
\end{cor}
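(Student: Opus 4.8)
The plan is to reduce both equivalences in the corollary to the single topological assertion that $\mathscr G^0_\Lambda/\mathscr G_\Lambda$ be $T_0$, since Proposition~\ref{p:condN} already translates that assertion into condition~\eqref{condN}. The first step is to pin down the isotropy groups of the boundary path groupoid: an element $\gamma\in G_x$ (an arrow $x\to x$) is recorded by a pair $(p,q)\in\mathbb N^k\times\mathbb N^k$ with $\sigma^p(x)=\sigma^q(x)$, and $\gamma\mapsto p-q$ embeds $G_x$ into $\mathbb Z^k$. Thus each $G_x$ is a countable abelian group, and since $\Bbbk$ is uncountable we have $|G_x|<|\Bbbk|$; Corollary~\ref{c:CCR.group} then gives that $\Bbbk G_x$ is CCR, hence in particular GCR, while abelianness of $G_x$ makes $C^*(G_x)$ CCR as well.

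Next I would handle (1)$\Leftrightarrow$(2). As $\Lambda$ is countable, $\mathscr G_\Lambda$ is a second-countable ample groupoid and $\mathrm{KP}_\Bbbk(\Lambda)\cong\Bbbk\mathscr G_\Lambda$ by~\cite{CP17}, so Theorem~\ref{t:GCRthm} applies: $\mathrm{KP}_\Bbbk(\Lambda)$ is GCR if and only if $\mathscr G^0_\Lambda/\mathscr G_\Lambda$ is $T_0$ and every $\Bbbk G_x$ is GCR. The isotropy clause holds automatically by the previous step, so GCR-ness of $\mathrm{KP}_\Bbbk(\Lambda)$ is equivalent to $\mathscr G^0_\Lambda/\mathscr G_\Lambda$ being $T_0$, which by Proposition~\ref{p:condN} is precisely~\eqref{condN}.

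For (1)$\Leftrightarrow$(3) I would argue in the same spirit, using the isomorphism $C^*(\Lambda)\cong C^*(\mathscr G_\Lambda)$ together with the groupoid $C^*$-algebra characterization~\cite[Theorem~7.1]{Cl07}, according to which $C^*(\mathscr G_\Lambda)$ is GCR if and only if $\mathscr G^0_\Lambda/\mathscr G_\Lambda$ is $T_0$ and each $C^*(G_x)$ is GCR. Once more the isotropy hypothesis holds automatically, so this collapses to $T_0$-ness of the orbit space, i.e.\ to~\eqref{condN}.

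I do not expect a genuine obstacle here: essentially all of the difficulty has been absorbed into Theorem~\ref{t:GCRthm}, Proposition~\ref{p:condN} and~\cite[Theorem~7.1]{Cl07}. The only points needing care are the explicit identification of $G_x$ with a subgroup of $\mathbb Z^k$ and the resulting observation that the isotropy hypotheses in both the algebraic and the $C^*$-algebraic characterizations are vacuous in this setting, leaving the $T_0$ condition on the orbit space as the sole nontrivial requirement.
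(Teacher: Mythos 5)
Your proposal is correct and follows essentially the same route as the paper: the authors likewise observe that the isotropy groups are subgroups of $\mathbb Z^k$, invoke Corollary~\ref{c:CCR.group} to dispose of the isotropy hypotheses, and then combine Proposition~\ref{p:condN} with Theorem~\ref{t:GCRthm} and \cite[Theorem~7.1]{Cl07} to reduce everything to the $T_0$ condition on the orbit space.
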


Corollary~\ref{c:gcrgraph} characterizes, in particular, all the GCR Leavitt path algebras.

\begin{ex} \label{ex:graph} If $\G$ is a discrete groupoid, then the orbit space is discrete and hence $T_1$.  Here we give an example of a boundary path groupoid that has non-discrete $T_1$ orbit space.    Let $E$ be the full rooted binary tree with edges directed as follows (using the so-called \emph{southern convention}):

\bigskip
  \[ \xymatrix{
&&&\bullet^v&&&  \\
& \bullet \ar[urr] &&&&  \bullet \ar[ull] &   \\
\bullet \ar[ur] & & \bullet \ar[ul] &&   \bullet \ar[ur] && \bullet \ar[ul] \\
\vdots && \vdots &&\vdots&&\vdots
}
\]

\bigskip

\noindent Then $E$ is a row-finite graph with no sources and the associated boundary path space is the infinite path space, that is, the space of all infinite sequences of edges $e_1e_2 \dots$ such that $s(e_i) = r(e_{i+1})$.  Here, each orbit contains a unique infinite path whose range is the root $v$ and one can show the orbit space is homeomorphic to the Cantor set which is compact and Hausdorff hence $T_1$.
\end{ex}

\subsection*{Acknowledgments}
We are grateful to the anonymous referees for their careful reading of the paper. In particular, one of the referees provided the arguments for Lemma~\ref{l:restrict.ideals} at its full level of generality and suggested the current version of Corollary~\ref{c:ideal.inherit}.

%
%
%
%
%
\end{document}